\theoremstyle{plain}
\newtheorem{thm}{Theorem}[section]
\newtheorem{lem}[thm]{Lemma}
\newtheorem{prop}[thm]{Proposition}
\newtheorem{cor}[thm]{Corollary}
\theoremstyle{definition}
\newtheorem{de}[thm]{Definition}
\newtheorem{exam}[thm]{Example}
\theoremstyle{remark}
\newtheorem{rem}[thm]{Remark}
\newtheorem{conj}{\bf Conjecture}
\numberwithin{equation}{section}
\def \N {\mathbb{N}}
\def \Z {\mathbb{Z}}
\def \O {\mathcal{O}}
\def \A {\mathcal A}
\def \X {\mathcal{X}}
\def \RP {{\bf RP}}
\def \id {{\rm id}}
\def \a {\alpha }
\def \b {\beta}
\def \ep {\epsilon}
\def \d {\delta}
\def \D {\Delta}
\def \c {\circ}
\def \w {\omega}
\def \ra {\rightarrow}
\begin{document}
\title{Multiple  recurrence without commutativity}

\author{Wen Huang}
\author{Song Shao}
\author{Xiangdong Ye}

\address{School of Mathematical Sciences, University of Science and Technology of China, Hefei, Anhui, 230026, P.R. China}

\email{wenh@mail.ustc.edu.cn}
\email{songshao@ustc.edu.cn}
\email{yexd@ustc.edu.cn}

\subjclass[2010]{Primary: 37B05; 54H20}
\keywords{Saturation theorems, integral polynomials, almost one to one extensions, pro-nilsystems}

\thanks{This research is supported by National Natural Science Foundation of China (12371196, 12031019, 12090012).}


\begin{abstract}
We study multiple  recurrence without commutativity in this paper. We show that for any two homeomorphisms $T,S: X\rightarrow X$ with $(X,T)$ and $(X,S)$ being minimal, there is a residual subset $X_0$ of $X$ such that for any $x\in X_0$ and any nonlinear integral polynomials
$p_1,\ldots, p_d$ vanishing at $0$, there is some subsequence $\{n_i\}$ of $\Z$ with $n_i\to \infty$ satisfying
$$  S^{n_i}x\to x,\ T^{p_1(n_i)}x\to x, \ldots,\ T^{p_d(n_i)}x\to x,\ i\to\infty.$$
\end{abstract}

\maketitle




\section{Introduction}

In this section we will provide the motivation of the research, state the main results and some open questions. In the paper an {\em integral polynomial} is a polynomial
taking integer values at the
integers. The polynomials $p(n)$ and $q(n)$ are {\em essentially distinct} if $p(n)-q(n)$ is not a constant function. By a {\em topological dynamical system} (t.d.s. for short) we mean a pair $(X,T)$, where $X$ is a compact metric space (with a metric $\rho$) and $T:X \to X$ is a homeomorphism.

\subsection{Motivation}\
\medskip

Recurrence is one of the most important properties for dynamical systems, since it has important applications in dynamical systems and other fields of mathematics (for example see \cite{F}). The well known Birkhoff theorem says that for any t.d.s. $(X,T)$ there exists some recurrent point, i.e. there are some point $x\in X$ and a subsequence $\{n_i\}$ of $\Z$ such that $n_i\to\infty$ and $T^{n_i}x\to x, i\to\infty$. The multiple Birkhoff recurrence theorem says that if $(X,T)$ is a t.d.s.
and $d\in \N$, then there exist some $x\in X$ and a subsequence $\{n_i\}$ of $\Z$ with $n_i\to \infty$ and $$T^{n_i}x\to x,T^{2n_i}x\to x,\ldots, T^{dn_i}x\to x, i\to\infty.$$
The multiple Birkhoff recurrence theorem can be deduced from the multiple recurrence theorem
of Furstenberg \cite{F77} which was proved by using deep ergodic tools. It is Furstenberg and Weiss \cite{FW} who
presented a topologically dynamical proof  of the theorem.
Also in \cite{FW}, the multiple Birkhoff recurrence theorem is generalized to commuting transformations: if $T_1,\ldots, T_d: X\rightarrow X$ are commuting homeomorphisms, then there is are some $x\in X$ and  a subsequence $\{n_i\}_{i=1}^\infty$ of $\Z$ with $n_i\to \infty$  such that
$$T_1^{n_i}x\to x, T_2^{n_i}x\to x, \ldots, T_d^{n_i}x\to x,\ i\to\infty.$$
Note that
there are counterexamples when $\langle T_1,\ldots,T_d\rangle$ (the group generated by $T_1,\ldots,T_d$) is solvable \cite{BL02, F}.

Leibman showed the following polynomial multiple Birkhoff recurrence theorem \cite{Lei}.
Let $X$ be a compact metric space, let $\Gamma$ be a nilpotent
group of its homeomorphisms, let $T_1,\ldots ,T_d\in \Gamma$, let $k\in \N$ and let $p_{i,j}$ be integral polynomials with $p_{i,j}(0)=0$, $i=1,2,\ldots,k; j=1,2,\ldots, d$. Then there exist $x\in X$ and
$\{n_m\}_{m=1}^\infty$ such that for each $i=1,\ldots,k$,
\begin{equation*}
  T_1^{p_{i,1}(n_m)}\cdots T_d^{p_{i,d}(n_m)}x\rightarrow x, \ m\to \infty.
\end{equation*}
The special case of this theorem corresponding to commutative $\Gamma$ is
proved by Bergelson and Leibman in \cite{BL96}.
Note that when $(X,\Gamma)$ is minimal, the set of multiple recurrence points is a dense $G_\d$ set of $X$ \cite{Lei}.

\medskip

Recently, Host and Frantzikinakis studied multiple recurrence and convergence without commutativity in ergodic theory \cite{FranHost21}. They proved that
if $T$ and $S$ are ergodic measure preserving transformations on a probability space $(X,\X,\mu)$ and $T$ has zero entropy, then
for any integral polynomial $p$ with $\deg {p}\ge 2$, one has
$$\lim_{N\to\infty} \frac{1}{N}\sum_{n=0}^{N-1}f(T^{n}x)g(S^{p(n)}x),$$
exists in $L^2(X,\mu)$.

Note that there are examples such that if $\deg p, \deg {q}\ge 5$ and both $T$ and $S$ have zero entropy,
$\lim_{N\to\infty} \frac{1}{N}\sum_{n=0}^{N-1}f(T^{p(n)}x)g(S^{q(n)}x)$ does not exist, see \cite{HSY22-3}. And for $\deg p=\deg q=1$, see \cite{Tim}, \cite{HSY2024} and \cite{R}. Note that a very recent result by Kosloff and Sanadhya \cite{Kosloff} complete the picture by giving
examples for the rest cases, namely $\deg p, \deg {q}\ge 2.$


Huang, Shao and Ye \cite{HSY-new} extended the result of Host and Frantzikinakis by showing that
for all $c_i\in \Z\setminus \{0\}$, all integral polynomials $p_j$ with $\deg {p_j}\ge 2$, and for all $f_i, g_j\in L^\infty(X,\mu)$, $1\le i\le m$ and $1\le j\le d$,
$$\lim_{N\to\infty} \frac{1}{N}\sum_{n=0}^{N-1}f_1(T^{c_1n}x)\cdots f_m(T^{c_mn}x)\cdot g_1(S^{p_1(n)}x)\cdots g_d(S^{p_d(n)}x),$$
exists in $L^2(X,\mu)$.

\medskip
Thus, a natural question arises: what happens in the realm of topological dynamics? This paper will focus on addressing this question.

\medskip
In the study of the measure theoretical case, the disjointness of a $K$-system with any zero entropy ergodic system plays an essential role. Here
in the topological setting we will use the following disjoint theorem due to Furstenberg in \cite{F67}: If $(X,T)$ is a totally transitive t.d.s. and the set
of periodic points is dense in $X$, then $(X,T)$ is disjoint from any minimal t.d.s.




\subsection{Main results}\
\medskip

Surprisedly, in the topological setting we do not need to assume one the system has zero entropy. The deep reason can be explained as follows:
the Bernoulli shift is disjoint with any minimal system, and on the contrary in the ergodic case, no non-trivial egodic system is disjoint from any ergodic system.
\begin{thm}\label{thm-main1-1}
Let $(X,T)$ be a minimal t.d.s. and $p_1,\ldots, p_d$ be a family of {\bf nonlinear integral polynomials} vanishing at $0$.
Then there is a residual subset $X_0$ of $X$ such that for each $x\in X_0$ and any minimal point $z\in Z$, where $(Z,S)$ is any t.d.s., there is some sequence $n_i\to\infty, i\to\infty$ such that
\begin{equation}\label{int1-1}
  S^{n_i}z\to z,\ T^{p_1(n_i)}x\to x, \ldots,\ T^{p_d(n_i)}x\to\infty,\ i\to\infty.
\end{equation}
\end{thm}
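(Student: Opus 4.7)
The plan is to combine the polynomial multiple Birkhoff recurrence theorem of Leibman (cited in the introduction) with Furstenberg's topological disjointness theorem recalled above.

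\emph{Baire category setup.} A standard Baire category argument reduces the assertion to the following: for every nonempty open $V\subseteq X$, every minimal t.d.s.\ $(Z,S)$, every minimal point $z\in Z$, and every neighborhood $U$ of $z$, the set
\[
\{x\in X:\exists\,n\neq 0 \text{ with } T^{p_j(n)}x\in V \text{ for all } j \text{ and } S^nz\in U\}
\]
is open and dense in $X$. A countable intersection over a basis of neighborhoods will yield a residual subset working for a given $(Z,S,z)$; to obtain a single $X_0$ that works for \emph{all} minimal triples $(Z,S,z)$, I plan to replace $(Z,S,z)$ by a fixed universal minimal $\Z$-system (every minimal t.d.s.\ arising as a factor thereof), thereby collapsing the uncountable family of conditions to a countable one.

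\emph{Skew-product lift.} I will realize the sequence $n\mapsto(T^{p_1(n)}x,\dots,T^{p_d(n)}x)$ as a single $\Z$-orbit. Iterated Furstenberg-type skew products over $(X,T)$ produce a compact t.d.s.\ $(\widehat X,\widehat T)$ and a lift $\hat x$ such that the projection of $\widehat T^n\hat x$ to prescribed coordinates reproduces $(T^{p_j(n)}x)_j$. Leibman's polynomial multiple recurrence theorem applied to $(\widehat X,\widehat T)$ then yields, for $x$ in a residual subset of $X$, a sequence $n_i\to\infty$ with $\widehat T^{n_i}\hat x\to\hat x$; this already gives $T^{p_j(n_i)}x\to x$, but not yet the joint $S^{n_i}z\to z$.

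\emph{Disjointness and the main obstacle.} To merge with $S^{n_i}z\to z$, I will invoke Furstenberg's disjointness theorem. The nonlinearity of the polynomials will be used to locate an auxiliary t.d.s.\ $(W,R)$, built from the polynomial data of $p_1,\dots,p_d$ rather than as a factor of the minimal $(\widehat X,\widehat T)$, that is totally transitive with dense periodic points: dense periodicity will come from the periodic residues $(p_j(n)\bmod m)_n$, while total transitivity of every $R^k$ will follow from a Leibman-type joint equidistribution of nonlinear polynomial orbits with arbitrary linear orbits on nilmanifolds. Furstenberg's theorem then gives $(W,R)\perp(Z,S)$, and coupling this with the orbit analysis of $\hat x$ in $(\widehat X,\widehat T)$ should force the existence of $n_i$ along which $S^{n_i}z\to z$ and $\widehat T^{n_i}\hat x\to\hat x$ simultaneously; projecting down to $X$ then yields the theorem. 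The \textbf{main obstacle} is precisely this last step: constructing the disjointness-ready system $(W,R)$ and coupling it cleanly with the orbit of $\hat x$ so that $(W,R)\perp(Z,S)$ actually translates into joint recurrence of $(\hat x,z)$ under $\widehat T\times S$. This is where the pro-nilsystem and almost one-to-one extension machinery flagged by the paper's keywords will play a decisive role, via Leibman's equidistribution theory for polynomial orbits in nilmanifolds together with a saturation argument lifting recurrence from the maximal pro-nilfactor back to $X$.
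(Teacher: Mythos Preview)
Your proposal correctly identifies the architecture --- realize the polynomial orbit as a single $\Z$-orbit, then use Furstenberg's disjointness against the minimal $(Z,S)$ --- but the heart of the argument, your construction of the auxiliary system $(W,R)$, is where the plan breaks down.

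First, a minor point: the universal minimal system detour is both unnecessary and technically awkward (that system is not metrizable, so Baire category on it is delicate). In the paper the residual set $X_\bullet$ is defined \emph{before} any $(Z,S,z)$ enters: it is cut out by conditions depending only on $x$ (a saturation property of the orbit closure $W_x^\A$ and distality of the projected orbit in the pro-nilfactor). Once $x\in X_\bullet$, the disjointness argument runs uniformly for every minimal $(Z,S)$ and every $z$.

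The real gap is in your $(W,R)$. You propose to build it ``from the polynomial data'' and obtain dense periodic points from residues $(p_j(n)\bmod m)_n$ and total transitivity from Leibman-type equidistribution. But neither ingredient produces a concrete system that (a) is totally transitive with dense periodic points and (b) is coupled to the orbit of $\hat x$ in such a way that disjointness from $(Z,S)$ forces joint recurrence. The paper resolves this very differently: after passing to a zero-dimensional almost one-to-one extension and an $O$-diagram making $\pi^*:X^*\to Y^*$ open (with $Y^*$ almost one-to-one over $X_\infty$), one codes $x_0^*$ via a clopen partition $\{B_1^*,\dots,B_k^*\}$ refining the $\pi^*$-fibers. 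The resulting code $\pmb{\xi}$ lives in the \emph{full shift} $\Sigma_k=(\{1,\dots,k\}^d)^\Z$, and the decisive step is showing that $\pmb{\xi}$ is a \emph{transitive point} of $\Sigma_k$. This is exactly where nonlinearity enters, through the saturation theorem: for $x$ in a residual set, the orbit closure $W_x^\A$ is $(\pi^*)^\infty$-saturated, i.e.\ contains full $\pi^*$-fibers over $W_{y^*}^\A$; since each $B_i^*$ meets every fiber, every finite block in $\Sigma_k$ is hit by the code. The full shift is then your $(W,R)$ --- trivially totally transitive with dense periodic points --- and Furstenberg's theorem gives $\Sigma_k\perp(Z,S)$; combined with distality of $\omega_{y_0^*}^\A$ (so that $(z,\omega_{y_0^*}^\A)$ is minimal), this yields the joint recurrence. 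Your plan contains no analogue of the saturation theorem, and without it there is no reason the coding of the polynomial orbit should fill out a system disjoint from all minimal systems.
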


The notion of product recurrence was introduced by Furstenberg in \cite{F}. Let $(X,T)$ be a t.d.s. A point $x\in X$ is said to be
{\em product recurrent} if given any recurrent point $y$ in any t.d.s. $(Y,S)$, $(x,y)$ is recurrent in the product system
$(X\times Y, T\times S)$. By associating product recurrence with a combinatorial property on the sets of return times (i.e. $x$ is
product recurrent if and only if it is $IP^*$ recurrent), Furstenberg  proved that product recurrence is equivalent to
distality \cite[Theorem 9.11]{F}. In \cite{AF}, Auslander and Furstenberg posed a question: if $(x,
y)$ is recurrent for all minimal points $y$, is $x$ necessarily a distal point? This question is answered in the negative in
\cite{HO}. Such $x$ is called a {\em weakly product recurrent} point there.

\medskip
We remark that Theorem \ref{thm-main1-1} is equivalent to
\begin{thm}\label{thm-main1-2}
Let $(X,T)$ be a minimal t.d.s. and $\A=\{p_1,\ldots,p_d\}$  be a family of nonlinear integral polynomials
vanishing at $0$.
Then there is a residual subset $X_0$ of $X$ such that for each $x\in X_0$, $\w_x^\A\in (X^d)^\Z$ is a weak product recurrent point, where
$$\w_x^\A=\big ((T^{p_1(n)}x, T^{p_2(n)}x,\ldots, T^{p_d(n)}x) \big)_{n\in \Z}  \in (X^d)^{{\Z}}$$
and the t.d.s. is $(\overline{\O}(\w_x^\A,\sigma), \sigma)$ with the shift $\sigma: (X^d)^{{\Z}}\rightarrow (X^d)^{{\Z}}$.
\end{thm}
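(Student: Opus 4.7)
The plan is to reduce Theorem~\ref{thm-main1-2} to Theorem~\ref{thm-main1-1}, since the converse direction is immediate: if $\sigma^{n_i}\w_x^\A \to \w_x^\A$ in $(X^d)^\Z$, then restricting to the $0$th coordinate yields $T^{p_j(n_i)}x\to x$ for each $j$ (using $p_j(0)=0$ to identify $\w_x^\A(0)=(x,\ldots,x)$). So the only question is how to upgrade the pointwise recurrence of Theorem~\ref{thm-main1-1} at $x$ to shift-recurrence of the whole sequence $\w_x^\A$ in the product topology.

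The device I would use is to enlarge the polynomial family by time-shifts. For each $M\ge 1$, set
\[
\A_M \;=\; \bigl\{\, q_{j,m}(n):=p_j(m+n)-p_j(m) \,:\, 1\le j\le d,\ |m|\le M\,\bigr\}.
\]
Each $q_{j,m}$ vanishes at $0$ and has the same degree as $p_j$, hence is nonlinear. Applying Theorem~\ref{thm-main1-1} to the family $\A_M$ yields a residual $X_{0,M}\subset X$; taking $X_0=\bigcap_{M\ge 1} X_{0,M}$ is still residual by Baire. Given $x\in X_0$, an arbitrary t.d.s.\ $(Y,S)$, and a minimal point $y\in Y$, I would then use Theorem~\ref{thm-main1-1} for each $\A_M$ to choose $N_M$ with $N_M\ge M$, with $S^{N_M}y$ within $1/M$ of $y$, and with $T^{q_{j,m}(N_M)}x$ within $1/M$ of $x$ for all $|m|\le M$ and $1\le j\le d$.

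To finish, I invoke the identity
\[
T^{p_j(m+N_M)}x \;=\; T^{p_j(m)}\bigl(T^{q_{j,m}(N_M)}x\bigr),
\]
which holds because $T$ commutes with its own powers; combined with continuity of $T^{p_j(m)}$ and $T^{q_{j,m}(N_M)}x\to x$, it gives $T^{p_j(m+N_M)}x\to T^{p_j(m)}x$ for every fixed $m$ and $j$. In the product topology on $(X^d)^\Z$ this is precisely $\sigma^{N_M}\w_x^\A\to \w_x^\A$, and together with $S^{N_M}y\to y$ we conclude that $(\w_x^\A,y)$ is recurrent under $\sigma\times S$, which is the definition of $\w_x^\A$ being weakly product recurrent in $(\overline{\O}(\w_x^\A,\sigma),\sigma)$.

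The real mathematical content sits in Theorem~\ref{thm-main1-1}; once that theorem is granted, the present reduction is essentially cosmetic. The only genuine things to notice are that nonlinearity and vanishing at $0$ both survive the substitution $n\mapsto m+n$, and that the residual set furnished by Theorem~\ref{thm-main1-1} is \emph{uniform} in the auxiliary system $(Y,S)$ (as its statement makes clear), so that a single $X_0$ works simultaneously for all minimal points in all t.d.s.
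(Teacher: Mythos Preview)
Your reduction is correct and is exactly the kind of argument the paper has in mind when it asserts (without proof) that Theorems~\ref{thm-main1-1} and~\ref{thm-main1-2} are equivalent. The polynomial-shift device $q_{j,m}(n)=p_j(m+n)-p_j(m)$, together with continuity of each fixed power $T^{p_j(m)}$, is precisely what the paper itself deploys in Subsection~\ref{subsect-7.3} (Step~2 and Step~4) to pass from recurrence at the $0$th coordinate to recurrence in a whole window $[-l,l]$; you are simply applying that trick one level up, to deduce~\ref{thm-main1-2} from~\ref{thm-main1-1} rather than inside the proof of Theorem~\ref{thm-main1}. Your observations that each $q_{j,m}$ remains nonlinear with $q_{j,m}(0)=0$, and that the residual set in Theorem~\ref{thm-main1-1} is uniform over all auxiliary minimal systems, are exactly the two points needed to make the countable intersection $X_0=\bigcap_M X_{0,M}$ work.
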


By taking $(Z,S)=(X,T)$ and $z=x$ in Theorem \ref{thm-main1-1} we obtain

\begin{cor}
Let  $(X,T)$ and $(X,S)$ be minimal. Then there is a residual subset $X_0$ of $X$ such that for any $x\in X_0$ and any nonlinear integral polynomials $p_1,\ldots, p_d$ vanishing at $0$, 
there is some sequence $n_i\to\infty, i\to \infty$ such that
\begin{equation}\label{int1-2}
  S^{n_i}x\to x,\ T^{p_1(n_i)}x\to x, \ldots,\ T^{p_d(n_i)}x\to x,\ i\to\infty.
\end{equation}
\end{cor}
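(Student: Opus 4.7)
The plan is to reduce the corollary directly to Theorem \ref{thm-main1-1} by upgrading the quantification so that a single residual set serves every tuple of polynomials simultaneously. The key point is that Theorem \ref{thm-main1-1} delivers a residual set which a priori depends on the chosen polynomial tuple, whereas the corollary requires a residual set valid for all tuples at once.

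First I would record the countability fact: every integral polynomial has rational coefficients (equivalently, lies in the $\Z$-span of the binomial basis $\{\binom{n}{k}\}_{k\ge 0}$), so the collection of finite tuples $\A=(p_1,\ldots,p_d)$ of nonlinear integral polynomials vanishing at $0$ is countable. For each such $\A$, Theorem \ref{thm-main1-1} applied to the minimal system $(X,T)$ produces a residual subset $X_0^{\A}\subseteq X$. Since $X$ is a compact metric space, the countable intersection
$$X_0:=\bigcap_{\A} X_0^{\A}$$
is again residual in $X$, and this is the set I would declare to satisfy the corollary.

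Next, given $x\in X_0$ and any admissible tuple $\A=(p_1,\ldots,p_d)$, I would invoke Theorem \ref{thm-main1-1} with the second given minimal system in the role of $(Z,S)$. Because $(X,S)$ is minimal, every point of $X$ is a minimal point of $(X,S)$; in particular $z:=x$ qualifies. Theorem \ref{thm-main1-1} then furnishes a sequence $n_i\to\infty$ with
$$S^{n_i}x\to x,\ T^{p_1(n_i)}x\to x,\ \ldots,\ T^{p_d(n_i)}x\to x,$$
which is exactly the desired conclusion.

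I do not expect any genuine obstacle here: the only mild subtlety is arranging the uniformity of $X_0$ across all polynomial tuples, and this is dispatched by the countable-intersection argument above. The substantive content is encapsulated in Theorem \ref{thm-main1-1}, and the corollary is essentially a specialization of it to the case where the auxiliary minimal system is carried on the same underlying space $X$.
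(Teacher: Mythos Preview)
Your proposal is correct and follows essentially the same route as the paper: derive the corollary from Theorem~\ref{thm-main1-1} by taking the auxiliary minimal system to be $(X,S)$ and the minimal point to be $z=x$ (the paper's line ``taking $(Z,S)=(X,T)$'' is evidently a typo for $(X,S)$). Your countable-intersection step to make $X_0$ uniform over all polynomial tuples is a small extra care that the paper does not spell out explicitly; it is entirely sound and costs nothing.
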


\medskip

It is clear that (\ref{int1-2}) is  equivalent to the following: for any nonempty open set $U$ of $X$, there is $n\in \N$ with
$$U\cap S^{-n}U\cap T^{-p_1(n)}U\cap \cdots \cap T^{-p_d(n)}U\not=\emptyset.$$
Thus, there is a residual subset $X_0$ is equivalent to there is a dense subset $X_0$ in the statement of the above corollary.

\medskip

Let $(X,T)$ be a minimal system and $k\in \N$. We say that $(X,T)$ is {\it multiply $k$-minimal}, if there is a dense $G_\delta$ set $\Omega$ of $X$ such that
for each $x\in \Omega$, $(x,\ldots,x)$ is minimal under $T\times T^2\times \cdots\times T^k$.
We have the following remarks:

\begin{enumerate}

\item It is clear that if $(X,T)$ is distal, then it is multiply $k$-minimal for any $k\in\N$. In fact, for distal systems $(X,T)$, for each $x\in \Omega$ and $k\in \N$, $(x,\ldots,x)$ is minimal under $T\times T^2\times \cdots\times T^k$ \cite{F81}.

\item There are minimal weakly mixing systems \cite{FKS73} and even some minimal open proximal extensions of equicontinuous systems \cite{LOU}, which are multiply $k$-minimal for some $k\ge 2$.

\item Assume that $\pi:(X,T)\to (Y,S)$ is a factor map between two minimal systems and $(Y,S)$ is multiply $k$-minimal for some $k\in\N$. If $\pi$ is distal then one can show that $(X,T)$ is also multiply $k$-minimal.

\item If there exists a multiply minimal point in any t.d.s. was a question asked by Furstenberg in \cite{F81}, and was systematically studied by Huang, Shao and Ye in \cite{HSY-21}. Particularly, it was shown that there is a minimal weakly mixing t.d.s. which has no multiply minimal point \cite{HSY-21}. It is an open question if such an example exists for PI-systems.
\end{enumerate}

\medskip

By taking $X_0$ to be the intersection of the residual set in Theorem \ref{thm-main1-2} and the residual set in the definition of multiply $k$-minimality we get

\begin{cor}
Let $(X,T)$, $(X,S)$ be minimal, and let $(X,S)$ be multiply $k$-minimal for some $k\in\N$. Then there is a residual subset $X_0$ of $X$ such that for any $x\in X_0$ and any finite nonlinear integer polynomials
$p_1,\ldots, p_d$ with $p_1(0)=\cdots =p_d(0)=0$, there is some sequence $n_i\to \infty, i\to\infty$ such that
\begin{equation}\label{}
  S^{n_i}x\to x,\  \ldots, S^{kn_i}x\to x, \ T^{p_1(n_i)}x\to x, \ldots,\ T^{p_d(n_i)}x\to x,\ i\to\infty.
\end{equation}
\end{cor}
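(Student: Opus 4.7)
The proof is a direct amalgamation of Theorem \ref{thm-main1-1} with the definition of multiple $k$-minimality, as foreshadowed in the paragraph introducing the corollary. The plan is to take $X_0$ as an intersection of residual sets and then feed an auxiliary product system into Theorem \ref{thm-main1-1}.

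First I would enumerate the countable collection of finite families $\A=\{p_1,\ldots,p_d\}$ of nonlinear integral polynomials vanishing at $0$; countability holds because each such polynomial is determined by finitely many integer coefficients. For each such $\A$ let $X_1^{\A}\subseteq X$ be the residual set supplied by Theorem \ref{thm-main1-1}, and set $X_1=\bigcap_{\A}X_1^{\A}$, which is still residual by the Baire category theorem. Next, using the assumption that $(X,S)$ is multiply $k$-minimal, let $X_2\subseteq X$ be the dense $G_\delta$ set $\Omega$ on which the diagonal point $(x,\ldots,x)\in X^k$ is minimal for $\tilde S:=S\times S^2\times\cdots\times S^k$. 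Put $X_0:=X_1\cap X_2$; this is residual in $X$.

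Now fix $x\in X_0$ and any family $\A=\{p_1,\ldots,p_d\}$ as above. I apply Theorem \ref{thm-main1-1} to $(X,T)$ with the auxiliary t.d.s.\ taken to be $(Z,\tilde S):=(X^k,S\times S^2\times\cdots\times S^k)$ and with the point $z:=(x,x,\ldots,x)\in Z$; this $z$ is a minimal point of $(Z,\tilde S)$ precisely because $x\in X_2$. Since $x\in X_1\subseteq X_1^{\A}$, the theorem yields a sequence $n_i\to\infty$ with
\[
\tilde S^{n_i}z\to z \quad\text{and}\quad T^{p_j(n_i)}x\to x \ \text{for each } 1\le j\le d.
\]
Reading $\tilde S^{n_i}z\to z$ coordinate-wise gives $S^{jn_i}x\to x$ for every $1\le j\le k$, which combined with the polynomial convergences is exactly the desired conclusion.

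The argument is essentially bookkeeping once the two ingredients are in place; the only conceptual point is the choice of $\tilde S$, which packages the $k$-fold $S$-recurrence at $x$ into minimality of a single point in a single auxiliary system, so that a common sequence $\{n_i\}$ witnesses recurrence under $S,S^2,\ldots,S^k$ simultaneously with the polynomial $T$-recurrences. I do not anticipate any substantive obstacle beyond Theorem \ref{thm-main1-1} itself.
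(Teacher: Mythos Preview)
Your proposal is correct and matches the paper's own argument: the paper simply says ``By taking $X_0$ to be the intersection of the residual set in Theorem \ref{thm-main1-2} and the residual set in the definition of multiply $k$-minimality we get'' the corollary. Your use of Theorem \ref{thm-main1-1} in place of the equivalent Theorem \ref{thm-main1-2}, with the auxiliary system $(Z,\tilde S)=(X^k,S\times S^2\times\cdots\times S^k)$ and $z=(x,\ldots,x)$, is exactly the intended unpacking, and your countable intersection over families $\A$ makes explicit a step the paper leaves implicit.
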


We remark that the role of multiple $k$-minimality is the replacement of zero entropy in the ergodic case, as when two topological systems are disjoint, one of them should be minimal.

\subsection{More about main results}\
\medskip

A subset $S$ of $\Z$ is {\it syndetic} if it has a bounded gap,
i.e. there is $N\in \N$ such that $\{i,i+1,\cdots,i+N\} \cap S \neq
\emptyset$ for every $i \in {\Z}$. $S$ is {\it thick} if it
contains arbitrarily long runs of integers. A subset $S$ of $\Z$ is {\em piecewise syndetic} if it is an
intersection of a syndetic set with a thick set.

In fact, we will show a more general result than Theorem \ref{thm-main1-1}:

\begin{thm}\label{thm-main1}
Let $(X,T)$ be a minimal t.d.s. and $\A=\{p_1,\ldots, p_d\}$ be a family of nonlinear integer polynomials with $p_1(0)=\cdots =p_d(0)=0$. Then there is a residual subset $X_\bullet$ of $X$ such that for each $x_0\in X_\bullet$, any minimal point $z\in Z$, where $(Z,S)$ is any t.d.s.,
and any each $\ep>0$,
\begin{equation*}\label{}
 N_{z,x_0,\ep}=\{n\in \Z: \rho_Z(S^n z,z)<\ep \ \text{and}\ \rho_{X}(T^{p_1(n)}x_0, x_0)<\ep,\ \ldots,\ \rho_{X}(T^{p_d(n)}x_0, x_0)<\ep \}
\end{equation*}
is a piecewise syndetic subset.

In particular,
there is some sequence $n_i\to \infty, i\to\infty$ such that
\begin{equation}\label{}
  S^{n_i}z\to z,\ T^{p_1(n_i)}x\to x, \ldots,\ T^{p_d(n_i)}x\to x,\ i\to\infty.
\end{equation}
\end{thm}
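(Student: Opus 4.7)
The plan is to reformulate the statement in terms of an orbit closure in a shift system and exploit Furstenberg's disjointness theorem cited in the introduction. For each $x_0\in X$, I associate the symbolic point
\[
\w_{x_0}^\A \;=\; \bigl((T^{p_1(n)}x_0,\ldots,T^{p_d(n)}x_0)\bigr)_{n\in\Z}\in (X^d)^{\Z},
\]
let $W_{x_0}=\overline{\O}(\w_{x_0}^\A,\sigma)$ be its orbit closure under the shift $\sigma$, and, given a minimal point $z$ of $(Z,S)$, set $Y=\overline{\O}(z,S)$, which is minimal. Projecting to the $0$th coordinate of the $(X^d)^{\Z}$-factor shows the theorem is equivalent to the assertion that, for $x_0$ in a residual set $X_\bullet$, the joint point $(\w_{x_0}^\A,z)$ has piecewise syndetic return times in $(W_{x_0}\times Y,\sigma\times S)$ to every product neighborhood of itself.

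The central step will be to show that, for $x_0\in X_\bullet$, the system $(W_{x_0},\sigma)$ is totally transitive and has a dense set of periodic points, so that Furstenberg's disjointness theorem makes it disjoint from every minimal t.d.s. The nonlinearity of the $p_i$'s is what should drive both properties: the unboundedly growing higher-order differences of $p_i$ produce enough combinatorial independence in $\w_{x_0}^\A$ (via polynomial Weyl equidistribution together with the minimality of $(X,T)$) to build periodic shift-approximants, and they prevent any periodic obstruction to transitivity under the iterates $\sigma^k$. Once disjointness is in hand, the facts that $\w_{x_0}^\A$ and $z$ are transitive in their respective orbit closures force the orbit closure of $(\w_{x_0}^\A,z)$ in $W_{x_0}\times Y$ to equal the entire product. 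Choosing a periodic point $p\in W_{x_0}$ of period $k$ very close to $\w_{x_0}^\A$, the set $\bigcup_{i=0}^{k-1}\{\sigma^i p\}\times S^i\overline{\O}(z,S^k)$ is a $(\sigma\times S)$-invariant subset whose minimal subsystems lie in any prescribed neighborhood of $(\w_{x_0}^\A,z)$. A standard continuity argument then converts the combination of transitivity of the joint point with the presence of a minimal subsystem in every neighborhood of it into piecewise syndetic return times, which is what is needed.

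The hard part will be the structural step of producing dense periodic points and total transitivity of $(W_{x_0},\sigma)$. Already when $(X,T)$ is an irrational circle rotation and $p_1(n)=n^2$, one computes that $W_{x_0}$ is a $2$-step nilsystem without any periodic points at all, so the naive version of this disjointness approach must be refined. In that regime I expect one must work through the maximal pro-nilfactor of $(X,T)$ using the structure theory indicated by the paper's keywords (pro-nilsystems, almost one-to-one extensions, saturation theorems): pass to the nilfactor and use Leibman's polynomial recurrence to pin down a generic multiply recurrent orbit, invoke the distal-with-minimal joining theorem for the nilsystem part, and then lift back to $(X,T)$ via an almost one-to-one extension so that Furstenberg's disjointness enters at the level of the ``symbolic fibres'' of $W_{x_0}$ over its maximal distal factor. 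Combining the syndetic return times coming from the distal part with the thick sets coming from disjointness on the fibres should yield the piecewise syndetic $N_{z,x_0,\ep}$.
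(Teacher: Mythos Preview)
Your final paragraph is in the right direction and matches the paper's overall architecture: split off the pro-nilfactor, handle it by product recurrence of distal points, and apply Furstenberg disjointness to the remaining ``fibre'' part. You also correctly diagnose why the naive plan fails (your circle-rotation example is exactly the obstruction). What is missing is the concrete mechanism that makes ``Furstenberg's disjointness enters at the level of the symbolic fibres'' into an actual argument: you never identify a system with dense periodic points to which the disjointness theorem can be applied.

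The paper does not try to show that $(W_{x_0},\sigma)$ or its fibres have dense periodic points. Instead, after passing to an almost one-to-one zero-dimensional extension and then to an open extension $\pi^*:X^*\to Y^*$ (with $Y^*$ almost one-to-one over $X_\infty$), it invokes the \emph{saturation theorem} (Theorem~\ref{thm-poly-saturate}): for residually many $x$, the orbit closure $W_x^\A$ is $(\pi^*)^\infty$-saturated, i.e.\ it is the full preimage of its projection to $((Y^*)^d)^\Z$. One then constructs, for each $\ep>0$, a clopen partition $\{B_1^*,\ldots,B_k^*\}$ of $X^*$ with the key property that every $\pi^*$-fibre meets every $B_i^*$. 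Coding $\w_{x_0^*}^\A$ through this partition produces a point $\pmb\xi$ in the \emph{full shift} $\Sigma_k=(\{1,\ldots,k\}^d)^\Z$, and saturation is exactly what forces $\pmb\xi$ to be a transitive point of $\Sigma_k$. It is $\Sigma_k$ --- not $W_{x_0}$ --- that plays the role of the totally transitive system with dense periodic points. On the $Y^*$-side, $\w_{y_0^*}^\A$ is distal (Lemma~\ref{lem7}), so $(z,\w_{y_0^*}^\A)$ is minimal; disjointness then gives
\[
\overline{\O}\bigl((z,\w_{y_0^*}^\A,\pmb\xi),\,S\times\sigma\times\sigma\bigr)=\overline{\O}\bigl((z,\w_{y_0^*}^\A)\bigr)\times\Sigma_k,
\]
which is an $M$-system, so return times are piecewise syndetic. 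Decoding through the partition (whose pieces have diameter $<\ep$ over a suitable base set) recovers $N_{z,x_0,\ep}$.

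So the gap is that your sketch stops short of naming the saturation theorem and the coding-to-a-full-shift device; without them there is no candidate for the system disjoint from all minimal systems, and the ``symbolic fibres'' idea does not close. The zero-dimensional almost one-to-one extension (Theorem~\ref{thm-0-dim-relative}) is also essential, since it is what allows the clopen partition with the required fibre-hitting property to be built.
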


In Theorem \ref{thm-main1}, if we take $(Z,S)=(X,T)$ and $z=x$, then we have the following corollary.

\begin{cor}
Let $T,S: X\rightarrow X$ be two homeomorphisms such that $(X,T)$ and $(X,S)$ are minimal. Then there is a residual subset $X_0$ of $X$ such that for any $x\in X_0$, any finite nonlinear polynomials
$p_1,\ldots, p_d$ with $p_1(0)=\cdots=p_d(0)=0$,
and any each $\ep>0$,
\begin{equation*}\label{}
 \{n\in \Z: \rho_Z(S^n x_0, x_0)<\ep \ \text{and}\ \rho_{X}(T^{p_1(n)}x_0, x_0)<\ep,\ \ldots,\ \rho_{X}(T^{p_d(n)}x_0, x_0)<\ep \}
\end{equation*}
is a piecewise syndetic subset.
\end{cor}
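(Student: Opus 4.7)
The plan is to deduce the corollary as a diagonalization of Theorem \ref{thm-main1}, with only a countability remark needed to secure a single residual set $X_0$ that serves all admissible polynomial families simultaneously.

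First I would observe that every integer-valued polynomial on $\Z$ is a finite integer combination of the binomials $\binom{n}{k}$; hence the set of integral polynomials is countable, and so is the collection of finite families $\A=\{p_1,\ldots,p_d\}$ of nonlinear integral polynomials with $p_i(0)=0$. For each such family $\A$, Theorem \ref{thm-main1}, applied to the minimal system $(X,T)$, yields a residual subset $X_\bullet(\A)\subseteq X$. I would set
$$X_0=\bigcap_{\A} X_\bullet(\A),$$
the intersection being taken over all such families. By the Baire category theorem in the complete metric space $X$, this countable intersection of dense $G_\d$ sets is again residual.

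Next, I would fix $x_0\in X_0$, a family $\A=\{p_1,\ldots,p_d\}$, and $\ep>0$. Since $(X,S)$ is minimal, every point of $X$ is a minimal point for $(X,S)$; in particular so is $x_0$. Applying Theorem \ref{thm-main1} to $x_0\in X_\bullet(\A)$ with the auxiliary data $(Z,S):=(X,S)$ and auxiliary minimal point $z:=x_0$ yields at once that
$$\{n\in\Z:\rho(S^n x_0,x_0)<\ep,\ \rho(T^{p_1(n)}x_0,x_0)<\ep,\ \ldots,\ \rho(T^{p_d(n)}x_0,x_0)<\ep\}$$
is piecewise syndetic, which is exactly the desired conclusion.

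There is no real obstacle in this argument: the corollary is simply the diagonal case $(Z,S,z)=(X,S,x_0)$ of Theorem \ref{thm-main1}, and the countability of the family of admissible $\A$ is what allows one to extract a single uniform residual set $X_0$ rather than one depending on the polynomials. All substantive work has already been carried out upstream in Theorem \ref{thm-main1}, whose proof presumably rests on Furstenberg's disjointness theorem for totally transitive systems with dense periodic points, combined with the polynomial multiple Birkhoff recurrence machinery of Bergelson--Leibman and Leibman for nilpotent group actions.
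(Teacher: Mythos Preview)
Your proof is correct and follows the same approach as the paper: the corollary is exactly the diagonal case $(Z,S,z)=(X,S,x_0)$ of Theorem \ref{thm-main1}. Your observation about countability of integral polynomials and the intersection $X_0=\bigcap_\A X_\bullet(\A)$ is a detail the paper leaves implicit but which is indeed needed to pass from a residual set depending on $\A$ (as Theorem \ref{thm-main1} is stated) to a single residual set valid for all admissible families at once.
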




Using Theorem \ref{thm-main1}, we can recover some results proved in \cite{HSY22-1}. That is, we have the following immediately:

\begin{cor}[{\cite[Theorem C]{HSY22-1}}]\label{coro-ps}
Let $(X,T)$ be a minimal t.d.s. and $d\in \N$.  Let $p_{i}$ be an integral polynomial with $p_{i}(0)=0$, $1\le i\le d$. If $\deg (p_i)\ge 2, 1\le i \le d$,
then there is a dense $G_\delta$ subset $X_0$ such that for each $x\in X_0$
and each neighbourhood $U$ of $x$
$$N_{\{p_1,\ldots,p_d\}}(x,U)=\{n\in\Z: T^{p_1(n)}x\in U, \ldots, T^{p_d(n)}x\in U\}$$
is piecewise syndetic.
\end{cor}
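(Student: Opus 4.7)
The plan is to derive Corollary \ref{coro-ps} as a direct specialization of Theorem \ref{thm-main1}. The key observation is that Theorem \ref{thm-main1} allows us to choose the auxiliary system $(Z,S)$ freely, so I will take $(Z,S)$ to be the trivial one-point system $(\{z_0\},\id)$. With this choice the condition $\rho_Z(S^n z_0, z_0) < \ep$ holds vacuously for every $n \in \Z$, so the set $N_{z_0,x_0,\ep}$ appearing in Theorem \ref{thm-main1} collapses to exactly
\[
\{n \in \Z : \rho_X(T^{p_i(n)} x_0, x_0) < \ep,\ i = 1,\ldots,d\},
\]
which is precisely the kind of set we want to show is piecewise syndetic. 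Note that the hypotheses match: $\deg(p_i)\ge 2$ means each $p_i$ is nonlinear, and $p_i(0)=0$ is assumed.

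Concretely, I would proceed as follows. First, let $X_0$ be the residual set $X_\bullet$ furnished by Theorem \ref{thm-main1} applied to $(X,T)$ together with $\A=\{p_1,\ldots,p_d\}$; this set depends only on $(X,T)$ and $\A$ and is independent of $(Z,S)$. Second, fix $x \in X_0$ and a neighborhood $U$ of $x$, and pick $\ep>0$ so small that $B(x,\ep)\subseteq U$. Applying Theorem \ref{thm-main1} with $(Z,S)=(\{z_0\},\id)$ and $z=z_0$ yields that $\{n\in\Z: \rho_X(T^{p_i(n)}x,x)<\ep \text{ for all } i\}$ is piecewise syndetic. Since this set is contained in $N_{\{p_1,\ldots,p_d\}}(x,U)$, and since piecewise syndeticity is upward closed under inclusion (a finite union of shifts of a superset still contains arbitrarily long runs of integers), the set $N_{\{p_1,\ldots,p_d\}}(x,U)$ is itself piecewise syndetic, as required.

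The deduction presents no real obstacle: once Theorem \ref{thm-main1} is established, the corollary is a bookkeeping step obtained by trivializing the $(Z,S)$-factor. All the genuine difficulty, involving the interplay of minimality, nonlinear polynomial iterates, and the Furstenberg disjointness input discussed in the introduction, is already packaged inside Theorem \ref{thm-main1} itself.
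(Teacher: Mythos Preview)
Your proposal is correct and matches the paper's approach: the paper simply states that the corollary follows ``immediately'' from Theorem \ref{thm-main1}, and your device of taking $(Z,S)$ to be the trivial one-point system is a clean way to make that immediacy explicit. One cosmetic remark: Theorem \ref{thm-main1} yields a residual set $X_\bullet$, so to match the ``dense $G_\delta$'' wording of the corollary you should pass to a dense $G_\delta$ subset of $X_\bullet$, which is automatic.
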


In \cite[Theorem 4.11]{HSY22-1} we show that for a minimal t.d.s. $(X,T)$, the following statements are equivalent:
\begin{enumerate}
\item  There is a dense $G_\d$ subset $X_0$ of $X$ such that for each $x\in X_0$ and for any family $\A=\{p_1, p_2,\cdots, p_d\}$ of integral polynomials satisfying $(\spadesuit)$ (see Subsection \ref{subsection-4.2} for related definitions),  $(W_x^\A,\sigma)$ is an M-system (i.e. a transitive t.d.s. with a dense set of minimal points). 

\item There is a dense $G_\d$ subset $X_0$ of $X$ such that for each $x\in X_0$ and  for any integral polynomials $p_1,\ldots, p_d$ with $p_i(0)=0$, $1\le i\le d$, we have that for any neighbourhood $U$ of $x$
$$N_{p_1,\ldots,p_k}(x,U)=\{n\in\Z: T^{p_1(n)}x\in U, \ldots, T^{p_k(n)}x\in U\}$$
is piecewise syndetic in $\Z$.
\end{enumerate}

Thus by Corollary \ref{coro-ps}, we have
\begin{thm}[{\cite[Corollary 6.3]{HSY22-1}}]\label{}
Let $(X,T)$ be a minimal t.d.s.
There is a dense $G_\d$ subset $X_0$ of $X$ such that for each $x\in X_0$ and for any family $\A=\{p_1, p_2,\cdots, p_d\}$ of integral polynomials satisfying $(\spadesuit)$ and $\deg p_i\ge 2, 1\le i\le d$,  $(W_x^\A,\sigma)$ is an M-system. 
\end{thm}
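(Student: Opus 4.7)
The plan is to derive the theorem as an immediate consequence of Corollary \ref{coro-ps} and the direction (2) $\Rightarrow$ (1) of the equivalence \cite[Theorem 4.11]{HSY22-1} recalled just above, together with a standard diagonalisation to make the exceptional set independent of the polynomial family.

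First I would observe that any integral polynomial is determined by finitely many integer coefficients in the binomial basis $\binom{n}{0},\binom{n}{1},\ldots$, so there are only countably many finite families $\A=\{p_1,\ldots,p_d\}$ of integral polynomials satisfying $p_i(0)=0$ and $\deg p_i\ge 2$ for every $i$. For each such $\A$, Corollary \ref{coro-ps} supplies a dense $G_\d$ subset $X_\A\subset X$ on which the return time set $N_\A(x,U)$ is piecewise syndetic for every neighbourhood $U$ of $x$. Intersecting over this countable collection, $X_0:=\bigcap_\A X_\A$ remains a dense $G_\d$ subset of $X$, and on it the piecewise syndetic property holds simultaneously for every admissible $\A$.

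Now fix $x\in X_0$ and any family $\A$ satisfying $(\spadesuit)$ with $\deg p_i\ge 2$. The previous paragraph verifies the hypothesis of condition (2) of \cite[Theorem 4.11]{HSY22-1} for $x$ and for each such $\A$, so the implication (2) $\Rightarrow$ (1) of that theorem delivers that $(W_x^\A,\sigma)$ is an M-system, which is exactly the desired conclusion.

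The step that I expect to require the most care is checking that the proof of (2) $\Rightarrow$ (1) in \cite{HSY22-1} does not invoke the piecewise syndeticity of return times for families containing a linear polynomial when the conclusion concerns only families with $\deg p_i\ge 2$. Since $(\spadesuit)$ governs which families are admissible on the $W_x^\A$ side, the polynomials plugged into condition (2) during the argument will themselves be of degree at least two, so the degree-restricted version of the implication goes through and matches precisely what Corollary \ref{coro-ps} provides.
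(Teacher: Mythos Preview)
Your proposal is correct and follows the same approach as the paper: combine Corollary~\ref{coro-ps} with the implication (2)\,$\Rightarrow$\,(1) of \cite[Theorem 4.11]{HSY22-1}. In fact you are more explicit than the paper, which simply writes ``Thus by Corollary~\ref{coro-ps}, we have\ldots''; your countable intersection to produce a single $X_0$ and your remark about the degree restriction in the implication (2)\,$\Rightarrow$\,(1) are both legitimate points that the paper leaves implicit.
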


Since for a distal system $(X,T)$ and for each point $x\in X$, $(x,x,\ldots, x)$ is a minimal pint under action $T^{a_1}\times T^{a_2}\times \cdots \times T^{a_k}$ for any $a_1,a_2,\ldots, a_k\in \Z$, by Theorem \ref{thm-main1} we have the following results for distal systems immediately.

\begin{cor}[{\cite[Theorem C]{HSY22-1}}]
Let $(X,T)$ be a minimal t.d.s. and $d\in \N$.  Let $p_{i}$ be an integral polynomial with $p_{i}(0)=0$, $1\le i\le d$. If $(X,T)$ is distal,
then there is a dense $G_\delta$ subset $X_0$ such that for each $x\in X_0$
and each neighbourhood $U$ of $x$
$$N_{\{p_1,\ldots,p_d\}}(x,U)=\{n\in\Z: T^{p_1(n)}x\in U, \ldots, T^{p_d(n)}x\in U\}$$
is piecewise syndetic.
\end{cor}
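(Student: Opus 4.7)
My plan is to reduce the statement to Theorem \ref{thm-main1} by absorbing the linear polynomials into an auxiliary product system and exploiting the fact that, in a distal system, the diagonal point of any finite product action is minimal.

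Concretely, after discarding any identically zero $p_i$, I partition the family $\{p_1,\ldots,p_d\}$ into the linear polynomials $p_{i_1}(n)=a_{i_1}n,\ldots, p_{i_k}(n)=a_{i_k}n$ with $a_{i_j}\neq 0$, and the nonlinear polynomials $q_1,\ldots,q_\ell$. Form the auxiliary t.d.s.
$$(Z,S)=\bigl(X^k,\ T^{a_{i_1}}\times T^{a_{i_2}}\times\cdots\times T^{a_{i_k}}\bigr),$$
equipped with the maximum metric $\rho_Z$ inherited from $\rho_X$. Since $(X,T)$ is distal, the product system $(Z,S)$ is also distal, so every one of its points is uniformly recurrent. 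In particular, for any $x_0\in X$, the diagonal point $z=(x_0,\ldots,x_0)\in Z$ is a minimal point of $(Z,S)$.

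Assuming $\ell\geq 1$, I then apply Theorem \ref{thm-main1} to $(X,T)$ with the nonlinear family $\A=\{q_1,\ldots,q_\ell\}$, the ambient t.d.s. $(Z,S)$, and the minimal point $z$ just constructed. This produces a residual set $X_\bullet\subseteq X$, depending only on $\A$ and hence on the fixed data $p_1,\ldots,p_d$, such that for each $x_0\in X_\bullet$ and each $\ep>0$ the set $N_{z,x_0,\ep}$ is piecewise syndetic. Unraveling $\rho_Z(S^nz,z)<\ep$ coordinate by coordinate gives $\rho_X(T^{a_{i_j}n}x_0,x_0)<\ep$ for $j=1,\ldots,k$, which are exactly the conditions contributed by the linear $p_{i_j}$; combined with the nonlinear conditions already present, this yields piecewise syndeticity of $N_{\{p_1,\ldots,p_d\}}(x_0,U)$ for every neighborhood $U$ of $x_0$, so $X_0:=X_\bullet$ works.

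The remaining case $\ell=0$ (all $p_i$ linear or zero) follows directly from distality: every point of $(Z,S)$ is uniformly recurrent, so the return times of $z=(x_0,\ldots,x_0)$ to any open neighborhood are syndetic, and we may take $X_0=X$. There is no real obstacle here once Theorem \ref{thm-main1} is in hand; the only conceptual input is the classical fact that distality lifts to arbitrary products, making the diagonal point minimal, and the only bookkeeping is choosing a product metric so that $\rho_Z$-closeness translates coordinatewise to $\rho_X$-closeness, which is standard.
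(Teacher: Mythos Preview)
Your proposal is correct and follows essentially the same approach as the paper: both arguments use the fact that, in a distal system, the diagonal point $(x,\ldots,x)$ is minimal under $T^{a_1}\times\cdots\times T^{a_k}$ for any $a_1,\ldots,a_k\in\Z$, thereby absorbing the linear polynomials into the $(Z,S)$-slot of Theorem~\ref{thm-main1} while the nonlinear ones form $\A$. The paper compresses this into a single sentence, whereas you spell out the linear/nonlinear split, the choice of product metric, and the degenerate case $\ell=0$ explicitly, but the substance is identical.
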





\subsection{Ideas of the proofs}\
\medskip

We use the following example to explain the basic ideas of the proof of the main results.

\begin{exam}
Let $(X,T)$ be a weakly minimal t.d.s. Then there is a residual subset $X_0$ of $X$ such that for each $x\in X_0$ and any minimal point $z\in Z$, where $(Z,S)$ is any t.d.s., there is some sequence $n_i\to\infty, i\to\infty$ such that
\begin{equation*}
  S^{n_i}z\to z,\quad T^{n_i^2}x\to x,\ i\to\infty.
\end{equation*}
\end{exam}

To show the example recall that for each $x\in X$,
$$\omega_x^\A=(\ldots, T^{p(-1)}x,  \underset{\bullet }x, T^{p(1)}x, T^{p(2)}x, \ldots)=(T^{p(n)}x)_{n\in {\Z}},$$
where $\A=\{p\}$ with $p(n)=n^2$ for any $n\in\Z$.
Since $(X,T)$ is weakly mixing, By Theorem  \cite[Theorem 1.2]{HSY-19-1} (or in Theorem \ref{thm-poly-saturate}, $X=X^*$ and $X_\infty=X_\infty^*$
is trivial), we get that there is a residual subset $X_0$ of $X$ such that for all $x\in X_0$, $\overline{\O}(\w_x^\A,\sigma)=X^\Z$.

Now let $x\in X_0$, and let $(Z,S)$ be a minimal t.d.s. and $z\in Z$. Since $\w_x^\A$ is a transitive point of $(X^\Z,\sigma)$ and $Z=\overline{\O}(z, S)$ is minimal, $\overline{\O}\Big((z,\w_x^\A), S\times \sigma\Big)$ is a joining of the t.d.s. $(Z, S)$ and $(X^\Z,\sigma)$. Since $(Z,S)$ is a minimal system, by Furstenberg disjointness theorem, 
$(X^\Z,\sigma)$ is disjoint from $(Z, S)$. It follows that
$$\overline{\O}\Big((z,\w_x^\A), S \times \sigma\Big)=Z \times X^\Z.$$
In particular, $(z,\w_x^\A)\in \overline{\O}\Big((z,\w_x^\A), S \times \sigma\Big)$.
Hence there is some sequence $n_i\to\infty, i\to\infty$ such that
\begin{equation*}
  S^{n_i}z\to z,\quad T^{n_i^2}x=T^{p(n_i)} x \to x,\ i\to\infty.
\end{equation*}

\medskip
When $(X,T)$ is not weakly mixing, its maximal $\infty$-step pro-nilfactor $X_\infty$ is not trivial. Then the proof is much more involved,
though the basic idea of the proof is similar to the weakly mixing case. Namely, in the general case, except some special situations we will show that up to an $\ep$-error, the induced dynamical system $\overline{\O}(\w_x^\A,\sigma)$ can be viewed as the product of a Bernoulli system with an $\infty$-step pronilsystem by using Theorem \ref{thm-0-dim-relative} which states that every factor map between minimal t.d.s. has an almost one to one
zero dimensional extension, the saturation theorem \cite{HSY22-1, Qiu} for polynomials and a coding process. Then we use Furstenberg disjointness theorem to complete the proof.

\subsection{Some conjectures}\
\medskip

We have the following conjectures:

\begin{conj}\label{c-1}
{\it There are minimal systems $(X,T)$ and $(X,S)$  such that $(x,x)$ is not recurrent under $T\times S$ for any $x\in X$.}
\end{conj}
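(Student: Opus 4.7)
The plan is to construct a compact metric space $X$ carrying two minimal homeomorphisms $T$ and $S$ such that for every $x \in X$ there is a neighbourhood $U$ of $x$ with $\{n \in \Z\setminus\{0\} : T^n x \in U\text{ and }S^n x \in U\}$ finite. Any such example must be essentially non-nilpotent, in the sense that $\langle T, S\rangle$ cannot be a nilpotent group: otherwise Leibman's polynomial Birkhoff theorem recalled in the introduction, applied with $T_1=T$, $T_2=S$ and $p_{1,1}(n)=p_{2,2}(n)=n$, $p_{1,2}(n)=p_{2,1}(n)=0$, produces a dense $G_\delta$ set of $x$ for which $(x,x)$ is recurrent under $T\times S$, directly contradicting the conjecture. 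So the natural acting group is the free group $F_2=\langle a,b\rangle$, and I would look for a minimal $F_2$-subshift whose two generator actions individually remain minimal.

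The first step is to fix a decreasing sequence of finite-index normal subgroups $\Gamma_n\lhd F_2$ with $\bigcap_n \Gamma_n=\{e\}$ and to build a Toeplitz-type point $\eta\in\{0,1\}^{F_2}$ by filling successive $\Gamma_n$-cosets with pre-chosen finite patterns; the orbit closure $X=\overline{F_2\cdot\eta}$ is then a minimal $F_2$-subshift, and $T,S$ are the shifts by $a$ and $b$. The second step is to arrange that $(X,T)$ and $(X,S)$ are individually minimal (not automatic from $F_2$-minimality) by demanding that every admissible $a$-periodic word and every admissible $b$-periodic word appear along every orbit with uniformly bounded gaps. The third and crucial step is to tune the patterns so that for a small cylinder $U$ around any $x\in X$ the return-time set $N_T(x,U)$ lies, up to a bounded set, in a sparse arithmetic class $A\subset\Z$ while $N_S(x,U)$ lies in a disjoint class $B\subset\Z$; both sets must be syndetic by minimality, so the disjointness has to be forced through an arithmetic obstruction such as parity at growing scales or disjoint Bohr-like components derived from the word metric on $F_2$.

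The hard part, and the reason this remains a conjecture, is that the above requirements pull against each other. Building a parity obstruction into the return-time set of $T$ at scale $n$ forces long $a$-periodic runs into the defining patterns, and such runs tend to collapse the density of $b$-orbits, destroying minimality of $S$. A successful construction must therefore balance the arithmetic structure that separates the two return-time sets against the combinatorial demands of joint minimality. A promising route is to start from a symbolic realisation of a Heisenberg-type nilsystem (to retain enough algebraic control to preserve the marginals' minimality) and then take a proximal extension carrying a cocycle that twists $T$ against $S$ at every scale; one would then need to verify that the extension remains minimal under each generator while the diagonal orbit under $T\times S$ escapes every small neighbourhood of itself after time zero. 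This verification is exactly the obstacle that keeps the statement conjectural, and I would expect any eventual proof to hinge on precisely this combinatorial balancing act.
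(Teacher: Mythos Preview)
The statement you are attempting to prove is labelled as a \emph{conjecture} in the paper and is left open: the paper provides no proof of it. The only remark the authors make is that, without the minimality hypotheses on both $(X,T)$ and $(X,S)$, such examples already exist (they cite Furstenberg's book). So there is no ``paper's own proof'' to compare your attempt against.

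Your proposal is not a proof either, and you say as much in the final paragraph. What you have written is a reasonable discussion of the obstructions and a sketch of a plausible strategy (Toeplitz-type $F_2$-subshifts with controlled return-time arithmetic), together with a correct observation that Leibman's theorem forces $\langle T,S\rangle$ to be non-nilpotent in any example. But the core construction---arranging that $N_T(x,U)$ and $N_S(x,U)$ are syndetic yet have finite intersection, uniformly in $x$, while keeping each $\Z$-action minimal---is exactly the unresolved difficulty, and your text identifies it without overcoming it. In particular, the ``parity at growing scales'' idea is not worked out: you would need to specify the subgroups $\Gamma_n$, the filling patterns, and then verify both individual minimality and the empty-intersection property, none of which is done. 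As it stands this is a research plan, not a proof, which is consistent with the statement's status in the paper.
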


We remark that without assuming minimality, there are t.d.s. $(X,T)$ and $(X,S)$  such that $(x,x)$ is not recurrent under $T\times S$ for any $x\in X$, see instance \cite[Page 40]{F}.
\begin{conj}\label{c-3}
{\it For any pair of integral polynomials $p$ and $q$ vanishing at $0$ with $\deg p\ge 2$ and $\deg q\ge 2$, there are minimal systems $(X,T)$ and $(X,S)$ such that for any $x\in X$ there is no subsequence $\{n_i\}$ of $\Z$, $n_i\ra \infty$, with $T^{p(n_i)}x\ra x$ and $S^{q(n_i)}x\ra x$.}
\end{conj}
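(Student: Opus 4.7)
The goal is to construct two minimal homeomorphisms $T,S$ on a common compact metric space $X$ such that no point $x\in X$ admits a subsequence $n_i\to\infty$ with $T^{p(n_i)}x\to x$ and $S^{q(n_i)}x\to x$ simultaneously. My strategy is to start from the measure-theoretic counterexamples of Kosloff and Sanadhya referenced in the excerpt, which produce, for every pair with $\deg p,\deg q\ge 2$, ergodic measure-preserving $T,S$ such that $\frac{1}{N}\sum_{n=0}^{N-1}f(T^{p(n)}x)g(S^{q(n)}x)$ fails to converge in $L^2$. The guiding hope is that the combinatorial mechanism driving non-convergence can be engineered into a topological model so as to also defeat pointwise recurrence.

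The first step would be to build a common minimal space carrying both actions. The Jewett--Krieger theorem gives a uniquely ergodic minimal model for a single transformation; here two typically non-commuting actions must share the space. I would attempt a symbolic construction: take $X$ to be a minimal subshift of $A^{\Z}$ for a suitable alphabet $A$, produced by a cutting-and-stacking or substitution procedure that simultaneously encodes both actions, with $T$ the shift and $S$ defined by a second combinatorial rule. A tentative blueprint is to combine a Toeplitz-style factor controlling the coarse structure with a cocycle-type component realizing the Kosloff--Sanadhya obstruction on every tower level.

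The second step is to upgrade the measure-theoretic obstruction to a pointwise one: for each $x\in X$ and $\varepsilon>0$, one needs the return-time sets
\[
R_T(x,\varepsilon)=\{n\in\Z:\rho(T^{p(n)}x,x)<\varepsilon\},\quad R_S(x,\varepsilon)=\{n\in\Z:\rho(S^{q(n)}x,x)<\varepsilon\}
\]
to intersect in a bounded subset of $\Z$, so that no unbounded sequence lies in both. The plan is to arrange the symbolic structure so that whenever $T^{p(n)}x$ enters an $\varepsilon$-ball at $x$, the combinatorial code forces $S^{q(n)}x$ into a far-away region, and to do this uniformly in $x$.

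The hard part will be precisely this upgrade from a typical-point statement to an every-point statement. In minimal systems, multiply recurrent points tend to form residual subsets, as the present paper shows whenever one of the polynomials is linear; consequently the non-recurrence must be uniform across $X$, not merely generic. This rules out any black-box transfer from the ergodic counterexamples and demands a hands-on symbolic construction with engineered gaps in the polynomial return-time sets that persist for all points simultaneously. This is presumably why the statement is presented as a conjecture rather than a theorem, and it is where I would expect the bulk of the technical work to lie.
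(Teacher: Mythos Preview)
The statement you are addressing is a \emph{conjecture} in the paper, not a theorem; the paper provides no proof of it. There is therefore nothing to compare your proposal against.

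Your proposal is not a proof but a speculative strategy, as you yourself acknowledge in the final paragraph. Two concrete gaps are worth naming. First, the Kosloff--Sanadhya counterexamples establish non-convergence of ergodic averages in $L^2$; this is a different phenomenon from pointwise non-recurrence, and one does not imply the other. Non-convergence of $\frac{1}{N}\sum_{n=0}^{N-1} f(T^{p(n)}x)g(S^{q(n)}x)$ is perfectly compatible with the existence of a sparse sequence $n_i\to\infty$ along which $(T^{p(n_i)}x,S^{q(n_i)}x)\to(x,x)$. No mechanism is proposed for bridging this gap. Second, the requirement that $R_T(x,\varepsilon)\cap R_S(x,\varepsilon)$ be bounded for \emph{every} $x\in X$ and some $\varepsilon=\varepsilon(x)>0$ is extremely strong, and as the main theorem of the paper shows, it fails generically whenever one of the polynomials is linear. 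Your construction would have to exploit the nonlinearity of both $p$ and $q$ in an essential way, but the proposal gives no indication of how the symbolic coding would achieve this uniformly across all points. The Jewett--Krieger machinery you invoke handles single transformations (or commuting families); realizing two non-commuting transformations on a common minimal space with prescribed joint return-time behavior is itself a nontrivial and largely unexplored problem.

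In short: a complete resolution of this conjecture would be a new result, not a reproduction of existing work, and your outline does not yet contain the key idea needed to produce one.
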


\begin{conj}\label{c-2} {\it There are minimal systems $(X,T)$ and $(X,S)$  such that for any $x\in X$ there is no subsequence $\{n_i\}$ of $\Z$,
$n_i\ra \infty$, with $T^{n_i}x\ra x$, $T^{2n_i}x\ra x$ and $S^{n_i^2}x\ra x$.}
\end{conj}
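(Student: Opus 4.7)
The plan is to construct minimal $(X,T)$ and $(X,S)$ on a common compact metric space $X$ so that for every $x\in X$ and every $\ep>0$ the set
$$R(x,\ep)=\{n\in\Z:\rho(T^nx,x)<\ep,\ \rho(T^{2n}x,x)<\ep,\ \rho(S^{n^2}x,x)<\ep\}$$
is bounded. Theorem \ref{thm-main1} is silent here because $p_1(n)=n$ and $p_2(n)=2n$ are linear, so the disjointness with $\infty$-step pro-nilfactors that drives the paper's positive results breaks down. The obstruction I would exploit is that $T^{n_i}x\to x$ together with $T^{2n_i}x\to x$ force $n_i$ to lie in the return-time set of the distal (in particular Kronecker) factor of $(X,T)$; one then needs $S$ to have no quadratic recurrence along this sparser set of times.

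Before attempting the full conjecture, I would try to establish the weaker Conjecture \ref{c-1} first: produce minimal $(X,T)$ and $(X,S)$ on a common $X$ so that $(x,x)$ is never recurrent under $T\times S$. Any such example automatically settles Conjecture \ref{c-2} as well, since failure of simultaneous $T^{n_i}x\to x,\ S^{n_i}x\to x$ is strictly stronger than the polynomial version $T^{n_i}x\to x,\ T^{2n_i}x\to x,\ S^{n_i^2}x\to x$. The non-minimal Furstenberg-type examples of \cite[Page 40]{F} rely crucially on bases that are far from minimal, so the core task is to manufacture minimality of both $T$ and $S$ while preserving the joint-recurrence failure.

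My candidate construction would be one of two types. First, take $X$ to be a Cantor space carrying two minimal weakly mixing $\Z$-actions arising from a common Bratteli--Vershik diagram but with distinct, incompatible Vershik orderings, chosen so that $T$ and $S$ together generate a free (or at least non-solvable) subgroup of $\mathrm{Homeo}(X)$. Alternatively, try a homogeneous model $X=G/\Gamma$ for a non-nilpotent Lie group $G$ and uniform lattice $\Gamma$, with $T,S\in G$ of distinct geometric type each acting minimally by left translation. In either case one would then try to prove that mixing of $S$ along the sparse set $\{n_i^2\}$, combined with the Kronecker-like structure of the joint return times of $(T^n,T^{2n})$, forces $R(x,\ep)$ to be bounded for every $x$ and every sufficiently small $\ep$.

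The main obstacle is that requiring both $T$ and $S$ to be minimal on the same space is an extremely rigid constraint: the classical constructions producing such pairs (affine actions on nilmanifolds, Bratteli--Vershik models, unipotent actions on homogeneous spaces) tend to place $T,S$ in a common nilpotent envelope, in which case Leibman's polynomial recurrence theorem forces simultaneous recurrence and kills the construction. Even once a candidate triple $(X,T,S)$ is in hand, the remaining difficulty is to rigorously show that the diagonal $\Delta_X\subset X^3$ is disjoint from the orbit closure of $\bigl(T^nx,T^{2n}x,S^{n^2}x\bigr)_{n\in\Z}$ for every $x$ — essentially a reverse of the Furstenberg disjointness arguments used in the paper, and correspondingly far more delicate, which is presumably why the authors leave it as a conjecture.
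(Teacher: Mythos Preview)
The statement is Conjecture~\ref{c-2}, and the paper offers no proof --- it is explicitly listed as an open problem in Section~1.5. Your proposal correctly recognises this at the end and is really a research plan rather than a proof, so there is no paper-proof to compare against.

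That said, your plan contains a genuine logical error. You claim that an example witnessing Conjecture~\ref{c-1} ``automatically settles Conjecture~\ref{c-2} as well, since failure of simultaneous $T^{n_i}x\to x,\ S^{n_i}x\to x$ is strictly stronger than the polynomial version.'' This implication is false: the two conditions are not nested. Conjecture~\ref{c-1} forbids sequences with $T^{n_i}x\to x$ and $S^{n_i}x\to x$; Conjecture~\ref{c-2} forbids sequences with $T^{n_i}x\to x$, $T^{2n_i}x\to x$, and $S^{n_i^2}x\to x$. The $S$-condition in the second involves $n_i^2$, not $n_i$, so neither statement implies the other. In fact, apply Theorem~\ref{thm-main1-1} with the roles of $T$ and $S$ swapped (take the nonlinear polynomial $p(n)=n^2$ for the system $(X,S)$, and take $(Z,S)$ in the theorem to be $(X,T)$ with $z=x$): for a residual set of $x$ there \emph{always} exists a sequence $n_i\to\infty$ with $T^{n_i}x\to x$ and $S^{n_i^2}x\to x$, for \emph{any} pair of minimal systems $(X,T)$, $(X,S)$ --- including any putative witness to Conjecture~\ref{c-1}. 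The entire content of Conjecture~\ref{c-2} is therefore whether the additional \emph{linear} constraint $T^{2n_i}x\to x$ can be made incompatible with these; this is tied to the failure of multiply $2$-minimality of $(X,T)$ (cf.\ the corollary following Theorem~\ref{thm-main1-2}), not to Conjecture~\ref{c-1}.

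Your candidate constructions and the obstacles you list are reasonable heuristics, but none of them constitutes even a partial argument; as you yourself note, the statement remains open.
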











\subsection*{Organization of the paper}

We organize the paper as follows. In Section 2, we provide necessary notions. Three main tools, i.e. the saturation theorem
of polynomials, the induced spaces and the zero dimensional almost one to one minimal extension are stated in
Sections 3-5 respectively. In Section 6, we show the mains results.
In the Appendix we give the proof of Theorem \ref{thm-0-dim-relative}.

\medskip
\noindent {\bf Acknowledgements:} We would like to thank Tomasz Downarowicz for useful discussions related to Theorem \ref{thm-0-dim-relative}. Also we thank
Jian Li and Hui Xu for useful suggestions.

\section{Preliminaries}\label{Section-pre}

In this section we give some necessary notions and some known facts used in the paper.
Note that, instead of just considering a single transformation $T$,
we will consider commuting
transformations $T_1$, $\ldots$ , $T_k$ of $X$. We only recall some basic
definitions and properties of systems for
one transformation. Extensions to the general case are
straightforward.

\subsection{Topological dynamical systems}\

\subsubsection{}

By a {\em topological dynamical system} (t.d.s.) we mean a pair $(X,T)$, where
$X$ is a compact Hausdorff space (in the paper we mainly deal with a compact metric space $X$ with a metric $\rho$) and $T:X\to X$ is a
homeomorphism. Let $(X, T)$ be a t.d.s. and $x\in X$. Then $\O(x,T)=\{T^nx: n\in \Z\}$ denotes the
{\em orbit} of $x$. A subset $A\subseteq X$ is called {\em invariant} (or {$T$-invariant}) if $TA= A$. When $Y\subseteq X$ is a closed and
invariant subset of the system $(X, T)$, we say that the system
$(Y, T|_Y)$ is a {\em subsystem} of $(X, T)$. Usually we will omit the subscript, and denote $(Y, T|_Y)$ by $(Y,T)$.
If $(X, T)$ and $(Y, S)$ are two t.d.s., their {\em product system} is the
system $(X \times Y, T\times S)$.

\medskip

When there are more than one t.d.s. involved, usually we should use different symbols to denote different transformations on different spaces, for example, $(X,T), (Y,S), (Z,H)$ etc. But when no confusing, it is convenient to use only one symbol $T$ for all transformations in all t.d.s. involved,  for example, $(X,T), (Y,T), (Z,T)$ etc.
In this paper, we {\em  use the same symbol $T$ for the transformations in all t.d.s.}

\subsubsection{}
Let $X, Y$ be compact metric spaces and $\phi: X \to Y$ be a map.  For $n \geq 2$ let
\begin{equation}\label{ite-n}
\phi^{(n)}=\underbrace{\phi\times \cdots \times \phi }_{\text{($n$ times)}}: X^n\rightarrow Y^n.
\end{equation} Thus
we write $(X^n,T^{(n)})$ for the $n$-fold product system $(X\times	\cdots \times X,T\times \cdots \times T)$.
The diagonal of $X^n$ is $$\Delta_n(X)=\{(x,\ldots,x)\in X^n: x\in X\}.$$
When $n=2$ we write	$\Delta(X)=\Delta_2(X)$.

\subsubsection{}
A t.d.s. $(X,T)$ is called {\em minimal} if $X$ contains no proper nonempty
closed invariant subsets. It is easy to verify that a t.d.s. is
minimal if and only if every orbit is dense. In a t.d.s. $(X, T)$ we say that a point $x\in X$ is {\em minimal} if $(\overline{\O(x,T)}, T)$ is minimal.
When $X$ is a metric compact space, $(X,T)$ is transitive if and only if there is some point $x\in X$ such that $\overline{\O(x,T)}=X$, which is called a {\em transitive point}.

\subsubsection{}
Let $(X,T)$ be a t.d.s. A pair $(x,y)\in X^2$ is {\it proximal} if $\inf_{n\in \Z} \rho(T^nx,T^ny)=0$; and it is {\it distal} if it is not proximal. Denote by ${\bf P}(X,T)$ the set of all proximal pairs of $(X,T)$.
A t.d.s. $(X,T)$ is
called {\it distal} if $(x,x')$ is distal whenever $x,x'\in X$ are
distinct. A point $x\in X$ is
said to be {\em distal} if whenever $y$ is in the orbit closure of
$x$ and $(x,y)$ is proximal, then $x = y$.


\subsubsection{}
For a t.d.s. $(X,T)$, $x\in X$ and $U\subseteq X$ let
$$N_T (x,U)=\{n\in \Z: T^nx\in U\}.$$
A point $x\in X$ is said to be {\em recurrent} if there is a sequence $n_i\to \infty$ such that
$T^{n_i}x\to x,$ as $ i\to\infty$.


\subsubsection{}
Generally, a {\em $G$-system} is a triple $\X=(X, G, \Pi)$ or just $(X,G)$, where $X$ is a compact Hausdorff space, $G$ is a Hausdorff topological group with the unit $e$ and $\Pi: G\times X\rightarrow X$ is a continuous map such that $\Pi(e,x)=x$ and $\Pi(s,\Pi(t,x))=\Pi(st,x)$ for all $s,t\in G$ and $x\in X$. We shall fix $G$ and suppress the action symbol. 
An analogous definition can be given if  $G$ is a semigroup. Also, the notions of transitivity, minimality and weak mixing are naturally generalized to group actions.

\subsection{Some facts about hyperspaces} \label{sub:ellis} \

\subsubsection{}
Let $X$ be a compact metric space. Let $2^X$ be the collection of nonempty closed subsets of $X$.
Let $\rho$ be the metric on $X$.
One may define a metric on $2^X$ as follows:
\begin{equation*}
\begin{split}
 \rho_H(A,B)& = \inf \{\ep>0: A\subseteq B_\ep(B), B\subseteq B_\ep(A)\}\\
 &= \max \Big\{\max_{a\in A} \rho(a,B),\max_{b\in B} \rho(b,A)\Big\},
\end{split}
\end{equation*}
where $\rho(x,A)=\inf_{y\in A} \rho(x,y)$ and $B_\ep (A)=\{x\in X: \rho(x, A)<\ep\}$.
The metric $\rho_H$ is called the {\em Hausdorff metric} of $2^X$.

Let $\{A_i\}_{i=1}^\infty$ be an arbitrary sequence of subsets of $X$. Define
$$\liminf A_i=\{x\in X: \text{for any neighbourhood $U$ of $x$, $U\cap A_i\neq \emptyset$ for all but finitely many $i$}\};$$
$$\limsup A_i=\{x\in X: \text{for any neighbourhood $U$ of $x$, $U\cap A_i\neq \emptyset$ for infinitely many $i$}\}.$$
We say that $\{A_i\}_{i=1}^\infty$ converges to $A$, denoted by $\lim_{i\to \infty} A_i=A$, if
$$\liminf A_i=\limsup A_i=A.$$
Now let $\{A_i\}_{i=1}^\infty\subseteq 2^X$ and $A\in 2^X$. Then $\lim_{i\to\infty} A_i=A$ if and only if $\{A_i\}_{i=1}^\infty $ converges to $A$ in $2^X$ with respect to the Hausdorff metric.

\subsubsection{}
Let $X,Y$ be two compact metric spaces. Let $F: Y\rightarrow 2^X$ be a map and $y\in Y$.
We say that $F$ is {\em upper semi-continuous (u.s.c.)} at $y$ if whenever $\lim y_i=y$, one has that $\limsup F(y_i)\subseteq F(y)$. If $F$ is u.s.c. at every point of $Y$, then we say that $F$ is u.s.c.
We say $F$ is {\em lower semi-continuous (l.s.c.)} at $y$ if whenever $\lim y_i=y$, one has that $\liminf F(y_i)\supset F(y)$. If $F$ is l.s.c. at every point of $Y$, then we say that $F$ is l.s.c.


It is easy to verify that $F: Y\rightarrow 2^X$ is u.s.c. at $y\in Y$ if and only if for each $\ep>0$ there exists a neighbourhood $U$ of $y$ such that $F(U)\subseteq B_\ep(F(y))$; and $F: Y\rightarrow 2^X$ is l.s.c. at $y\in Y$ if and only if for each $\ep>0$ there exists a neighbourhood $U$ of $y$ such that $F(y)\subseteq B_\ep(F(y'))$ for all $y'\in U$.

If $f: X\rightarrow Y$ is a continuous surjective map, then it is easy to verify that
$$F=f^{-1}: Y\rightarrow 2^X, y\mapsto f^{-1}(y)$$ is u.s.c. Let $(X,T)$ be t.d.s. Then the map $$F: X\rightarrow 2^X, x\mapsto \overline{\O(x,T)}$$ is l.s.c.

\medskip

We have the following well known result, for a proof see \cite[p.70-71]{Kura2} and \cite[p.394]{Kura1}, or \cite{Fort}.
\begin{thm}\label{thm-Fort}
Let $X,Y$ be compact metric spaces. If $F: Y\rightarrow 2^X$ is u.s.c. (or l.s.c.),
then the points of continuity of $F$ form a dense $G_\delta$ set in $Y$.
\end{thm}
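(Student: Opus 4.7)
The plan is to give a proof along the classical lines of Fort's theorem: isolate a well-behaved Hausdorff oscillation to obtain the $G_\delta$ half, and then run a Baire category argument to obtain density. I will treat only the upper semi-continuous case; the lower semi-continuous case is symmetric.

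For the $G_\delta$ part, I introduce
$$\omega_F(y)=\inf\Bigl\{\sup_{y_1,y_2\in V}\rho_H(F(y_1),F(y_2)) \;:\; V \text{ open}, \; y\in V\Bigr\}.$$
Regarding $F$ as a map $Y\to(2^X,\rho_H)$, continuity of $F$ at $y$ is equivalent to $\omega_F(y)=0$, and the set $\{y:\omega_F(y)<\epsilon\}$ is open: if an open $V\ni y$ witnesses $\sup_{y_1,y_2\in V}\rho_H(F(y_1),F(y_2))<\epsilon$, then the same $V$ witnesses $\omega_F(y')<\epsilon$ for every $y'\in V$. Hence the continuity set is $\bigcap_{n\geq 1}\{y:\omega_F(y)<1/n\}$, which is a $G_\delta$ subset of $Y$.

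For density I use a Baire argument tuned to upper semi-continuity. Fix a countable base $\{W_i\}_{i\geq 1}$ of $X$ consisting of open balls with rational radii centred in a countable dense set, and set $E_i=\{y\in Y:F(y)\cap\overline{W_i}\neq\emptyset\}$. Upper semi-continuity of $F$ says that $\{y:F(y)\subseteq X\setminus\overline{W_i}\}$ is open, so each $E_i$ is closed; consequently $\partial E_i$ is closed and nowhere dense, and $M:=\bigcup_i\partial E_i$ is meager. By the Baire category theorem, $Y\setminus M$ is dense in $Y$. Now I verify that every $y\notin M$ is a continuity point of $F$: since $F$ is already u.s.c. everywhere, it suffices to check lower semi-continuity at $y$. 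If lower semi-continuity failed at $y$, I could extract $\epsilon>0$, a point $x\in F(y)$, and a sequence $y_n\to y$ with $\rho(x,F(y_n))>\epsilon$ for all $n$. Choosing a base element $W_i$ with $x\in W_i$ and $\overline{W_i}\subseteq B_{\epsilon/2}(x)$, the inclusion $x\in F(y)\cap W_i$ forces $y\in E_i$, whereas $\rho(x,F(y_n))>\epsilon$ forces $F(y_n)\cap\overline{W_i}=\emptyset$, i.e.\ $y_n\notin E_i$. Since $E_i$ is closed this places $y$ in $\partial E_i\subseteq M$, a contradiction.

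The principal subtlety is matching the semi-continuity hypothesis to the right decomposition: under u.s.c., ``meeting a closed set'' is the closed condition, so the sets whose boundaries we collect are closed, and their boundaries are automatically nowhere dense; and the base elements must be shrunk so that their closures sit inside the relevant $\epsilon$-ball. The l.s.c. case is handled symmetrically by taking $E_i'=\{y:F(y)\cap W_i\neq\emptyset\}$, which is open by lower semi-continuity, and running the same boundary argument against failure of upper semi-continuity.
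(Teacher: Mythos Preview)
Your argument is correct. The paper does not actually prove Theorem~\ref{thm-Fort}; it states the result as well known and refers the reader to Kuratowski and to Fort's original paper. Your proof follows the classical Fort argument: the oscillation characterisation gives the $G_\delta$ structure, and the Baire category step---collecting the boundaries of the closed sets $E_i=\{y:F(y)\cap\overline{W_i}\neq\emptyset\}$ in the u.s.c.\ case (respectively the open sets $E_i'=\{y:F(y)\cap W_i\neq\emptyset\}$ in the l.s.c.\ case)---yields density. The details are handled carefully, including the choice of base elements with $\overline{W_i}\subseteq B_{\epsilon/2}(x)$ so that the contradiction goes through, and the observation that closedness of $E_i$ makes $\partial E_i$ automatically nowhere dense. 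This is precisely the argument one finds in the cited references, so there is nothing to compare beyond noting that you have supplied what the paper omits.
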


\subsection{Factor maps}\

\subsubsection{}
A {\em factor map} $\pi: X\rightarrow Y$ between two t.d.s. $(X,T)$
and $(Y, T)$ is a continuous onto map which intertwines the
actions (i.e. $\pi\circ T= T\circ \pi$); one says that $(Y, T)$ is a {\it factor} of $(X,T)$ and
that $(X,T)$ is an {\it extension} of $(Y,T)$. The systems are said to be {\em isomorphic} if $\pi$ is bijective.

\subsubsection{}
Let $\pi: (X,T)\rightarrow (Y, T)$ be a factor map. Then
$$R_\pi=\{(x_1,x_2)\in X^2:\pi(x_1)=\pi(x_2)\}$$
is a closed invariant equivalence relation, and $Y=X/ R_\pi$.

Let $(X,T)$ and $(Y,T)$ be two t.d.s. and let $\pi: (X,T) \to (Y,T)$ be a factor map.
One says that:
\begin{itemize}
  \item $\pi$ is an {\it open} extension if it sends open sets to open sets; 
  \item $\pi$ is an {\it almost one to one} extension  if there exists a dense $G_\d$ subset $X_0\subseteq X$ such that $\pi^{-1}(\{\pi(x)\})=\{x\}$ for any $x\in X_0$;

\end{itemize}

The following is a well known fact about open mappings (see \cite[Appendix A.8]{Vr} for example).

\begin{thm}\label{thm-open}
	Let $\pi:(X,T)\rightarrow(Y,T)$ be a factor map of t.d.s. Then the map
$	\pi^{-1}:Y\rightarrow 2^X, y\mapsto \pi^{-1}(y)$
	is continuous if and only if $\pi$ is open.
\end{thm}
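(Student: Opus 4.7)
The plan is to split the equivalence into two implications, making essential use of the fact already recorded in the preliminaries that the preimage map $\pi^{-1}:Y\to 2^X$ is automatically upper semi-continuous (since $\pi$ is a continuous surjection). Since convergence in the Hausdorff metric on $2^X$ is equivalent to simultaneous lower and upper semi-continuity, continuity of $\pi^{-1}$ reduces to showing that $\pi^{-1}$ is also lower semi-continuous. Thus the task becomes to prove that $\pi^{-1}$ is l.s.c. if and only if $\pi$ is open.

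For the direction ``$\pi$ open $\Rightarrow$ $\pi^{-1}$ is l.s.c.'', I would fix $y\in Y$ together with any $x\in\pi^{-1}(y)$, and verify that $x$ belongs to $\liminf \pi^{-1}(y_i)$ for every sequence $y_i\to y$. Given any neighbourhood $U$ of $x$, openness of $\pi$ makes $\pi(U)$ an open neighbourhood of $y=\pi(x)$, so $y_i\in \pi(U)$ eventually; pulling back produces points $x_i\in U$ with $\pi(x_i)=y_i$, so $U\cap \pi^{-1}(y_i)\neq\emptyset$ for all but finitely many $i$, which is exactly the $\liminf$ characterization of l.s.c.\ recorded in the preliminaries.

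For the converse, suppose $\pi^{-1}$ is l.s.c., take an open $U\subseteq X$, and fix $y=\pi(x)\in \pi(U)$. Choosing $\ep>0$ with $B_\ep(x)\subseteq U$, I would apply the $\ep$-neighbourhood criterion for l.s.c.\ to produce a neighbourhood $V$ of $y$ with $\pi^{-1}(y)\subseteq B_\ep(\pi^{-1}(y'))$ for every $y'\in V$. Applied at $x$ this yields, for each $y'\in V$, a preimage $x'\in\pi^{-1}(y')$ with $\rho(x,x')<\ep$, hence $x'\in U$ and $y'=\pi(x')\in\pi(U)$. Therefore $V\subseteq\pi(U)$, so $\pi(U)$ is open.

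There is no real technical obstacle here: the argument is a formal matching of definitions and uses only the two semi-continuity criteria and surjectivity of $\pi$ (so that $\pi^{-1}(y')$ is nonempty and the preimage witnesses above actually exist). The only point requiring a little care is keeping the upper and lower semi-continuous characterizations straight and recognizing that the automatic u.s.c.\ half lets one focus exclusively on the l.s.c.\ side of the equivalence.
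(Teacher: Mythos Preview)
Your proof is correct and is the standard argument for this well-known fact. The paper does not actually supply a proof of this theorem; it merely cites \cite[Appendix A.8]{Vr} and states the result, so there is nothing substantive to compare against. Your reduction to lower semi-continuity via the automatic upper semi-continuity of $\pi^{-1}$, together with the two short verifications using the $\liminf$ and $\ep$-neighbourhood characterizations from the preliminaries, is exactly the expected approach.
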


\medskip

Every extension of minimal flows can be lifted to an open
extension by almost one to one modifications (\cite[Theorem 3.1]{V70}, \cite[Lemma III.6]{AuG77} or \cite[Chapter VI]{Vr}). To be precise,

\begin{thm}\label{O-diagram}
For every extension $\pi:X\rightarrow Y$ of minimal t.d.s. there
exists a commutative diagram of extensions
(called the {\em O-diagram})
\begin{equation*}
\xymatrix
{
X \ar[d]_{\pi}  &  X^* \ar[l]_{\sigma}\ar[d]^{\pi*} \\
Y &  Y^*\ar[l]^{\tau}
}
\end{equation*}
with the following properties:
\begin{enumerate}
\item[(a)]
$\sigma$ and $\tau$ are almost one to one;
\item[(b)]
$\pi^*$ is an open extension;
\item[(c)]
$X^*$ is the unique minimal set in $R_{\pi
\tau}=\{(x,y)\in X\times Y^*: \pi(x)=\tau (y)\}$ and $\sigma$ and
$\pi^*$ are the restrictions to $X^*$ of the projections of
$X\times Y^*$ onto $X$ and $Y^*$ respectively.
\end{enumerate}
\end{thm}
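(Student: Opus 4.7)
The plan is to build $Y^*$ as a minimal subset of the hyperspace $2^X$ and $X^*$ as a minimal subset of the fibred product $R_{\pi\tau}$, exploiting the upper semi-continuity of the fibre map together with Theorems \ref{thm-Fort} and \ref{thm-open}. First I would consider $\Phi:Y\to 2^X$, $y\mapsto \pi^{-1}(y)$, which is upper semi-continuous and $T$-equivariant with respect to the induced homeomorphism $T_*(A)=T(A)$ on $2^X$. Fix $y_0\in Y$ and let $Y^*$ be any minimal subset of the orbit closure $\overline{\O(\Phi(y_0),T_*)}\subseteq 2^X$. For each $A\in Y^*$, writing $A=\lim_n\Phi(y_n)$ in Hausdorff metric with $y_n\to y$ in $Y$, upper semi-continuity forces $A\subseteq \Phi(y)$, so $\pi$ is constant on $A$; this lets me define a continuous $T$-equivariant surjection $\tau:Y^*\to Y$ by $\tau(A)=\pi(a)$ for any $a\in A$. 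By Theorem \ref{thm-Fort} the set $Y_c\subseteq Y$ of continuity points of $\Phi$ is a dense $G_\delta$, and is $T$-invariant because $\Phi$ is $T$-equivariant and $T_*$ is a homeomorphism; for $y\in Y_c$, any $A\in\tau^{-1}(y)$ must coincide with $\Phi(y)$ by sandwiching upper and lower semi-continuity, so $\tau$ is almost one-to-one.

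Next, setting $R_{\pi\tau}=\{(x,A)\in X\times Y^*:\pi(x)=\tau(A)\}$, I would take $X^*$ to be any minimal subset, with $\sigma,\pi^*$ the two coordinate projections; surjectivity of both is automatic from minimality of $X$ and $Y^*$. For $(x,A)\in X^*$ with $\pi(x)\in Y_c$, any $(x,A')\in X^*$ has $\tau(A')=\pi(x)$ and hence $A'=\Phi(\pi(x))=A$, so $\sigma$ is one-to-one at $x$. Because $\sigma^{-1}:X\to 2^{X^*}$ is upper semi-continuous, the $T$-invariant set $\{x\in X:|\sigma^{-1}(x)|=1\}$ is a $G_\delta$; it is nonempty by the previous sentence together with surjectivity of $\pi$, hence dense in the minimal system $X$, so $\sigma$ is almost one-to-one.

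The hard part will be proving openness of $\pi^*$ together with uniqueness of $X^*$. The decisive claim is the identity $X^*=\{(x,A)\in R_{\pi\tau}:x\in A\}$: the right-hand side is plainly closed and $T$-invariant, and once shown to be minimal its uniqueness as the minimal subset of $R_{\pi\tau}$ follows. Establishing this minimality requires a careful dynamical argument inside $R_{\pi\tau}$---pick $(x_0,A_0)$ with $A_0=\Phi(\tau(A_0))$ a continuity fibre and $x_0\in A_0$, and use minimality of $(Y^*,T_*)$ together with upper semi-continuity of $\Phi$ and a diagonalisation to show that the orbit closure of $(x_0,A_0)$ exhausts the set. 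Granting the identity, $(\pi^*)^{-1}(A)=A\times\{A\}$ for every $A\in Y^*$, and the inverse-image map $A\mapsto A\times\{A\}$ from $Y^*$ into $2^{X^*}$ is continuous as the composition of the tautological inclusion $Y^*\hookrightarrow 2^X$ with the continuous assignment $B\mapsto B\times\{A\}$; Theorem \ref{thm-open} then yields openness of $\pi^*$. Commutativity $\pi\circ\sigma=\tau\circ\pi^*$ is built into the defining identity of $R_{\pi\tau}$.
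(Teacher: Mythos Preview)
Your construction is essentially the same as the paper's sketch: both build $Y^*$ inside $2^X$ via the fibre map $\Phi=\pi^{-1}$, identify $X^*$ with $\{(x,A)\in X\times Y^*:x\in A\}$, and deduce openness of $\pi^*$ from the continuity of $A\mapsto(\pi^*)^{-1}(A)=A\times\{A\}$ via Theorem~\ref{thm-open}. The paper writes $Y^*=\overline{\{\Phi(y):y\in Y_c\}}$ explicitly while you pick an abstract minimal subset of an orbit closure in $2^X$, but these coincide since $Y^*$ is the unique minimal set in $\widetilde Y$.

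One small correction to your sketch of the minimality of $\{(x,A):x\in A\}$: the hint ``use minimality of $(Y^*,T_*)$ and a diagonalisation'' does not by itself reach every $(x_1,A_1)$ from a single orbit, because pushing $A_0$ toward $A_1$ only gives \emph{some} limit point in $A_1$, not a prescribed one. The clean argument uses minimality of $X$ instead: if $M$ is any nonempty closed invariant subset of $\{(x,A):x\in A\}$ then $\sigma(M)=X$, so for every $x$ with $\pi(x)\in Y_c$ there is $(x,A)\in M$, and necessarily $A=\Phi(\pi(x))$; hence $\Phi(y)\times\{\Phi(y)\}\subseteq M$ for all $y\in Y_c$, and density of $\{\Phi(y):y\in Y_c\}$ in $Y^*$ together with Hausdorff convergence gives $M=\{(x,A):x\in A\}$. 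This also yields uniqueness in $R_{\pi\tau}$, since any minimal $M\subseteq R_{\pi\tau}$ meets a continuity fibre and is therefore contained in $\{(x,A):x\in A\}$.
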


\medskip


We sketch the construction of these factors.
Let $\pi^{-1}: Y\rightarrow 2^X, y\mapsto \pi^{-1}(y)$. Then  $\pi^{-1}$ is a u.s.c. map, and the set $Y_c$ of
continuous points of $\pi^{-1}$ is a dense $G_\d$ subset of $Y$.
Let $$\widetilde{Y}=\overline{\{\pi^{-1}(y): y\in Y\}}\ \text{and}\ Y^*=\overline{\{\pi^{-1}(y): y\in Y_c\}},$$
where the closure is taken in $2^X$. It is obvious that $Y^*\subseteq \widetilde{Y}\subseteq 2^X$. Note that
for each $A\in \widetilde{Y}$, there is some $y\in Y$ such that $A\subseteq \pi^{-1}(y)$, and hence $A\mapsto y$
define a map $\tau: \widetilde{Y}\rightarrow Y$. It is easy to verify that $\tau: (\widetilde{Y},T)\rightarrow (Y,T)$
is a factor map. One can show that if $(Y,T)$ is minimal then $(Y^*,T)$ is a minimal t.d.s. and it is the unique minimal subsystem in $(\widetilde{Y},T)$, and $\tau: Y^*\rightarrow Y$ is an almost one to one extension such that $\tau^{-1}(y)=\{\pi^{-1}(y)\}$ for all $y\in Y_c$. It can be proved that $X^*=\{(x,\tilde y)\in X \times Y^* : x \in \tilde y\}$ is a minimal subset of $X\times Y^*$. Let $\sigma: X^*\rightarrow X$ and $\pi^*: X^*\rightarrow Y^*$ be the projections. One can show that $\sigma$ is almost one to one and $\pi^*$ is open.
See \cite[Subsection 2.3]{V77} for details.


\subsection{Piecewise syndetic subsets}\
\medskip

\begin{de}
Let $(S,\cdot)$ be a semigroup. A set $A\subseteq S$ is {\em piecewise syndetic} if and only if there exists some finite subset $K\subseteq S$ such that for every finite subset $F$ of $S$, there exists $s\in S$ such that
$$F\cdot s\subseteq \bigcup_{t\in K}t^{-1} A.$$
\end{de}

Recall that a subset $S$ of $\Z$ is {\it syndetic} if it has a bounded gap,
i.e. there is $N\in \N$ such that $\{i,i+1,\cdots,i+N\} \cap S \neq
\emptyset$ for every $i \in {\Z}$. $S$ is {\it thick} if it
contains arbitrarily long runs of integers. It is easy to verify that
a subset $S$ of $\Z$ is piecewise syndetic if and only if it is an
intersection of a syndetic set with a thick set.

\medskip

Recall that a t.d.s. is a {\em M-system} if it is transitive and the set of minimal points is dense.
The following proposition characterizes piecewise syndetic recurrence (see for example \cite[Lemma 2.1]{HY05} or \cite[Theorem 3.1]{Ak97}).

\begin{prop}\label{prop-M}
Let $T_1,T_2,\ldots, T_k: X\rightarrow X$ be commuting homeomorphisms ($k\in \N$). Let $G=\langle T_1,T_2,\ldots, T_d\rangle$ be the group generated by $T_1,T_2,\ldots, T_k$. Then $(X,G)$ is an $M$-system if and only if for each (some) transitive point $x\in X$ and any neighbourhood $U$ of $x$, $$N_G(x,U)=\{(n_1,n_2,\ldots, n_k)\in \Z^k: T_1^{n_1}\cdots T_k^{n_k}x\in U\}$$ is a piecewise syndetic subset of $\Z^k$.
\end{prop}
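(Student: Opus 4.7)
The plan is to establish the two implications separately; the equivalence of the ``each'' and ``some'' formulations will then follow automatically, since the forward direction will give the property for every transitive point.

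For the forward direction, I would take a transitive $x$ and a neighbourhood $U$ of $x$, and use density of minimal points to pick a minimal $y \in U$. Minimality forces $N_G(y,U) \subseteq \Z^k$ to be syndetic, so one fixes a finite $K \subseteq \Z^k$ with $K + N_G(y,U) = \Z^k$. For any finite $F \subseteq \Z^k$, the set $F^\ast := \{f+k : f \in F,\ k \in K,\ T^{f+k}y \in U\}$ is finite and covers each $f \in F$ via syndeticity. The open neighbourhood $W := \bigcap_{g \in F^\ast}(T^g)^{-1}(U)$ of $y$ meets the orbit of $x$ since $y \in X = \overline{G \cdot x}$, providing some $s$ with $T^s x \in W$. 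A direct check then yields $F + s \subseteq \bigcup_{k \in K}(N_G(x,U)-k)$, which is piecewise syndeticity of $N_G(x,U)$.

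For the backward direction, assume $x_0$ is a transitive point with $N_G(x_0,U)$ piecewise syndetic for every neighbourhood $U$ of $x_0$. Transitivity of $(X,G)$ is automatic; the real task is to place a minimal point inside an arbitrary nonempty open $V \subseteq X$. I would pick $g_0 \in \Z^k$ with $T^{g_0} x_0 \in V$, shrink to an open $V_0$ with $T^{g_0} x_0 \in V_0 \subseteq \overline{V_0} \subseteq V$, and set $U := (T^{g_0})^{-1}(V_0)$, an open neighbourhood of $x_0$. The key reformulation is the identity
\[ \bigcup_{k \in K}\bigl(N_G(x_0,U) - k\bigr) \;=\; N_G(x_0, V^\ast), \qquad V^\ast := \bigcup_{k \in K}(T^k)^{-1}(U), \]
which converts the piecewise syndeticity of $N_G(x_0,U)$ into thickness of $N_G(x_0,V^\ast)$.

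With thickness in hand I would exhaust $\Z^k$ by finite $F_n \uparrow \Z^k$, pick $s_n$ with $F_n + s_n \subseteq N_G(x_0,V^\ast)$, and extract a subsequence $T^{s_{n_j}} x_0 \to y$. The inclusion passes to the limit to give $\overline{G \cdot y} \subseteq \overline{V^\ast} = \bigcup_{k \in K}(T^k)^{-1}(\overline{U})$. Any minimal subset $M \subseteq \overline{G \cdot y}$ then contains some $z$ with $T^k z \in \overline{U}$ for some $k \in K$, and the $G$-invariance of $M$ makes $T^k z$ itself a minimal point in $M \cap \overline{U}$; applying $T^{g_0}$ lands it in $T^{g_0}\overline{U} = \overline{V_0} \subseteq V$. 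I expect the main obstacle to be finding this reformulation in the backward direction: once piecewise syndeticity has been repackaged as thickness of a return time set to the slightly enlarged open set $V^\ast$, the compactness-plus-minimal-subset argument runs in a standard fashion, but the repackaging itself is what makes the whole mechanism work.
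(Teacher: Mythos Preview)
The paper does not actually prove this proposition; it only cites \cite[Lemma 2.1]{HY05} and \cite[Theorem 3.1]{Ak97} for it. So there is no proof in the paper to compare yours against.

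That said, your argument is correct and is essentially the standard one. The forward direction is clean: the key point is that $y\in W$, so $W$ is nonempty open, and the syndetic witness $K$ for the minimal point $y$ becomes the piecewise-syndetic witness for the transitive point $x$. In the backward direction, your repackaging identity $\bigcup_{k\in K}(N_G(x_0,U)-k)=N_G(x_0,V^\ast)$ is exactly right, and the limit argument giving $\overline{G\cdot y}\subseteq \overline{V^\ast}$ is valid because the union defining $V^\ast$ is finite, so closure commutes with the union and with the continuous preimages. The final step, pushing the minimal point from $\overline{U}$ into $V$ via $T^{g_0}$, uses that $T^{g_0}$ is a homeomorphism so $T^{g_0}\overline{U}=\overline{V_0}\subseteq V$. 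Your handling of ``each'' versus ``some'' is also correct: proving (M-system $\Rightarrow$ each) and (some $\Rightarrow$ M-system) closes the loop since ``each $\Rightarrow$ some'' is trivial once transitivity guarantees a transitive point exists.
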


\begin{rem}
Note that Proposition \ref{prop-M} still hold if we replace $\Z$ by $\Z_+$.
\end{rem}

\subsection{Nilmanifolds and nilsystems}\

\subsubsection{}
Let $G$ be a group. For $g, h\in G$ and $A,B \subseteq G$, we write $[g, h] =
ghg^{-1}h^{-1}$ for the commutator of $g$ and $h$ and
$[A,B]$ for the subgroup spanned by $\{[a, b] : a \in A, b\in B\}$.
The commutator subgroups $G_j$, $j\ge 1$, are defined inductively by
setting $G_1 = G$ and $G_{j+1} = [G_j ,G]$. Let $d \ge 1$ be an
integer. We say that $G$ is {\em $d$-step nilpotent} if $G_{d+1}$ is
the trivial subgroup.

\subsubsection{}
Let $d\in\N$, $G$ be a $d$-step nilpotent Lie group and $\Gamma$ be a discrete
cocompact subgroup of $G$. The compact manifold $X = G/\Gamma$ is
called a {\em $d$-step nilmanifold}. The group $G$ acts on $X$ by
left translations and we write this action as $(g, x)\mapsto gx$.
The Haar measure $\mu$ of $X$ is the unique Borel probability measure on
$X$ invariant under this action. Fix $t\in G$ and let $T$ be the
transformation $x\mapsto t x$ of $X$, i.e. $t(g\Gamma)=(tg)\Gamma$ for each $g\in G$. Then $(X, \mu, T)$ is
called a {\em $d$-step nilsystem}. In the topological setting we omit the measure
and just say that $(X,T)$ is a $d$-step nilsystem. For more details on nilsystems, refer to \cite[Chapter 11]{HK18}.






\subsubsection{}
We will need to use inverse limits of nilsystems, so we recall the
definition of a sequential inverse limit of systems. If
$(X_i,T_i)_{i\in \N}$ are systems 
and $\pi_i: X_{i+1}\rightarrow X_i$ is a factor map for all $i\in\N$, the {\em inverse
limit} of these systems is defined to be the compact subset of
$\prod_{i\in \N}X_i$ given by $\{ (x_i)_{i\in \N }: \pi_i(x_{i+1}) =
x_i, i\in\N\}$, and we denote it by
$\lim\limits_{\longleftarrow}(X_i,T_i)_{i\in\N}$.
It is a compact metric space.
We note that the maps $\{T_i\}_{i\in \N}$ induce naturally a transformation $T$ on the inverse
limit such that $T(x_1,x_2,\ldots)=(T_1(x_1),T_2(x_2),\ldots)$.




\begin{de} [{\cite[Definition 1.2]{HKM}}] 
For $d\in \N$, a minimal t.d.s. $(X,T)$ is called  a {\em $d$-step pro-nilsystem} or {\it system of order $d$} if $X$ is an inverse limit of $d$-step minimal nilsystems.
\end{de}

\subsection{Regionally proximal relation
of order $d$}\
\medskip

\begin{de}[{\cite[Definition 3.2]{HKM}}]
Let $(X, T)$ be a t.d.s. and let $d\in \N$. The points $x, y \in X$ are
said to be {\em regionally proximal of order $d$} if for any $\d  >
0$, there exist $x', y'\in X$ and a vector ${\bf n} = (n_1,\ldots ,
n_d)\in\Z^d$ such that $\rho (x, x') < \d, \rho (y, y') <\d$, and $$
\rho (T^{{\bf n}\cdot \ep}x', T^{{\bf n}\cdot \ep}y') < \d\
\text{for every  $\ep=(\ep_1,\ldots,\ep_d)\in \{0,1\}^d\setminus\{(0,0,\ldots , 0)\}$},$$
where ${\bf n}\cdot \ep=\sum_{i=1}^d n_i\ep_i$.
The set of regionally proximal pairs of
order $d$ is denoted by $\RP^{[d]}$ (or by $\RP^{[d]}(X,T)$ in case of
ambiguity), and is called {\em the regionally proximal relation of
order $d$}.
\end{de}


The following theorems were proved in \cite{HKM} (for minimal distal systems) and
in \cite{SY} (for general minimal systems).

\begin{thm}\label{thm-1}
Let $(X, T)$ be a minimal t.d.s. and let $d\in \N$. Then
\begin{enumerate}

\item $\RP^{[d]}$ is an equivalence relation;

\item $(X,T)$ is a $d$-step pro-nilsystem if and only if $\RP^{[d]}=\Delta_X$.
\end{enumerate}
\end{thm}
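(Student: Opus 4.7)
The plan is to characterize $\RP^{[d]}$ via dynamical cube systems. For $d \geq 1$, introduce the \emph{face group} $\mathcal{F}^{[d]}$ as the subgroup of $T^{(2^d)}$ acting on $X^{2^d}$ generated by the $d$ face transformations (the $i$-th acting as $T$ on coordinates indexed by $\epsilon \in \{0,1\}^d$ with $\epsilon_i = 1$ and trivially on the rest) together with the full diagonal $T \times \cdots \times T$. Let $\mathbf{Q}^{[d]}(X) \subseteq X^{2^d}$ be the orbit closure of the diagonal $\Delta_{2^d}(X)$ under $\mathcal{F}^{[d]}$. Unwinding the definition of $\RP^{[d]}$ and approximating $\mathbf{n}\cdot\epsilon$-iterates by elements of $\mathcal{F}^{[d]}$, one obtains the characterization: $(x,y) \in \RP^{[d]}$ if and only if the tuple having $x$ in the coordinate $\epsilon = (0,\ldots,0)$ and $y$ in all other $2^d-1$ coordinates lies in $\mathbf{Q}^{[d]}(X)$.

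For part (1), reflexivity is immediate from $\Delta_{2^d}(X) \subseteq \mathbf{Q}^{[d]}(X)$, and symmetry follows from the natural permutation invariance of $\mathbf{Q}^{[d]}(X)$ under the automorphisms of the cube $\{0,1\}^d$. Transitivity is the crux: given witnessing cubes $\mathbf{p}, \mathbf{q} \in \mathbf{Q}^{[d]}(X)$ for $(x,y)$ and $(y,z)$ respectively, sharing a common $y$-face, one glues them along that face to build an element of $\mathbf{Q}^{[d+1]}(X)$, then projects onto an appropriate $d$-dimensional subface to obtain a witnessing cube for $(x,z)$. The gluing step requires a minimality statement for $\mathcal{F}^{[d]}$ on the orbit closures inside $\mathbf{Q}^{[d]}(X)$.

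For part (2), the forward direction is computational: if $X = G/\Gamma$ is a $d$-step nilmanifold with $T(g\Gamma) = tg\Gamma$, then expanding $t^{\mathbf{n}\cdot\epsilon}$ in terms of iterated commutators and using $G_{d+1} = \{e\}$ forces any $\RP^{[d]}$-pair to coincide; inverse limits preserve the property. For the converse, form the quotient $Y = X/\RP^{[d]}$, which by part (1) is a well-defined minimal t.d.s. with $\RP^{[d]}(Y) = \Delta_Y$. An induction on $d$ (using $\RP^{[1]}$ as the equicontinuous structure relation at the base case) shows $Y$ is distal, and then one invokes the HKM structure theorem in the distal setting to conclude $Y$ is a $d$-step pro-nilsystem.

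The principal obstacle is the non-distal case in both parts. For distal minimal systems the nilpotent structure of the Ellis enveloping group drives the cube gluing and the structure theorem, but this tool is unavailable in general minimal t.d.s. The strategy, following SY, is to pass to an open almost one-to-one extension via the O-diagram (Theorem \ref{O-diagram}), where Theorem \ref{thm-open} provides continuity of the fiber map and so allows the cube-space operations to be lifted without losing regional proximality witnesses and then descended back. The fiberwise continuity arguments needed for this descent invoke Theorem \ref{thm-Fort} to extract residual sets where the relevant upper- and lower-semicontinuous set-valued maps are jointly continuous. A secondary difficulty in part (2) is ensuring that $X/\RP^{[d]}$ is Hausdorff and distal in the non-distal case, which again hinges on compatibility of the cube construction with the open extension produced by the O-diagram.
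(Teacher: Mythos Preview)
The paper does not prove this theorem at all: it is stated in the preliminaries as a known result, with the sentence ``The following theorems were proved in \cite{HKM} (for minimal distal systems) and in \cite{SY} (for general minimal systems)'' immediately preceding it. There is therefore no proof in the paper to compare your proposal against.

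That said, your sketch is a reasonable high-level summary of the HKM/SY strategy via cube systems $\mathbf{Q}^{[d]}(X)$. A couple of points of caution if you intend to flesh it out. First, the heart of the SY argument for general minimal systems is not really the O-diagram lift you describe; rather, SY work directly with the enveloping semigroup and show that $(x,y)\in\RP^{[d]}$ if and only if $(x,y,y,\ldots,y)\in\overline{\mathcal{F}^{[d+1]}}(x,x,\ldots,x)$, from which transitivity and the quotient structure follow via an algebraic analysis of minimal idempotents. Invoking Theorems~\ref{O-diagram}, \ref{thm-open}, and \ref{thm-Fort} from this paper is anachronistic and does not match the actual route. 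Second, your claim that ``an induction on $d$ \ldots\ shows $Y$ is distal'' in part~(2) is not how the argument runs: one does not first establish distality of $X/\RP^{[d]}$ and then invoke a distal structure theorem; instead the pro-nilsystem structure of the quotient is obtained directly from the triviality of $\RP^{[d]}$ on it, together with the HKM characterization of systems of order $d$.
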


The regionally proximal relation of order $d$ allows us to construct the maximal $d$-step
pro-nilfactor of a system.

\begin{thm}\label{thm0}
Let $\pi: (X,T)\rightarrow (Y,T)$ be a factor map between minimal t.d.s.
and let $d\in \N$. Then
\begin{enumerate}
  \item $\pi\times \pi (\RP^{[d]}(X,T))=\RP^{[d]}(Y,T)$;
  \item $(Y,T)$ is a $d$-step pro-nilsystem if and only if $\RP^{[d]}(X,T)\subseteq R_\pi$.
\end{enumerate}
In particular, $X_d\triangleq X/\RP^{[d]}(X,T)$, the quotient of $(X,T)$ under $\RP^{[d]}(X,T)$, is the
maximal $d$-step pro-nilfactor of $X$. 
\end{thm}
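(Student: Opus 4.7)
The plan is to prove (1) first and then deduce (2) together with the in particular clause from (1) and Theorem \ref{thm-1}. The easy inclusion $\pi\times\pi(\RP^{[d]}(X,T))\subseteq\RP^{[d]}(Y,T)$ is routine: given $(x_1,x_2)\in \RP^{[d]}(X,T)$ and $\delta'>0$, pick $\delta>0$ small enough that the image under $\pi$ of any $\delta$-ball in $X$ has diameter less than $\delta'$ (uniform continuity of $\pi$); any $\delta$-witness $(x_1',x_2',{\bf n})$ from the definition of $\RP^{[d]}(X,T)$ pushes forward under $\pi$ to a $\delta'$-witness for $(\pi(x_1),\pi(x_2))\in\RP^{[d]}(Y,T)$.

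The reverse inclusion $\RP^{[d]}(Y,T)\subseteq \pi\times\pi(\RP^{[d]}(X,T))$ is where I expect the main obstacle to lie. Given $(y_1,y_2)\in\RP^{[d]}(Y,T)$ with $\delta_k$-witnesses $(y_1^{(k)},y_2^{(k)},{\bf n}_k)$, the natural idea is to lift the witnesses to $X$ and extract a subsequential limit, but when $\pi$ fails to be open the preimage map $\pi^{-1}$ is only upper semi-continuous, so lifts of $y_i^{(k)}\to y_i$ need not converge in $X$. To bypass this I would pass to the O-diagram of Theorem \ref{O-diagram}, replacing $\pi$ by an open extension $\pi^*\colon X^*\to Y^*$ sandwiched between almost one to one extensions $\sigma\colon X^*\to X$ and $\tau\colon Y^*\to Y$. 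For the open map $\pi^*$, Theorem \ref{thm-open} makes $(\pi^*)^{-1}$ continuous, so lifts of the approximating sequences converge after passing to a subsequence and a routine compactness argument proves (1) for $\pi^*$. To transfer (1) from $\pi^*$ back to $\pi$, I would use the standard facts that an almost one to one extension of minimal t.d.s.\ is a proximal extension and that every proximal pair in a minimal t.d.s.\ belongs to $\RP^{[d]}$; combined with the fact that $\RP^{[d]}$ is an equivalence relation (Theorem \ref{thm-1}(1)) these facts yield $\RP^{[d]}(X^*,T)=(\sigma\times\sigma)^{-1}(\RP^{[d]}(X,T))$ and $\RP^{[d]}(Y^*,T)=(\tau\times\tau)^{-1}(\RP^{[d]}(Y,T))$, which is enough to transport the identity (1) from $\pi^*$ down to $\pi$.

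Granting (1), part (2) is immediate. If $\RP^{[d]}(X,T)\subseteq R_\pi$ then $\pi\times\pi(\RP^{[d]}(X,T))\subseteq \Delta_Y$, so by (1) $\RP^{[d]}(Y,T)=\Delta_Y$ and Theorem \ref{thm-1}(2) identifies $(Y,T)$ as a $d$-step pro-nilsystem; conversely, if $(Y,T)$ is a $d$-step pro-nilsystem then $\RP^{[d]}(Y,T)=\Delta_Y$, and the easy direction of (1) gives $\pi\times\pi(\RP^{[d]}(X,T))\subseteq\Delta_Y$, i.e.\ $\RP^{[d]}(X,T)\subseteq R_\pi$. For the in particular clause, the quotient $X_d=X/\RP^{[d]}(X,T)$ is a well-defined minimal t.d.s.\ by Theorem \ref{thm-1}(1); applying (1) to the quotient map $\pi_d\colon X\to X_d$ yields $\RP^{[d]}(X_d,T)=\pi_d\times\pi_d(\RP^{[d]}(X,T))=\Delta_{X_d}$, so Theorem \ref{thm-1}(2) makes $X_d$ a $d$-step pro-nilsystem; and any other $d$-step pro-nilfactor $(Y,T)$ of $X$ satisfies $\RP^{[d]}(X,T)\subseteq R_\pi$ by (2) and hence factors through $X_d$, establishing maximality.
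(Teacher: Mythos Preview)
The paper does not prove this theorem; it is quoted as background from \cite{HKM} and \cite{SY}, so there is no paper's-own-proof to compare against. Your deductions of (2) and of the ``in particular'' clause from (1) and Theorem~\ref{thm-1} are correct, as is the easy inclusion in (1).

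The gap is in the hard inclusion of (1), specifically your claim that for an open extension $\pi^*$ ``a routine compactness argument'' finishes the job. Openness lets you lift the base points $y_i^{(k)}$ of the witnesses to points $x_i^{(k)}\to x_i$ over prescribed lifts $x_i$ of $y_i$. But the definition of $\RP^{[d]}$ also demands that $\rho_{X^*}\big(T^{{\bf n}_k\cdot\ep}x_1^{(k)}, T^{{\bf n}_k\cdot\ep}x_2^{(k)}\big)$ be small for every nonzero $\ep\in\{0,1\}^d$, and all the hypothesis gives you is that these two points lie in nearby \emph{fibres} of $\pi^*$. Fibres can have large diameter, and the sequences $T^{{\bf n}_k\cdot\ep}y_i^{(k)}$ typically do not converge in $Y^*$, so continuity of $(\pi^*)^{-1}$ gives no control over which point of the fibre $T^{{\bf n}_k\cdot\ep}x_i^{(k)}$ actually is. Passing to a convergent subsequence of the $2^{d+1}$-tuples does not help either: the limiting configuration need not be a valid $\RP^{[d]}$ witness for any pair over $(y_1,y_2)$. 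The actual proofs in \cite{HKM,SY} do not lift witnesses directly; they rely on an alternative characterisation of $\RP^{[d]}$ via the dynamical parallelepiped structure $\mathbf{Q}^{[d+1]}$ and the minimality of the face-group action on it, and the lifting is carried out through enveloping-semigroup/minimal-idempotent machinery rather than a pointwise compactness argument. If you want to keep the O-diagram reduction you still owe a genuine proof in the open case.
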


\subsection{$\infty$-step pro-nilsystems}\
\medskip

By Theorem \ref{thm-1} for any minimal t.d.s. $(X,T)$,
$$\RP^{[\infty]}=\bigcap\limits_{d\ge 1} \RP^{[d]}$$
is a closed invariant equivalence relation (we write $\RP^{[\infty]}(X,T)$ in case of ambiguity). Now we formulate the
definition of $\infty$-step pro-nilsystems. 

\begin{de}[{\cite[Definition 3.4]{D-Y}}]
A minimal t.d.s. $(X, T)$ is an {\em $\infty$-step
pro-nilsystem} or {\em a system of order $\infty$}, if the equivalence
relation $\RP^{[\infty]}$ is trivial, i.e., coincides with the
diagonal.
\end{de}

\begin{rem}
\begin{enumerate}
  \item Similar to Theorem \ref{thm0}, one can show that the quotient of a
minimal system $(X,T)$ under $\RP^{[\infty]}$ is the maximal
$\infty$-step pro-nilfactor of $(X,T)$. We denote the maximal
$\infty$-step pro-nilfactor of $(X,T)$ by $X_\infty$.

  \item A minimal system is an $\infty$-step pro-nilsystem if and only if it is
an inverse limit of minimal nilsystems \cite[Theorem 3.6]{D-Y}.

\end{enumerate}
\end{rem}

\subsection{Symbolic dynamics}

Let $\Sigma$ be a finite alphabet with $m$ symbols, $m \ge 2$. We usually
suppose that $\Sigma=\{0,1,\cdots,m-1\}$. Let $\Omega=\Sigma^{\Z}$ be the
set of all sequences $x=\ldots x_{-1}x_0x_1 \ldots=(x_i)_{i\in \Z}$, $x_i \in
\Sigma$, $i \in \Z$, with the product topology. A metric compatible is
given by $d(x,y)=\frac{1}{1+k}$, where $k=\min \{|n|:x_n \not= y_n
\}$, $x,y \in \Omega$. The shift map $\sigma: \Omega \longrightarrow \Omega$
is defined by $(\sigma x)_n = x_{n+1}$ for all $n \in \Z$. The
pair $(\Omega,\sigma)$ is called a {\em shift dynamical system}. Any subsystem of $(\Omega, \sigma)$
is called a {\em subshift system}.
Similarly we can replace $\Z$ by $\Z_+=\{0,1,2,\ldots \}$, and $\sigma$ will be not a
homeomorphism but a surjective map.

Each element of $\Sigma^{\ast}= \bigcup_{k \ge 1} \Sigma^k$ is called {\em a word} or {\em a block} (over $\Sigma$).
We use $|A|=n$ to denote the length of $A$ if $A=a_1\ldots a_n$.
If $\omega=(\cdots \omega_{-1} \omega_0 \omega_1 \cdots) \in \Omega$ and $a \le b \in \Z$, then
$\omega[a,b]=:\omega_{a} \omega_{a+1} \cdots \omega_{b}$ is a $(b-a+1)$-word occurring in
$\omega$ starting at place $a$ and ending at place $b$.
Similarly we define $A[a,b]$ when $A$ is a word. A word $A$ {\em appears} (or {\em occurs}) in the word $B$ if there are some $a\le b$ such that
$B[a,b]=A$.

For $n\in\N$ and words $A_1,\ldots, A_n$, we denote by $A_1\ldots A_n$ the concatenation of $A_1,\ldots, A_n$.
When $A_1=\cdots=A_n=A$ denote $A_1\ldots A_n$ by $A^n$. 
If $(X,\sigma)$ is a subshift system, let $[i]=[i]_X=\{x\in X:x(0)=i\}$ for $i\in \Sigma$, and
$[A]=[A]_X=\{x\in X:x_0 x_1\cdots x_{(|A|-1)}=A\}$ for any word $A$.

\section{Disjointness and Saturation theorems for polynomials}\label{section-saturate}

In this section, we introduce two important ingredients in the proof of the main results.

\subsection{Disjointness}\
\medskip

The notion of {\em disjointness} of two dynamical systems was introduced by
Furstenberg in his seminal paper \cite{F67} both in ergodic theory and topological dynamics.
This notion plays an important role in ergodic system, see \cite{G} for instance.

Let $(X,T)$ and $(Y,S)$ be
two t.d.s. We say that $J\subset X\times Y$ is a {\em joining} of $X$
and $Y$ if $J$ is a nonempty closed invariant set, and it projects
onto $X$ and $Y$ respectively. If each joining is equal to $X\times
Y$ then we say that $(X,T)$ and $(Y,S)$ are {\em disjoint}, denoted
by $(X,T)\perp (Y,S)$ or $X\perp Y$. Note that if $(X,T)\perp (Y,S)$
then one of them is minimal \cite{F67}, and if in addition $(Y,S)$ is minimal then
the set of recurrent points of $(X,T)$ is dense in $X$ \cite{HY05}.

In \cite{F67}, Furstenberg showed that each totally transitive system
with dense set of periodic points is disjoint from all minimal
systems; and each weakly mixing system is disjoint from all minimal
distal systems. He left the following question
\cite[Problem G]{F67}: Describe the system who is disjoint
from all minimal systems.
In \cite{HSY20}, the authors provide a resolution to Furstenberg's problem concerning transitive systems.

\subsection{Saturation theorems}\
\medskip

Let $X, Y$ be sets, and let $\phi : X\rightarrow Y$ be a map. A subset $L$ of $X$ is called
{\em $\phi$-saturated} if $$\{x\in L: \phi^{-1}(\phi(x))\subseteq L\}=L,$$ i.e., $L=\phi^{-1}(\phi(L))$.

Let $(X,T)$ be a t.d.s. and $d\in \N$. Let $\A=\{p_1,\ldots, p_d\}$ be a family of integral polynomials and define
$$\O_\A((x_1,\ldots,x_d))=\O_{\{p_1,\ldots,p_d\}}((x_1,\ldots,x_d))=\{(T^{p_1(n)}x_1, \ldots, T^{p_d(n)}x_d): n\in \Z \},$$
and
$$\overline{\O}_\A((x_1,\ldots,x_d))=\overline{\{(T^{p_1(n)}x_1, \ldots, T^{p_d(n)}x_d): n\in \Z \}}.$$

\medskip

By the proof of \cite[Theorem 3.6]{HSY22-1}, we have

\begin{thm}\label{thm-poly-sat1}
Let $(X,T)$ be a minimal t.d.s., and $\pi:X\rightarrow X_\infty$ be the factor map from $X$ to its maximal $\infty$-step pro-nilfactor $X_\infty$.
Let $\pi:(X,T)\rightarrow (Y,T)$ be an extension of minimal t.d.s. such that $\pi$ is open and $X_{\infty}$ is a factor of $Y$.
$$
\xymatrix{
                &         X \ar[d]^{\pi} \ar[dl]_{\pi_\infty}    \\
  X_{\infty} & Y   \ar[l]_{\phi}          }
$$
Then there is a $T$-invariant residual subset $X_0$ of $X$ having the following property:  for all $x\in X_0$, all sets of essentially distinct
non-constant integral polynomials $\A=\{p_1,\ldots, p_d\}$ and $d\in \N$, $\overline{\O}_\A(x^{\otimes d})$ is ${\pi}^{(d)}$-saturated, that is,
$$\overline{\O}_\A(x^{\otimes d})=({\pi}^{(d)})^{-1}\Big({\pi}^{(d)}(\overline{\O}_\A(x^{\otimes d}))\Big).$$
\end{thm}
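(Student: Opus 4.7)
The plan is to deduce Theorem \ref{thm-poly-sat1} from the classical polynomial saturation theorem at the level of the maximal $\infty$-step pro-nilfactor $X_\infty$, via a soft observation about intermediate factors. Since $\pi_\infty = \phi \circ \pi$, the fibers of $\pi^{(d)}$ sit inside those of $\pi_\infty^{(d)}$, giving
\begin{equation*}
L \;\subseteq\; (\pi^{(d)})^{-1}(\pi^{(d)}(L)) \;\subseteq\; (\pi_\infty^{(d)})^{-1}(\pi_\infty^{(d)}(L))
\end{equation*}
for every $L \subseteq X^d$. Hence $\pi_\infty^{(d)}$-saturation of $L$ forces $\pi^{(d)}$-saturation. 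It therefore suffices to produce a $T$-invariant residual $X_0 \subseteq X$ for which $\overline{\O}_\A(x^{\otimes d})$ is $\pi_\infty^{(d)}$-saturated at every $x \in X_0$ and every essentially distinct, non-constant integral family $\A$.

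The set $X_0$ is obtained by a Baire category argument. Enumerate the countably many finite families of essentially distinct non-constant integral polynomials as $\{\A_k\}_{k \ge 1}$, with $d_k = |\A_k|$. For each $k$, the orbit-closure map $F_k: x \mapsto \overline{\O}_{\A_k}(x^{\otimes d_k})$ is lower semi-continuous from $X$ into $2^{X^{d_k}}$, so by Theorem \ref{thm-Fort} its set of continuity points $C_k$ is a dense $G_\delta$ in $X$. Take $X_0 = \bigcap_{n \in \Z} T^n \bigl( \bigcap_{k \ge 1} C_k \bigr)$, which is clearly $T$-invariant and residual.

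The core verification that $\overline{\O}_{\A}(x^{\otimes d})$ is $\pi_\infty^{(d)}$-saturated at each $x \in X_0$ follows the proof of \cite[Theorem 3.6]{HSY22-1}. The argument combines: (i) continuity of $F_k$ at $x$ together with openness of $\pi$ (via Theorem \ref{thm-open}, which supplies continuity of $\pi^{-1}$), allowing one to transport fiber elements between nearby orbit closures; (ii) Leibman's theorem on polynomial orbit closures in nilsystems, to describe $\pi_\infty^{(d)}(\overline{\O}_\A(x^{\otimes d}))$ explicitly as a sub-nilmanifold of $X_\infty^d$; and (iii) a Furstenberg-disjointness argument applied to the system induced above $X_\infty$, exploiting that essentially distinct non-constant polynomials produce enough independence for the orbit to sweep out each $\pi_\infty^{(d)}$-fiber.

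The main obstacle is step (iii): showing that the polynomial orbit of $x^{\otimes d}$ actually fills each entire $\pi_\infty^{(d)}$-fiber. This is the technical heart carried out in \cite{HSY22-1, Qiu}, resting on the disjointness between $\infty$-step pronilsystems and the auxiliary (effectively shift-type) systems that arise from coding the extension $\pi_\infty$. Openness of $\pi$ plays an essential role throughout, since it ensures that fibers depend continuously on the base point, so that the Baire category and disjointness inputs combine without pathological jumps; with these pieces in hand, the reduction in the first paragraph finishes the proof.
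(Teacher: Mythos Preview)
The paper gives no proof of Theorem \ref{thm-poly-sat1}; it simply records that the statement is what the argument of \cite[Theorem 3.6]{HSY22-1} actually establishes. Your proposal ultimately defers to the same reference, so at that level it matches the paper.

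However, your opening paragraph contains a genuine misstep. The implication
\[
L \text{ is } \pi_\infty^{(d)}\text{-saturated} \;\Longrightarrow\; L \text{ is } \pi^{(d)}\text{-saturated}
\]
is correct, but it reduces the theorem to a \emph{stronger} claim, not a weaker one: the $\pi_\infty$-fibers are unions of $\pi$-fibers, so $\pi_\infty$-saturation is the harder property. The proof in \cite{HSY22-1} establishes $\pi^{(d)}$-saturation for the open map $\pi:X\to Y$ directly, and openness of $\pi$ is used precisely there (continuity of $y\mapsto\pi^{-1}(y)$ via Theorem \ref{thm-open} is what lets one move within $Y$-fibers, not $X_\infty$-fibers). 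Nothing in that argument yields $\pi_\infty^{(d)}$-saturation on $X$ itself; indeed $\pi_\infty$ need not be open, and obtaining saturation over $X_\infty$ is exactly why the subsequent theorem invokes the O-diagram to pass to almost one-to-one modifications $X^*\to X_\infty^*$ where the extension \emph{is} open. Your reduction therefore targets a statement that is neither what Theorem \ref{thm-poly-sat1} asserts nor what the cited proof delivers, and it makes the openness hypothesis on $\pi$ look idle when in fact it is the crux.

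If you delete the first paragraph entirely and run your Baire-category step together with the pointer to \cite{HSY22-1} for the actual saturation argument (applied to the open map $\pi$, not to $\pi_\infty$), you recover exactly the paper's treatment.
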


Using this result and O-diagram, one has the following Theorem.

\begin{thm}[{\cite[Theorem 3.6]{HSY22-1}}]
Let $(X,T)$ be a minimal t.d.s., and $\pi:X\rightarrow X_\infty$ be the factor map from $X$ to its maximal $\infty$-step pro-nilfactor $X_\infty$.
Then there are minimal t.d.s. $X^*$ and $X_\infty^*$ which are almost one to one
extensions of $X$ and $X_\infty$ respectively, an open factor map $\pi^*$ and a commuting diagram below
\[
\begin{CD}
X @<{\varsigma^*}<< X^*\\
@VV{\pi}V      @VV{\pi^*}V\\
X_\infty @<{\varsigma}<< X_\infty^*
\end{CD}
\]
such that there is a $T$-invariant residual subset $X^*_0$ of $X^*$ having the following property:  for all $x\in X^*_0$, all sets of essentially distinct
non-constant integral polynomials $\A=\{p_1,\ldots, p_d\}$ and $d\in \N$, $\overline{\O}_\A(x^{\otimes d})$ is ${\pi^*}^{(d)}$-saturated.

If in addition $\pi$ is open, then $X^*=X$, $X_\infty^*=X_\infty$ and $\pi^*=\pi$.
\end{thm}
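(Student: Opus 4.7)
The plan is to combine the O-diagram (Theorem~\ref{O-diagram}) with the saturation result Theorem~\ref{thm-poly-sat1}. First, I would apply Theorem~\ref{O-diagram} to the canonical factor map $\pi: X \to X_\infty$ to obtain minimal systems $X^*$ and $X_\infty^*$, almost one to one extensions $\sigma^*: X^* \to X$ and $\tau: X_\infty^* \to X_\infty$, and an open factor map $\pi^*: X^* \to X_\infty^*$ satisfying $\pi \circ \sigma^* = \tau \circ \pi^*$. Then I would apply Theorem~\ref{thm-poly-sat1} to $(X^*, T)$ with $Y = X_\infty^*$ and the open map $\pi^*$, which would produce the required $T$-invariant residual subset $X^*_0 \subseteq X^*$ together with the $(\pi^*)^{(d)}$-saturation property of $\overline{\O}_\A(x^{\otimes d})$ for $x \in X^*_0$.

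The key step is to verify the hypothesis of Theorem~\ref{thm-poly-sat1}, namely that the maximal $\infty$-step pro-nilfactor $(X^*)_\infty$ of $X^*$ is a factor of $X_\infty^*$. I would establish this through the isomorphism $(X^*)_\infty \cong X_\infty$, together with the fact that $X_\infty$ is a factor of $X_\infty^*$ via $\tau$. For the isomorphism: since $\sigma^*$ is almost one to one between minimal systems, it is a proximal extension, hence $R_{\sigma^*}$ is contained in the proximal relation of $X^*$, which is in turn contained in $\RP^{[\infty]}(X^*)$ by the standard theory of $\RP^{[d]}$. This containment allows $\sigma^*$ to factor through the quotient map $X^* \to (X^*)_\infty$, exhibiting $(X^*)_\infty$ as a factor of $X$; by maximality of $X_\infty$ among pro-nil factors of $X$, we obtain a factor map $X_\infty \to (X^*)_\infty$. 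Conversely, $X_\infty$ is a factor of $X^*$ via $\pi \circ \sigma^*$ and is itself $\infty$-step pro-nil, so it factors through $(X^*)_\infty$. The two factor maps are mutually inverse, giving $(X^*)_\infty \cong X_\infty$.

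For the final clause, if $\pi$ itself is already open then by Theorem~\ref{thm-open} the map $y \mapsto \pi^{-1}(y)$ is continuous, and the construction in Theorem~\ref{O-diagram} degenerates to the trivial lift $X^* = X$, $X_\infty^* = X_\infty$, $\sigma^* = \tau = \id$, $\pi^* = \pi$; equivalently, one invokes Theorem~\ref{thm-poly-sat1} directly with $Y = X_\infty$. The main obstacle I anticipate is the inclusion $R_{\sigma^*} \subseteq \RP^{[\infty]}(X^*)$ used above, which rests on the non-trivial fact that proximal pairs in minimal systems are regionally proximal of every order; once this is granted, the remainder of the proof is diagram chasing combined with the two cited theorems, so the overall argument is relatively short.
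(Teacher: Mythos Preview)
Your proposal is correct and follows precisely the approach indicated in the paper, which simply says ``Using this result and O-diagram, one has the following Theorem'': apply Theorem~\ref{O-diagram} to $\pi: X \to X_\infty$ and then Theorem~\ref{thm-poly-sat1} to the resulting open map $\pi^*: X^* \to X_\infty^*$. Your verification that $(X^*)_\infty$ is a factor of $X_\infty^*$ via the proximality of the almost one to one extension $\varsigma^*$ and the inclusion ${\bf P}(X^*,T)\subseteq \RP^{[\infty]}(X^*,T)$ (which holds because $\infty$-step pro-nilsystems are distal) fills in exactly the detail the paper leaves implicit.
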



Let $(X,T)$ be a t.d.s. and $\A=\{p_1,\ldots, p_d\}$ be a family of integral polynomials. Put
\begin{equation}\label{nn}
  N_\A(X)=\overline{\bigcup_{x\in X}\O_{\A}(x^{\otimes d})}=\overline{\{(T^{p_1(n)}x, \ldots, T^{p_d(n)}x): x\in X, n\in \Z \}}.
\end{equation}
The special case of the following theorem when $\A=\{n,2n, \ldots, dn\}$ has been considered in \cite[Lemma 3.3]{GHSWY}.

By the proof of \cite[Theorem 3.10]{HSY22-1}, we have

\begin{thm}\label{thm-poly-sat2}
Let $(X,T)$ be a minimal t.d.s., and $\pi:X\rightarrow X_\infty$ be the factor map from $X$ to its maximal $\infty$-step pro-nilfactor $X_\infty$.
Let $\pi:(X,T)\rightarrow (Y,T)$ be an extension of minimal t.d.s. such that $\pi$ is open and $X_{\infty}$ is a factor of $Y$.
$$
\xymatrix{
                &         X \ar[d]^{\pi} \ar[dl]_{\pi_\infty}    \\
  X_{\infty} & Y   \ar[l]_{\phi}          }
$$
Then
for all sets of essentially distinct non-constant integral polynomials $\A=\{p_1,\ldots, p_d\}$, $N_\A(X)$ is ${\pi}^{(d)}$-saturated, that is,
$$N_\A(X)=({\pi}^{(d)})^{-1}\Big({\pi}^{(d)}(N_\A(X))\Big)=({\pi}^{(d)})^{-1}
\Big(N_\A(X_\infty)\Big).$$
\end{thm}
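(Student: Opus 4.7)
The plan is to bootstrap Theorem~\ref{thm-poly-sat2} from the pointwise result Theorem~\ref{thm-poly-sat1} by an approximation-and-closure argument. First, apply Theorem~\ref{thm-poly-sat1} to obtain a $T$-invariant residual subset $X_0 \subseteq X$ such that for every $x \in X_0$, the orbit closure $\overline{\O}_\A(x^{\otimes d})$ is $\pi^{(d)}$-saturated. Since $(X,T)$ is minimal, $X_0$ is dense in $X$, so by continuity of each $T^{p_j(n)}$, every point of the form $(T^{p_1(n)}x,\ldots,T^{p_d(n)}x)$ with $x \in X$ is a limit of analogous points with $x_k \in X_0$. Combining this with the definition~\eqref{nn} yields
\begin{equation*}
N_\A(X) = \overline{\bigcup_{x \in X_0}\overline{\O}_\A(x^{\otimes d})}.
\end{equation*}

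Next, an arbitrary union of $\pi^{(d)}$-saturated subsets of $X^d$ is $\pi^{(d)}$-saturated directly from the definition $L = (\pi^{(d)})^{-1}(\pi^{(d)}(L))$. The crucial step is to verify that the closure of a $\pi^{(d)}$-saturated set is again $\pi^{(d)}$-saturated. Here openness of $\pi$ enters decisively: by Theorem~\ref{thm-open}, $\pi^{-1} \colon Y \to 2^X$ is continuous, and consequently $(\pi^{(d)})^{-1} \colon Y^d \to 2^{X^d}$ is continuous as well. Given a $\pi^{(d)}$-saturated $L$, a limit point $x \in \overline{L}$, and any $y$ in the same fibre as $x$, one approximates $x$ by $x_k \in L$; Hausdorff-continuity of the fibre map then produces $y_k \in (\pi^{(d)})^{-1}(\pi^{(d)}(x_k)) \subseteq L$ with $y_k \to y$, so $y \in \overline{L}$. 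Applying this to $N_\A(X)$ shows it is $\pi^{(d)}$-saturated.

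Finally, the identification $\pi^{(d)}(N_\A(X)) = N_\A(X_\infty)$ follows from surjectivity and $T$-equivariance of $\pi$ applied to the defining formula in~\eqref{nn}, giving the second equality in the statement. The main obstacle in this argument is the closure-preserves-saturation step; it fails for general factor maps and is precisely what forces the open extension hypothesis on $\pi$ (the passage through the O-diagram in Theorem~\ref{O-diagram} is what makes this hypothesis accessible in the first place). All other steps are either formal manipulations or standard density arguments.
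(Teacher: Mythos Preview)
Your argument is correct. The paper itself does not supply a proof of this theorem; it simply records the statement as following ``by the proof of \cite[Theorem~3.10]{HSY22-1}'', so a line-by-line comparison is not possible. Your bootstrap from Theorem~\ref{thm-poly-sat1} is a natural and valid route: density of $X_0$ gives $N_\A(X)=\overline{\bigcup_{x\in X_0}\overline{\O}_\A(x^{\otimes d})}$, arbitrary unions preserve $\pi^{(d)}$-saturation trivially, and the closure step is exactly where openness of $\pi$ is used, via continuity of the fibre map $y\mapsto(\pi^{(d)})^{-1}(y)$ (equivalently, for open $\pi$ one has $\overline{(\pi^{(d)})^{-1}(M)}=(\pi^{(d)})^{-1}(\overline{M})$ for any $M\subseteq Y^d$). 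The final identification of the image follows from surjectivity, equivariance, and the fact that a continuous image of a closure equals the closure of the image on compact spaces.

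One small remark: the paper's statement writes $N_\A(X_\infty)$ in the last display, but with $\pi\colon X\to Y$ as in the diagram the image $\pi^{(d)}(N_\A(X))$ lands in $Y^d$, so the natural target is $N_\A(Y)$; your computation in fact yields $N_\A(Y)$, and the appearance of $X_\infty$ is an artefact of the paper's overloaded notation for $\pi$.
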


\section{The induced systems $N_\infty(X,\A)$ and $M_\infty(X,\A)$ for polynomials}\label{Section-Def-M}

In this section, for a given finite set $\A$ of integral polynomials and a t.d.s. $(X,T)$, we define $\Z^2$- t.d.s.
$N_\infty(X,\A)$ and $M_\infty(X,\A)$, which were introduced in \cite{HSY22-1} and \cite{HSY-new}.

\subsection{The definition of $N_\infty(X,\A)$ and  $M_\infty(X,\A)$}\

\subsubsection{Notation}

Let $d\in\N$ and $\A=\{p_1, p_2,\ldots, p_d\}$ be a family of integral polynomials with $p_i(0)=0$, $1\le i\le d$.
A point of $(X^d)^{\Z}$ is denoted by
$${\bf x}=({\bf x}_n)_{n\in {\Z}}=\Big((x^{(1)}_n, x^{(2)}_n,\ldots, x^{(d)}_n) \Big)_{n\in \Z}.$$

Let $\vec{p}=(p_1,p_2,\cdots, p_d)$ and let $T^{\vec{p}(n)}: X^d\rightarrow X^d$ be defined by
\begin{equation}\label{}
  T^{\vec{p}(n)}(x_1,x_2,\ldots, x_d)=(T^{p_1(n)}x_1, T^{p_2(n)}x_2,\ldots, T^{p_d(n)}x_d).
\end{equation}

Recall that $T^{(d)}=T\times T\times \cdots\times T$ ($d$-times).
Define $T^\infty: (X^d)^{{\Z}}\rightarrow (X^d)^{{\Z}}$ by
$$T^\infty({\bf x}_n)_{n\in {\Z}}=(T^{(d)}{\bf x}_n)_{n\in {\Z}}.$$
Let $\sigma: (X^d)^{{\Z}}\rightarrow (X^d)^{{\Z}}$
be the shift map, i.e.,  for all $({\bf x}_n)_{n\in {\Z}}\in (X^d)^{{\Z}}$
$$(\sigma {\bf x})_n={\bf x}_{n+1}, \ \forall n\in {\Z}. $$

\subsubsection{Definition of $N_\infty(X,\A)$}
Let $x^{\otimes d}=(x,x,\ldots, x)\in X^d$ and
$$\D_{\infty}(X)=\{x^{(\infty)}\triangleq (\ldots, x^{\otimes d}, x^{\otimes d},\ldots ) \in (X^d)^{{\Z}}: x\in X\}.$$
For each $x\in X$, put
\begin{equation}\label{}
  \w_x^\A\triangleq(T^{\vec{p}(n)}x^{\otimes d})_{n\in \Z}=\big ((T^{p_1(n)}x, T^{p_2(n)}x,\ldots, T^{p_d(n)}x) \big)_{n\in \Z}
  \in (X^d)^{{\Z}},
\end{equation}
and set
\begin{equation}\label{}
  N_\infty(X,\A)=\overline{\bigcup\{\O(\w_x^\A,\sigma): x\in X\}}\subseteq (X^d)^{{\Z}}.
\end{equation}

\begin{rem} We have the following
\begin{enumerate}

\item It is clear that $N_\infty(X,\A )$ is invariant under the action of $T^\infty$ and $\sigma$, and $T^\infty\c \sigma=\sigma\c T^\infty$. Thus
$(N_\infty(X,\A), \langle T^\infty, \sigma\rangle)$ is a $\Z^2$-t.d.s.

\item If $(X,T)$ is transitive, then for each transitive point $x$ of $(X,T)$, $N_\infty(X,\A)=\overline{\O(\w_x^\A, \langle T^\infty, \sigma\rangle )}$.

\item Sometimes we identify  points in $(X^{d_1+d_2})^{\Z}$ as $(X^{d_1})^{\Z}\times (X^{d_2})^{\Z}$ as follows: 

{\small $$\Big((x^{(1)}_n, \ldots, x^{(d_1+d_2)}_n) \Big)_{n\in \Z}=\Big(\big((x^{(1)}_n, \ldots, x^{(d_1)}_n) \big)_{n\in \Z}, \big((x^{(d_1+1)}_n, \ldots, x^{(d_1+d_2)}_n) \big)_{n\in \Z}\Big).$$}


\item Similarly, we may define systems in $(X^{d})^{\Z_+}$ associated with the polynomials $\A$. In such a case,
$\sigma$ is a continuous surjective map but not a homeomorphism.
\end{enumerate}
\end{rem}

\subsubsection{Definition of $M_\infty(X,\A)$}
Assume that $\A=\{p_1, \ldots, p_{s}, p_{s+1}, \ldots, p_{d}\}$, where $p_i(n)=a_in, 1\le i\le s$ with
$s\ge 0$, $a_1,a_2,\ldots, a_s\in \Z\setminus\{0\}$ are distinct, and $\deg p_{i}\ge 2, s+1\le i\le d$.
Note that $s=0$  means that $\A=\{p_1, \ldots, p_{d}\}$ with $\deg p_i\ge 2$, $1\le i\le d$.

We have that
$$\omega_x^\A =\Big((T^{a_1n}x, \ldots, T^{a_{s}n}x, T^{p_{s+1}(n)}x,\ldots, T^{p_d(n)}x)\Big)_{n\in {\Z}}\in (X^d)^{\Z}.$$

Define $\widetilde{\sigma}: X^s\times (X^{d-s})^{\Z} \rightarrow X^s\times (X^{d-s})^{\Z}$ as follow:
for $((x_1,\ldots, x_s), {\bf x})\in X^s\times (X^{d-s})^{\Z}$, let
$$\widetilde{\sigma}\Big(\big((x_1,x_2, \ldots, x_s), {\bf x}\big)\Big)=\Big(\big((T^{a_1}x_1, T^{a_2}x_2, \ldots, T^{a_s}x_s), \sigma'{\bf x}\big)\Big),$$
where $\sigma'$ is the shift on $(X^{d-s})^{\Z}$. Recall that $\tau_{\vec{a}}=T^{a_1}\times \cdots \times T^{a_s}$. So, $\widetilde{\sigma}=\tau_{\vec{a}}\times \sigma'$.
Let
\begin{equation}\label{xi}
  \xi_x^\A=\Big((x, \ldots , x), (T^{p_{s+1}(j)}x,\ldots, T^{p_d(j)}x)_{j\in {\Z}}\Big)\in X^s\times (X^{d-s})^{\Z}.
\end{equation}
Then for $n\in {\Z}$,
\begin{equation}\label{w3}
  \widetilde {\sigma}^n\xi_x^\A =\Big((T^{a_1n }x, \ldots , T^{a_sn}x), (T^{p_{s+1}(n+j)}x,\ldots, T^{p_d(n+j)}x)_{j\in {\Z}}\Big)\in X^s\times (X^{d-s})^{\Z}.
\end{equation}

We set
\begin{equation}\label{}
  M_\infty(X,\A)=\overline{\bigcup\{\O(\xi_x^\A,\widetilde{\sigma}): x\in X\}}\subseteq X^s\times (X^{d-s})^{\Z}.
\end{equation}

It is clear that $M_\infty(X,\A )$ is invariant under the action of $T^\infty$ and $\widetilde{\sigma}$, and $(M_\infty(X,\A), \langle T^\infty, \widetilde{\sigma}\rangle)$ is a t.d.s.

\subsubsection{}

We define
$$\phi:(M_\infty(X,\A), \widetilde{\sigma})\rightarrow (N_\infty(X,\A),\sigma), \ ({\bf y}, {\bf x})\mapsto\Big( (\ldots,  \tau^{-1}_{\vec{a}} {\bf y}, \underset{\bullet } {\bf y}, \tau_{\vec{a}} {\bf y}, \tau_{\vec{a}}^2 {\bf y},\ldots ), {\bf x}\Big),$$ and it is an isomorphism. It is easy to show that

\begin{lem}[{\cite[Lemma 4.2.]{HSY22-1}\label{M-N-equ}}]
For any t.d.s. $(X,T)$ we have
$$(M_\infty(X,\A),\langle T^\infty, \widetilde{\sigma}\rangle)\cong (N_\infty(X,\A), \langle T^\infty, \sigma\rangle),$$ and for each $x\in X$
$$\big(\overline{\O}(\w_x^\A,\sigma),\sigma\big)\cong \big(\overline{\O}(\xi_x^\A,\widetilde{\sigma}), \widetilde{\sigma}\big).$$
\end{lem}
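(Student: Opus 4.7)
The plan is to verify that the explicitly defined map $\phi$ in the statement is a well-defined, continuous, bijective and equivariant map, and then to check that it carries the relevant orbit closures onto each other. Under the identification $(X^d)^{\Z} = (X^s)^{\Z}\times (X^{d-s})^{\Z}$ recalled just before the statement, $\phi({\bf y},{\bf x})$ is the point whose $n$-th coordinate in $X^d = X^s\times X^{d-s}$ equals $(\tau_{\vec{a}}^n {\bf y}, {\bf x}_n)$; continuity in $({\bf y},{\bf x})$ is immediate since each coordinate is continuous.

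The key pointwise computation is $\phi(\xi_x^\A)=\w_x^\A$. Reading from (\ref{xi}), the first $s$ entries of the $n$-th coordinate of $\phi(\xi_x^\A)$ are
$\tau_{\vec{a}}^n(x,\ldots,x) = (T^{a_1 n}x,\ldots,T^{a_s n}x) = (T^{p_1(n)}x,\ldots,T^{p_s(n)}x)$,
using $p_i(n)=a_i n$ for $1\le i\le s$, while the last $d-s$ entries are $(T^{p_{s+1}(n)}x,\ldots,T^{p_d(n)}x)$ by construction, matching $\w_x^\A$.

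Next I would verify equivariance. Since $\widetilde{\sigma}=\tau_{\vec{a}}\times \sigma'$, the $n$-th coordinate of $\phi(\widetilde{\sigma}({\bf y},{\bf x}))$ is $(\tau_{\vec{a}}^{n+1}{\bf y},{\bf x}_{n+1})$, which is also the $n$-th coordinate of $\sigma\phi({\bf y},{\bf x})$; hence $\phi\circ \widetilde{\sigma}=\sigma\circ \phi$. The identity $\phi\circ T^\infty = T^\infty\circ \phi$ reduces to the commutativity of $\tau_{\vec{a}}$ with $T^{(s)}$. Injectivity is clear: ${\bf y}$ is recovered from the first $s$ entries of the zeroth coordinate and ${\bf x}_n$ from the last $d-s$ entries of the $n$-th coordinate, and this inverse is continuous on the image.

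With these ingredients both isomorphism claims follow simultaneously. By equivariance, $\phi(\O(\xi_x^\A,\widetilde{\sigma}))=\O(\w_x^\A,\sigma)$; continuity and compactness then give $\phi(\overline{\O}(\xi_x^\A,\widetilde{\sigma}))=\overline{\O}(\w_x^\A,\sigma)$, which is the second assertion. Taking the union over $x\in X$ and closing up yields $\phi(M_\infty(X,\A))=N_\infty(X,\A)$, and since $\phi$ is a continuous bijection between compact Hausdorff spaces, it is a homeomorphism intertwining both generators of each acting group. There is no real obstacle here: the only thing needing careful bookkeeping is that the ``internal'' $\Z$-direction on the image side encodes the $\tau_{\vec{a}}$-orbit of ${\bf y}$, while the shift $\sigma'$ on the ${\bf x}$-component is preserved, so that the two transformations $T^\infty$ and $\widetilde{\sigma}$ translate correctly into $T^\infty$ and $\sigma$ on $N_\infty(X,\A)$.
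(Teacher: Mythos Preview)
Your proposal is correct and follows exactly the approach the paper intends: the paper writes down the map $\phi$ explicitly and declares the lemma ``easy to show,'' and your argument supplies precisely the routine verifications (continuity, $\phi(\xi_x^\A)=\w_x^\A$, equivariance under both generators, injectivity, and passage to closures) that justify this. There is nothing to add.
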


In the sequel when there is no room for confusion, we will use $\sigma$ instead of $\widetilde{\sigma}$
when studying $(M_\infty(X,\A),\langle T^\infty, \widetilde{\sigma}\rangle)$.

\subsection{The condition $(\spadesuit)$}\label{subsection-4.2}\
\medskip

For a given integral polynomial $p$ with $p(0)=0$, for each $j\in\Z$ let $p^{[j]}$ be defined by
$$p^{[j]}(n)=p(n+j)-p(j), \ \forall n\in\Z.$$

\begin{de}
Let $\A=\{p_1, p_2, \ldots, p_{d}\}$ be a family of integral polynomials. We say $\A$ satisfies {\em condition $(\spadesuit)$} if $p_1(0)=\cdots =p_d(0)=0$ and
\begin{enumerate}
  \item $p_i(n)=a_i n, 1\le i\le s$, where $s\ge 0$, and $a_1,a_2,\ldots, a_s$ are distinct non-zero integers;
  \item $\deg p_{j}\ge 2, s+1\le j\le d$;
  \item for each $i\neq j\in \{s+1,s+2,\ldots, d\}$, $p_j^{[k]}\neq p_i^{[t]}$ for any $k,t\in \Z$.
\end{enumerate}
\end{de}

\begin{lem}\label{ww=dem}
Let $k\in\N$ and $Q=\{q_1,q_2,\ldots,q_k\}$ be a family if non-constant integral polynomials with $q_i(0)=0, 1\le i\le k$. Then there is $1\le d\le k$
and $\A=\{p_1,\ldots,p_d\}\subset \{q_1,\ldots,q_k\}$ such that $\A$ satisfies condition $(\spadesuit)$, and $\{q_1, \ldots, q_k\}\subseteq \{p_i^{[m]}: 1\le i\le d, m\in \Z \}$.

\end{lem}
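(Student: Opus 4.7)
The plan is to split $Q$ into its linear and nonlinear parts and treat each separately. For the linear part, observe that any degree one $q \in Q$ satisfies $q(n)=an$ for some non-zero $a \in \Z$ (using $q(0)=0$), and a trivial computation gives $q^{[m]}(n) = a(n+m) - am = an = q(n)$, so linear polynomials are fixed by every shift $p \mapsto p^{[m]}$ and the degree is preserved under these shifts. Hence every linear $q\in Q$ can only be produced as $p_i^{[m]}$ by itself, so these polynomials must all be included in $\A$. Because they are distinct polynomials vanishing at $0$, their leading coefficients are automatically distinct and non-zero, which supplies the part $\{p_1,\ldots,p_s\}$ of $\A$ demanded by condition $(1)$ of $(\spadesuit)$.

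For the nonlinear part the key observation is that the relation
\[
p \sim p' \quad \Longleftrightarrow \quad p' = p^{[m]} \text{ for some } m \in \Z
\]
is an equivalence relation on polynomials vanishing at $0$. Reflexivity is $p = p^{[0]}$, and a direct expansion gives the cocycle identity $(p^{[a]})^{[b]}(n) = p^{[a+b]}(n)$, which yields both symmetry (with inverse $-m$) and transitivity. A short calculation using the same identity also shows $p_j^{[k]} = p_i^{[t]}$ if and only if $p_j = p_i^{[t-k]}$, so condition $(3)$ of $(\spadesuit)$ is exactly the statement that the $p_{s+1},\ldots,p_d$ lie in pairwise distinct $\sim$-classes.

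Using this, I would choose $\{p_{s+1},\ldots,p_d\}$ as a complete set of representatives, one from each $\sim$-equivalence class among the nonlinear elements of $Q$, and set $\A = \{p_1,\ldots,p_s\} \cup \{p_{s+1},\ldots,p_d\} \subseteq Q$, giving $1 \le d \le k$. Conditions $(1)$, $(2)$, $(3)$ of $(\spadesuit)$ are immediate from the construction. For the covering claim $\{q_1,\ldots,q_k\}\subseteq \{p_i^{[m]} : 1 \le i \le d,\ m \in \Z\}$: every linear $q \in Q$ belongs to $\A$ and equals $q^{[0]}$, while every nonlinear $q \in Q$ is $\sim$-equivalent to exactly one representative $p_i \in \A$, which by definition of $\sim$ means $q = p_i^{[m]}$ for some $m \in \Z$. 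The only place any care is needed is in verifying that $\sim$ really is an equivalence and that the two forms of the shift-equivalence condition agree, and both reduce to the cocycle identity $(p^{[a]})^{[b]} = p^{[a+b]}$; the rest is bookkeeping.
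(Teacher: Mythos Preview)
Your argument is correct and is exactly the natural proof: partition $Q$ by degree, keep all linear members (which are shift-invariant, hence forced into $\A$), and on the nonlinear members pick one representative per $\sim$-class, where $\sim$ is the shift relation whose equivalence is guaranteed by the cocycle identity $(p^{[a]})^{[b]}=p^{[a+b]}$. The paper in fact states this lemma without proof, treating it as an elementary observation, so there is no competing argument to compare against; your write-up supplies precisely the verification one would expect.
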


\begin{thm}[{\cite[Theorem 4.8.]{HSY22-1}}]
Let $(X,T)$ be a t.d.s. and $\A'$ be a family of finitely many non-constant integral polynomials with $q(0)=0, \forall q\in \A'$. Then there is a family of integral polynomials $\A$ satisfying the condition $(\spadesuit)$ such that
$$(N_\infty(X,\A),\langle T^\infty, {\sigma}\rangle)\cong (N_\infty(X,\A'), \langle T^\infty, \sigma\rangle),$$
and
$$(M_\infty(X,\A),\langle T^\infty, {\widetilde{\sigma}}\rangle)\cong (M_\infty(X,\A'), \langle T^\infty, \widetilde{\sigma}\rangle).$$
\end{thm}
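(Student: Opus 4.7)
The plan is to apply Lemma \ref{ww=dem} to the family $\A'=\{q_1,\ldots,q_k\}$ to extract a subfamily $\A=\{p_1,\ldots,p_d\}\subseteq\A'$ satisfying $(\spadesuit)$, together with indices $i_j\in\{1,\ldots,d\}$ and shifts $m_j\in\Z$ such that $q_j=p_{i_j}^{[m_j]}$ for each $1\le j\le k$. The required isomorphism will then be written down explicitly, translating between sequences indexed by $\A$ and by $\A'$ via the defining identity $p^{[m]}(n)=p(n+m)-p(m)$.

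Concretely, I would define a map $\Phi\colon (X^d)^{\Z}\to (X^k)^{\Z}$ by
$$\Phi({\bf z})_n^{(j)}=T^{-p_{i_j}(m_j)}\,z_{n+m_j}^{(i_j)},\qquad 1\le j\le k,\ n\in\Z.$$
A direct computation using $q_j(n)=p_{i_j}(n+m_j)-p_{i_j}(m_j)$ shows that $\Phi(\omega_x^{\A})=\omega_x^{\A'}$ for every $x\in X$; since each coordinate formula is the composition of a fixed coordinate shift with a fixed power of $T$, the map $\Phi$ is continuous and commutes with both $T^\infty$ and $\sigma$. For the candidate inverse, because $\A\subseteq\A'$ the natural coordinate-projection $\Psi\colon(X^k)^{\Z}\to(X^d)^{\Z}$, selecting for each $p_i\in\A$ a coordinate of $\A'$ equal to $p_i$, satisfies $\Psi(\omega_x^{\A'})=\omega_x^{\A}$ and is likewise continuous and equivariant.

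I would then verify the identities $\Psi\circ\Phi=\id$ and $\Phi\circ\Psi=\id$ on the dense sets of orbit points $\bigcup_x\O(\omega_x^{\A},\sigma)$ and $\bigcup_x\O(\omega_x^{\A'},\sigma)$ respectively; by continuity these identities propagate to the orbit closures, yielding that $\Phi$ restricts to a $\Z^2$-equivariant homeomorphism $N_\infty(X,\A)\cong N_\infty(X,\A')$ intertwining the actions generated by $T^\infty$ and $\sigma$. The second isomorphism $M_\infty(X,\A)\cong M_\infty(X,\A')$ then follows by chaining with Lemma \ref{M-N-equ}, which identifies $M_\infty$ and $N_\infty$ for each family.

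The main delicate point is keeping the bookkeeping straight: for linear $q$ the shift $q^{[m]}=q$ is trivial, while for $\deg q_j\ge 2$ different shifts generally give distinct polynomials, and one must rely on Lemma \ref{ww=dem} to guarantee that the distinguished representatives $p_i$ can actually be chosen \emph{inside} $\A'$ (so that $\Psi$ is well defined as an honest coordinate projection and so that $\Phi$ and $\Psi$ mesh on every coordinate). Once this is in hand, the checking of continuity, equivariance, and mutual inverseness on the dense orbit sets is routine.
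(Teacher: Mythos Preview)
The paper does not prove this theorem; it is quoted from \cite[Theorem 4.8]{HSY22-1} without argument, so there is no in-paper proof to compare against. Your approach is correct and is the natural one: extract $\A\subseteq\A'$ via Lemma~\ref{ww=dem}, build the explicit coordinate map $\Phi$ from the identity $q_j(n)=p_{i_j}(n+m_j)-p_{i_j}(m_j)$, and take for $\Psi$ the projection onto the $\A$-coordinates (well defined precisely because Lemma~\ref{ww=dem} gives $\A\subseteq\A'$, hence $m_{j(i)}=0$ for those indices). Your verification that $\Phi(\omega_x^{\A})=\omega_x^{\A'}$ is right, and since both $\Phi$ and $\Psi$ visibly commute with $\sigma$ and $T^\infty$, the identity $\Phi\circ\Psi=\id$ checked on $\{\omega_x^{\A'}:x\in X\}$ automatically propagates to the full $\sigma$-orbits and then to the closure $N_\infty(X,\A')$. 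In fact $\Psi\circ\Phi=\id$ holds on all of $(X^d)^{\Z}$, not just on a dense set, so injectivity of $\Phi$ is free. The passage to $M_\infty$ via Lemma~\ref{M-N-equ} is fine.
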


\subsubsection{}
Let $(X,T)$ be a t.d.s. Assume that $\A=\{p_1, \ldots, p_{s}, p_{s+1}, \ldots, p_{d}\}$, where $p_i(n)=a_in, 1\le i\le s$ with
$s\ge 0$, $a_1,a_2,\ldots, a_s\in \Z\setminus\{0\}$ are distinct, and $\deg p_{i}\ge 2, s+1\le i\le d$. Let $\xi_x^\A$ be defined in \eqref{xi}. Let
\begin{equation}\label{}
  W_{x}^\A=\overline{\O}(\xi_{x}^{\A}, \sigma).
\end{equation}
Then $(W_{x}^\A,\sigma)$ is transitive and $\xi_{x}^\A$ is a transitive point.
Similar to the saturation theorems proved in the previous section, we have


By the proof of \cite[Theorem 4.9]{HSY22-1}, we have:

\begin{thm}\label{thm-poly-saturate}
Let $(X,T)$ be a minimal t.d.s., and $\pi:X\rightarrow X_\infty$ be the factor map from $X$ to its maximal $\infty$-step pro-nilfactor $X_\infty$.
Let $\pi:(X,T)\rightarrow (Y,T)$ be an extension of minimal t.d.s. such that $\pi$ is open and $X_{\infty}$ is a factor of $Y$.
$$
\xymatrix{
                &         X \ar[d]^{\pi} \ar[dl]_{\pi_\infty}    \\
  X_{\infty} & Y   \ar[l]_{\phi}          }
$$
Then
we have the following properties:
\begin{enumerate}
  \item There is a residual subset $X_0$ of $X$  such that for all $x\in X_0$ and all family $\A=\{p_1,\ldots, p_d\}$ satisfying $(\spadesuit)$, $W_{x}^\A$ is ${\pi}^{\infty}$-saturated (${\pi}^{\infty}$ is defined similar to $T^\infty$), that is,
$$W_{x}^\A=({\pi}^{\infty})^{-1}\Big({\pi}^{\infty}(W_{x}^\A)\Big).$$


  \item For each family $\A=\{p_1,\ldots, p_d\}$ satisfying $(\spadesuit)$, $M_\infty(X,\A)$ is ${\pi}^\infty$-saturated, that is,
  $$M_\infty(X,\A)=({\pi}^\infty)^{-1}\Big(M_\infty(X_\infty,\A)\Big ).$$
\end{enumerate}

\end{thm}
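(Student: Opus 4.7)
The plan is a window-by-window reduction to the finite-dimensional saturation Theorems \ref{thm-poly-sat1} and \ref{thm-poly-sat2}. For each $k\in\N$, let ${\rm proj}_k:X^s\times(X^{d-s})^{\Z}\to X^{d_k}$ (with $d_k=s+(d-s)(2k+1)$) be the map that retains the $X^s$ factor and the coordinates of $(X^{d-s})^{\Z}$ indexed by $\{-k,\ldots,k\}$. From \eqref{w3} one reads off
$${\rm proj}_k(\sigma^n\xi_x^\A)=T^{\vec q_k(n)}\,x^{\otimes d_k},$$
where $\vec q_k$ enumerates the family
$$\A_k=\{a_1n,\ldots,a_sn\}\cup\{p_i(n+j):s+1\le i\le d,\ -k\le j\le k\}.$$
Since projection commutes with closure in the compact setting, this gives ${\rm proj}_k(W_x^\A)=\overline{\O}_{\A_k}(x^{\otimes d_k})$ and ${\rm proj}_k(M_\infty(X,\A))=N_{\A_k}(X)$.

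A short bookkeeping check shows that $\A_k$ consists of essentially distinct non-constant polynomials: clause (1) of $(\spadesuit)$ handles pairs among the linear polynomials; degree considerations separate linear from higher-degree polynomials; two shifts $p_i(n+j)$ and $p_i(n+j')$ of the same $p_i$ (with $i\ge s+1$) differ by a polynomial of degree $\deg p_i-1\ge 1$ when $j\ne j'$; and two shifts of distinct $p_i,p_{i'}\in\{p_{s+1},\ldots,p_d\}$ are essentially distinct by clause (3) of $(\spadesuit)$. Thus $\A_k$ meets the hypotheses of Theorems \ref{thm-poly-sat1} and \ref{thm-poly-sat2}. For part (1), Theorem \ref{thm-poly-sat1} already furnishes a single $T$-invariant residual set $X_0\subseteq X$ that works uniformly for every finite family of essentially distinct non-constant integral polynomials, so the same $X_0$ serves every $\A$ satisfying $(\spadesuit)$ and every window size $k$, and no additional Baire intersection is needed. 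Theorem \ref{thm-poly-sat2} is unconditional on $x$, so it applies to every $k$ directly for part (2).

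To lift saturation from each window to the full space, fix $x\in X_0$ and $\A$ satisfying $(\spadesuit)$, let ${\bf w}\in W_x^\A$, and suppose ${\bf w}'\in X^s\times(X^{d-s})^{\Z}$ satisfies $\pi^\infty({\bf w})=\pi^\infty({\bf w}')$. For each $k$, projecting yields $\pi^{(d_k)}({\rm proj}_k({\bf w}))=\pi^{(d_k)}({\rm proj}_k({\bf w}'))$, and the $\pi^{(d_k)}$-saturation of $\overline{\O}_{\A_k}(x^{\otimes d_k})$ supplied by Theorem \ref{thm-poly-sat1} produces ${\bf v}_k\in W_x^\A$ with ${\rm proj}_k({\bf v}_k)={\rm proj}_k({\bf w}')$. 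Compactness of $W_x^\A$ gives a subsequential limit ${\bf v}\in W_x^\A$; since ${\rm proj}_k({\bf v}_m)={\rm proj}_k({\bf w}')$ for all $m\ge k$, the limit satisfies ${\rm proj}_k({\bf v})={\rm proj}_k({\bf w}')$ for every $k$, whence ${\bf v}={\bf w}'\in W_x^\A$. This proves part (1); the same window-then-compactness argument, with $W_x^\A$ replaced by $M_\infty(X,\A)$ and Theorem \ref{thm-poly-sat1} replaced by Theorem \ref{thm-poly-sat2}, gives part (2).

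The main obstacle is verification rather than new ideas: one must confirm that the enlarged family obtained from each finite window satisfies the essential-distinctness hypothesis of the finite-dimensional saturation theorems. Condition $(\spadesuit)$ was formulated precisely to make this verification automatic, so once the polynomial bookkeeping is done the argument rests entirely on the existing saturation machinery together with a standard inverse-limit/compactness step.
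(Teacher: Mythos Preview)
Your proof is correct and is precisely the approach the paper intends: the paper does not give its own argument here but defers to \cite[Theorem~4.9]{HSY22-1}, and the placement of the finite-dimensional saturation Theorems~\ref{thm-poly-sat1} and~\ref{thm-poly-sat2} immediately before this statement signals exactly the window-by-window reduction you carry out. Your verification that condition $(\spadesuit)$ forces each truncated family $\A_k$ to consist of essentially distinct non-constant polynomials is the key bookkeeping step, and the compactness/inverse-limit lift is standard.
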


\subsection{Nilsystems}\
\medskip

For nilsystems, we have the following result.

\begin{thm}[{\cite[Theorem 5.5]{HSY22-1}}]\label{thm-nil-ergodic}
Let $(X, T)$ be a minimal pro-nilsystem. 
Let $\A= \{p_1,  p_2, \ldots,  p_d \}$ be a family of non-constant essentially distinct integral polynomials with $p_1(0)=\cdots =p_d(0)=0$. Then we have
\begin{enumerate}
  \item The system $(N_\infty(X,\A), \langle T^{\infty},\sigma \rangle)$ is a minimal pro-nilsystem.

  \item For each $x\in X$, the system $(\overline{\O}(\omega_x^\A, \sigma),\sigma)$ is a
minimal pro-nilsystem.
\end{enumerate}
\end{thm}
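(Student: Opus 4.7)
The plan is to reduce to a single nilsystem via inverse limits and then invoke Leibman's theorem on closures of polynomial orbits in nilmanifolds. Since $(X,T)$ is a pro-nilsystem, we can write $X=\lim\limits_{\longleftarrow}(X_k,T)$ with each $(X_k,T)$ a minimal $k$-step nilsystem. Both the diagonal action $T^\infty$ and the shift $\sigma$ commute with the induced projections $(X_{k+1}^d)^{\Z}\to (X_k^d)^{\Z}$, so
\begin{equation*}
N_\infty(X,\A)=\lim\limits_{\longleftarrow}N_\infty(X_k,\A)\quad\text{and}\quad \overline{\O}(\w_x^\A,\sigma)=\lim\limits_{\longleftarrow}\overline{\O}(\w_{x_k}^\A,\sigma),
\end{equation*}
where $x_k$ is the image of $x$ in $X_k$. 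Since an inverse limit of nilsystems is a pro-nilsystem by definition, it suffices to establish both conclusions under the assumption that $X=G/\Gamma$ is a single minimal $k$-step nilsystem.

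In that setting, I would observe that for each fixed window $[-N,N]$ and each $(m,k)\in\Z^2$, the projection of $(T^\infty)^m\sigma^k\w_x^\A$ to the coordinates $n\in[-N,N]$ is the image of $x^{\otimes d(2N+1)}$ under the element
\begin{equation*}
\bigl(t^{p_1(n+k)+m},\ldots,t^{p_d(n+k)+m}\bigr)_{n=-N}^{N}\in G^{d(2N+1)}.
\end{equation*}
This defines a polynomial map $\Z^2\to G^{d(2N+1)}$ of degree at most $\max_i\deg p_i$. By Leibman's theorem on polynomial orbit closures in nilmanifolds, the closure $Y_N\subseteq (X^d)^{2N+1}$ of this orbit is a sub-nilmanifold and carries the structure of a minimal $\Z^2$-nilsystem (that is, a commuting pair of nilpotent translations). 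Taking the inverse limit as $N\to\infty$ realizes $(N_\infty(X,\A),\langle T^\infty,\sigma\rangle)$ as an inverse limit of nilsystems, hence as a pro-nilsystem. Part (2) is obtained in the same way by restricting the polynomial map above to $\{0\}\times\Z\subseteq\Z^2$, i.e.\ to the $\sigma$-action alone on $\w_x^\A$.

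Minimality is then supplied by Leibman's equidistribution theorem for polynomial sequences on nilmanifolds: since the ambient $(X,T)$ is minimal and the $p_i$ are non-constant with $p_i(0)=0$, the polynomial orbit is equidistributed, and in particular dense, on each $Y_N$, and this density passes through the inverse limit. For the full system $N_\infty(X,\A)$, density of a single orbit under $\langle T^\infty,\sigma\rangle$ follows by combining $\sigma$-density of $\overline{\O}(\w_x^\A,\sigma)$ with the identity $(T^\infty)^m\w_x^\A=\w_{T^m x}^\A$, the continuity of the map $x\mapsto\w_x^\A$, and minimality of $(X,T)$.

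The main obstacle I anticipate is the coherent verification that the finite-window sub-nilmanifolds $\{Y_N\}_N$ assemble into a pro-nilsystem and that the coupling of $\sigma$ with $T^\infty$ stays within the nilpotent framework. The key point is that shifting by $k$ replaces each $p_i(n)$ by $p_i(n+k)=p_i(n)+q_{i,k}(n)$, where $q_{i,k}$ is a polynomial of degree $\deg p_i-1$ whose coefficients are themselves polynomials in $k$; hence the full $\Z^2$-orbit of $\w_x^\A$ is governed by polynomial sequences of bounded degree in $G^d$, so Leibman's framework applies uniformly in $N$ and the compatibility of the $Y_N$ under the natural projections is automatic.
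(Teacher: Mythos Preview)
This theorem is not proved in the present paper; it is quoted verbatim from \cite[Theorem~5.5]{HSY22-1}, so there is no in-paper argument to compare against. Your outline follows the natural route (reduce to a single nilsystem by inverse limits, then invoke Leibman's structure theory for polynomial orbits on nilmanifolds), and this is indeed the standard strategy used in that reference.

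There is, however, a genuine gap in your handling of the finite windows. You assert that the projection $Y_N$ ``carries the structure of a minimal $\Z^2$-nilsystem (that is, a commuting pair of nilpotent translations),'' but Leibman's orbit-closure theorem does not give this. That theorem tells you $Y_N$ is a finite union of sub-nilmanifolds, and the $T^\infty$-direction is visibly the fixed translation by $(t,\ldots,t)$; the $\sigma$-direction, by contrast, is \emph{not} a single translation on $Y_N$. Passing from $k$ to $k+1$ multiplies the $(i,n)$-coordinate by $t^{\,p_i(n+k+1)-p_i(n+k)}$, which depends on $k$, so $\sigma$ acts on $Y_N$ as a genuinely polynomial (non-constant) sequence of translations, not by a fixed group element. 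Your final paragraph notices a difficulty but does not resolve it: the observation that the exponents remain polynomials of bounded degree keeps you inside Leibman's framework for orbit \emph{closures}, but does not manufacture a nilsystem in which $\sigma$ is a translation.

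The missing ingredient is Leibman's linearization theorem (also available in the Green--Tao form): every polynomial sequence $g:\Z\to G$ on a nilmanifold is the image, under a continuous equivariant map, of a linear orbit $n\mapsto b^n\tilde x$ on a larger nilmanifold $\widetilde G/\widetilde\Gamma$. Applied to the polynomial sequence $k\mapsto\bigl(t^{p_i(n+k)}\bigr)_{i,\,|n|\le N}$ in $G^{d(2N+1)}$, this realizes $(Y_N,\sigma)$ as a topological factor of a genuine nilrotation; factors of minimal nilsystems are minimal pro-nilsystems, and the inverse limit over $N$ then yields (2). The $\Z^2$-version of the same linearization handles (1). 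Once this step is inserted, your argument goes through.
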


Note that Theorem \ref{thm-nil-ergodic} remains true for $\infty$-step pro-nilsystems.

\section{Zero-dimensional extensions}

In this section, we show that each factor map has a zero dimensional extension, which will be used in the proof of main results. To be precise, we will show
\begin{thm}\label{thm-0-dim-relative}
Let $\pi: (X,T)\rightarrow (Y,S)$ be an extension of t.d.s. $(X,T)$ and $(Y,S)$. Then
there exists a commutative diagram of extensions
\begin{equation*}
\xymatrix
{
(X,T) \ar[d]_{\pi} & (X',T') \ar[l]_{\tau}\ar[d]^{\pi'} \\
(Y,T) &  (Y',S') \ar[l]^{\theta}
}
\end{equation*}
with the following properties:
\begin{enumerate}
\item
$\theta$ and $\tau$ are almost one to one;
\item
$(X',T')$ and $(Y',S')$ are zero dimensional t.d.s.
\end{enumerate}

If in addition $(X,T)$ is minimal, then $(X',T')$ and $(Y',S')$ are minimal.
\end{thm}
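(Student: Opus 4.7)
The approach is a relative version of the standard symbolic-coding construction of a zero-dimensional almost one-to-one extension. First I would choose countable bases $\{V_n\}_{n\geq 1}$ of open balls in $Y$ and $\{U_m\}_{m\geq 1}$ of open balls in $X$, both with diameters tending to $0$, satisfying: $\partial V_n$ is nowhere dense in $Y$, $\partial U_m$ is nowhere dense in $X$, and, crucially, $\pi^{-1}(\partial V_n)$ is nowhere dense in $X$. The last condition is achievable because, for each fixed center $y_0\in Y$, the spheres $\{y:\rho_Y(y,y_0)=r\}$ are pairwise disjoint for distinct $r$, so their $\pi$-preimages are pairwise disjoint closed subsets of $X$; by separability of $X$ only countably many can have nonempty interior, and we pick radii avoiding the exceptional values.

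Next, define pointwise (hence generally discontinuous) shift-equivariant codings
$$\Phi_Y:Y\to\{0,1\}^{\N\times\Z},\qquad \Phi_Y(y)(n,k)=\chi_{V_n}(S^ky),$$
$$\Phi_X:X\to\{0,1\}^{\N\times\Z}\times\{0,1\}^{\N\times\Z},\quad \Phi_X(x)=\bigl((\chi_{U_m}(T^kx))_{m,k},(\chi_{V_n}(\pi(T^kx)))_{n,k}\bigr).$$
Set $Y':=\overline{\Phi_Y(Y)}$ and $X':=\overline{\Phi_X(X)}$ in the respective product spaces; both are zero-dimensional, closed, and invariant under the $\Z$-shift $\sigma$ on the $\Z$-coordinate. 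Let $T',S'$ denote $\sigma$ restricted to $X'$ and $Y'$. Since the second block of $\Phi_X(x)$ coincides with $\Phi_Y(\pi(x))$, projection onto that block restricts to a continuous shift-equivariant surjection $\pi':X'\to Y'$. To construct the factor maps downward, consider the graph closures $\widetilde Y:=\overline{\{(\Phi_Y(y),y):y\in Y\}}\subseteq Y'\times Y$ and analogously $\widetilde X$. The first projection $\widetilde Y\to Y'$ is a bijection: if $(\omega,y),(\omega,y')\in\widetilde Y$ with $y\ne y'$, picking a basis element $V_n$ with $y\in V_n$ and $y'\notin\overline{V_n}$ would force $\omega(n,0)$ to be simultaneously $1$ (from any sequence approximating $y$) and $0$ (from one approximating $y'$). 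By compactness this continuous bijection is a homeomorphism, and composition of its inverse with the projection to $Y$ yields the continuous, shift-equivariant surjection $\theta:Y'\to Y$; an identical argument yields $\tau:X'\to X$. The commutativity $\pi\tau=\theta\pi'$ holds on the dense subset $\Phi_X(X)\subseteq X'$ by direct inspection and extends to all of $X'$ by continuity.

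For the almost one-to-one property, the sets $Y_0:=Y\setminus\bigcup_{n,k\in\Z}S^{-k}\partial V_n$ and $X_0:=X\setminus\bigl(\bigcup_{m,k}T^{-k}\partial U_m\cup\bigcup_{n,k}T^{-k}\pi^{-1}(\partial V_n)\bigr)$ are dense $G_\d$ by the Baire category theorem, being complements of countable unions of closed nowhere-dense sets (nowhere-density is preserved by the homeomorphisms $S^{-k}$ and $T^{-k}$). On $Y_0$ (resp.\ $X_0$) the coding $\Phi_Y$ (resp.\ $\Phi_X$) is continuous in the product topology, which forces $\theta^{-1}(y)=\{\Phi_Y(y)\}$ and $\tau^{-1}(x)=\{\Phi_X(x)\}$ to be singletons there. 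I expect the main technical obstacle to be the combinatorial bookkeeping in the first step required to secure all three nowhere-dense conditions simultaneously; once that choice is made, the remainder is routine.

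Finally, to handle the minimal case, pick any minimal subsystem $X''\subseteq X'$ and set $Y'':=\pi'(X'')$, which is automatically minimal. Since $X,Y$ are minimal and $\tau(X''),\theta(Y'')$ are closed invariant, one obtains $\tau(X'')=X$ and $\theta(Y'')=Y$; and for $x\in X_0$ the inclusion $(\tau|_{X''})^{-1}(x)\subseteq\tau^{-1}(x)=\{\Phi_X(x)\}$ is a singleton, so $\tau|_{X''}$ remains almost one-to-one, and likewise for $\theta|_{Y''}$. Replacing $X',Y'$ by these minimal subsystems then completes the proof.
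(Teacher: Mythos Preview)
Your approach is the same symbolic--coding idea as the paper's (the paper packages it via admissible covers and an inverse limit of subshifts, you via a single coding into a product of two-point spaces; the crucial common ingredient is choosing the $V_n$ so that each $\pi^{-1}(\partial V_n)$ is nowhere dense in $X$). However, there is a genuine gap in the almost one-to-one step, and it comes precisely from your choice to set $Y'=\overline{\Phi_Y(Y)}$ rather than $Y'=\overline{\Phi_Y(Y_0)}$.

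You correctly show that $\theta^{-1}(y)=\{\Phi_Y(y)\}$ for every $y\in Y_0$, but this only says the singleton-fiber locus contains $\theta^{-1}(Y_0)=\Phi_Y(Y_0)$, and you have not checked that $\Phi_Y(Y_0)$ is dense in $\overline{\Phi_Y(Y)}$. In general it is not. Take $Y=[0,1]$ with $S=\id$, and include among your basis balls $V_1=(0,\tfrac12)$ and $V_2=(\ep,1)$ for some small $\ep>0$. The set $\{y'\in Y:\chi_{V_1}(y')=\chi_{V_2}(y')=0\}$ equals $\{0,1\}$, and both points lie in $\partial V_1\cup\partial V_2$, hence outside $Y_0$. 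Thus the open cylinder $C=[\omega(1,0)=0,\ \omega(2,0)=0]$ is disjoint from $\Phi_Y(Y_0)$ but meets $\Phi_Y(Y)$ at $\Phi_Y(0)$ and $\Phi_Y(1)$; one checks $C\cap Y'=\{\Phi_Y(0),\Phi_Y(1)\}$, an open subset of $Y'$, and that $\theta^{-1}(0)$ and $\theta^{-1}(1)$ each have at least two elements (approach $0$ from the right to get a limit with $\omega(1,0)=1\neq\Phi_Y(0)(1,0)$). So $\theta$ fails to be almost one-to-one. The fix is exactly what the paper does implicitly: define $Y'=\overline{\Phi_Y(Y_0)}$ and $X'=\overline{\Phi_X(X_0)}$ (these are still shift-invariant since $Y_0,X_0$ are invariant). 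Then $\theta^{-1}(Y_0)=\Phi_Y(Y_0)$ is dense in $Y'$ by construction, surjectivity of $\theta$ follows from density of $Y_0$ in $Y$, and surjectivity of $\pi'$ follows because $\pi(X_0)\subseteq Y_0$ is dense and $\Phi_Y$ is continuous on $Y_0$. With this correction your argument goes through; your handling of the minimal case by passing to a minimal subsystem is fine and in fact is unaffected by the gap (one singleton fiber suffices in the minimal setting).
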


Experts in the field may know this result, yet it remains undocumented in the literature. To ensure completeness, we provide a proof in the appendix.

\section{Proof of Theorem \ref{thm-main1}}

In this section we give the proof of Theorem \ref{thm-main1}.

\subsection{A special case}\
\medskip

First we deal with a special case.




\begin{lem}\label{lem7}
Let $(X,T)$ be a minimal t.d.s. and $\A=\{p_1,\ldots, p_d\}$ be a family of integer polynomials with $p_1(0)=\cdots =p_d(0)=0$. If $(X,T)$ is an almost one to one extension of its maximal $\infty$-step pro-nilfactor $X_\infty$, then there is a $T$-invariant residual subset $X_0$ of $X$ such that for each $x\in X_0$, $\w_x^\A$ is a distal point of $\big((X^d)^\Z,\sigma\big)$.

In particular, for each $x_0\in X_0$, any minimal point $z\in Z$, where $(Z,S)$ is any t.d.s.,
and any each $\ep>0$,
\begin{equation*}\label{}
 N_{z,x_0,\ep}=\{n\in \Z: \rho_Z(S^n z,z)<\ep \ \text{and}\ \rho_{X}(T^{p_1(n)}x_0, x_0)<\ep,\ \ldots,\ \rho_{X}(T^{p_d(n)}x_0, x_0)<\ep \}
\end{equation*}
is a syndetic subset.
\end{lem}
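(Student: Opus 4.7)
The plan is to identify $X_0$ directly from the almost one-to-one hypothesis, show that $\w_x^\A$ is a distal point of $((X^d)^\Z,\sigma)$ by transferring the proximality relation to the pro-nilfactor $X_\infty$, and then invoke Furstenberg's product recurrence theorem to obtain the syndeticity of $N_{z,x_0,\ep}$. Let $\pi\colon X\to X_\infty$ be the canonical factor map and put
\begin{equation*}
X_0=\{x\in X:\pi^{-1}(\pi(x))=\{x\}\}.
\end{equation*}
Because $\pi$ is almost one-to-one, $X_0$ is residual in $X$; because $\pi\circ T=T\circ\pi$ and $T$ is a homeomorphism, $X_0$ is $T$-invariant. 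By Lemma \ref{ww=dem} (together with a routine removal of constant and repeated entries from $\A$), it suffices to establish the conclusion assuming $\A$ consists of essentially distinct non-constant polynomials vanishing at $0$.

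Fix $x\in X_0$ and let $\pi^\infty\colon (X^d)^\Z\to (X_\infty^d)^\Z$ denote the map obtained by applying $\pi^{(d)}$ in each $\Z$-coordinate (defined analogously to $T^\infty$). I claim $\w_x^\A$ is a distal point. Suppose $\w'\in\overline{\O}(\w_x^\A,\sigma)$ and $(\w_x^\A,\w')$ is proximal in $((X^d)^\Z,\sigma)$. Then $(\w_{\pi(x)}^\A,\pi^\infty(\w'))$ is proximal in $((X_\infty^d)^\Z,\sigma)$, with $\pi^\infty(\w')\in\overline{\O}(\w_{\pi(x)}^\A,\sigma)$. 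By Theorem \ref{thm-nil-ergodic}(2) in its $\infty$-step version, the orbit closure $\overline{\O}(\w_{\pi(x)}^\A,\sigma)$ is a minimal pro-nilsystem, hence distal, so $\pi^\infty(\w')=\w_{\pi(x)}^\A$. A direct fiber computation then gives, for each $n\in\Z$ and $1\le i\le d$, that $\pi((\w'_n)^{(i)})=\pi(T^{p_i(n)}x)$. Since $X_0$ is $T$-invariant, $T^{p_i(n)}x\in X_0$, so $\pi^{-1}(\pi(T^{p_i(n)}x))=\{T^{p_i(n)}x\}$, forcing $\w'=\w_x^\A$.

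For the syndeticity statement, I invoke Furstenberg's classical theorem (\cite[Chapter 9]{F}): a point is distal if and only if, for every minimal point $y$ in every t.d.s., $(x,y)$ is a minimal point of the product system. Applied to $\w_{x_0}^\A$ and $z$, this yields that $(z,\w_{x_0}^\A)$ is a minimal point of $(Z\times(X^d)^\Z,S\times\sigma)$. Consequently, for every open neighborhood $V$ of $(z,\w_{x_0}^\A)$, the set of return times is syndetic. Taking the cylindrical neighborhood
\begin{equation*}
V=B_\ep(z)\times\Big\{\w\in (X^d)^\Z:\max_{1\le i\le d}\rho_X\big((\w_0)^{(i)},x_0\big)<\ep\Big\}
\end{equation*}
recovers precisely $N_{z,x_0,\ep}$ (up to the choice of product metric), so $N_{z,x_0,\ep}$ is syndetic.

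The main obstacle is the distality step, specifically showing that the $\pi^\infty$-fiber over $\w_{\pi(x)}^\A$ collapses to the singleton $\{\w_x^\A\}$. The crucial leverage is $T$-invariance of $X_0$, which simultaneously propagates the singleton-fiber property of $\pi$ along the entire polynomial trajectory $\{T^{p_i(n)}x:n\in\Z,\ 1\le i\le d\}$; without this invariance one could not rule out extraneous coordinate-wise preimages. The passage to the $\infty$-step setting of Theorem \ref{thm-nil-ergodic}(2) is routine, as pro-nilsystems are inverse limits of nilsystems and distality is preserved under inverse limits.
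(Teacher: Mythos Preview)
Your proof is correct and follows essentially the same route as the paper: define $X_0$ via the singleton-fiber condition for $\pi$, use Theorem~\ref{thm-nil-ergodic} to get distality of $\w_{\pi(x)}^\A$, lift distality to $\w_x^\A$ through the singleton fiber $(\pi^\infty)^{-1}(\w_{\pi(x)}^\A)=\{\w_x^\A\}$, and finish with Furstenberg's theorem \cite[Theorem 9.11]{F}. The paper additionally imposes a polynomial-recurrence condition on $X_0$ (citing \cite{BL96}) which is not actually used in the argument, so your streamlined choice of $X_0$ is fine; your explicit reduction to essentially distinct non-constant polynomials before invoking Theorem~\ref{thm-nil-ergodic} is a point the paper leaves implicit.
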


\begin{proof}
Let $\pi:X\rightarrow  X_\infty$ be the almost one to one extension. Let $X_0$ be the set of points of $X$ satisfying
\begin{enumerate}
\item for each $x\in X_0$, $\pi^{-1}\pi(x)=\{x\}$, and $X_0$ is $T$-invariant.

\item for each $x\in X_0$, $(x,\ldots, x)$ ($d$-times) is a recurrent point with respect to $\A$. 
\end{enumerate}
It is clear that $X_0$ is $T$-invariant and residual, since both points satisfying (1) or (2) form a $T$-invariant dense $G_\delta$ sets (this follows by the definition
and \cite{BL96}).

For $x\in X_0$, let $y=\pi(x)$. Then $(\pi^\infty)^{-1}(\w_y^\A)=\{\w_x^\A\}$.
Thus $\w_x^\A$ is a distal point for $\sigma$ since $\w_y^\A$ is a distal point for $\sigma$ (by Theorem \ref{thm-nil-ergodic}) and $(\pi^\infty)^{-1}(\w_y^\A)=\{\w_x^\A\}$. Hence $\left(\overline{\O}(\w_x^\A,\sigma),\sigma\right)$ is minimal and $$\pi^\infty:\overline{\O}(\w_x^\A,\sigma)\rightarrow \overline{\O}(\w_y^\A,\sigma)$$ is an almost one to one extension of two minimal t.d.s.

Since $\w_x^\A$ is a distal point for $\sigma$, $(\w_x^\A, z)$ is a minimal point for each minimal point $z$ in a minimal system $(Z,S)$ under the
action of $\sigma\times S$ by \cite[Theorem 9.11]{F}. The proof is complete.
\end{proof}

\subsection{Proof of Theorem \ref{thm-main1} when the polynomials satisfying condition $(\spadesuit)$} \label{subsec7.3}
\
\medskip

Before the proof of Theorem \ref{thm-main1}, we describe the basic ideas. Let $(X,T)$ be a minimal t.d.s., $\pi:X\ra X_\infty$ is not almost one to one, and $\A$ be a finite family of nonlinear polynomials satisfying $(\spadesuit)$.

First we use Theorem \ref{thm-0-dim-relative} to get a zero dimensional extension $\pi':X'\ra Y'=X_\infty'$ of $\pi:X\ra X_\infty$, and then use Theorem \ref{thm-poly-saturate} to get an open extension $\pi^*:X^*\ra Y^*$ of $\pi'$. Since $X'$ and $Y'$ are zero dimensional, this allows us to construct a clopen partition of $X'$ and a clopen partition of $X^*$ for a given $\ep>0$ (note that $X^*$ is not necessarily zero dimensional). Then we show that up to an $\ep$-error, the induced dynamical system can be viewed as the product a Bernoulli system with an almost $\infty$-step pronilsystem.
Finally we use Furstenberg disjointness theorem to conclude the proof.

\begin{proof}
By Lemma \ref{lem7}, it remains to show the case when $(X,T)$ is not an almost one to one extension of $X_\infty$.
Without loss of generality we may assume that the maximal equicontinuous factor of $X$ is infinite. In fact, if the maximal equicontinuous factor of $X$ is finite, then
we may replace $X$ by the product system $(X\times {\mathbb S}^1, T\times S)$, where $({\mathbb S}^1,S)$ is an irrational rotation on the unit circle. Since $(X,T)$ and  $({\mathbb S}^1,S)$ has no nontrivial common factors, $(X\times {\mathbb S}^1, T\times S)$ is minimal \cite[Corollary 22, Chapter 11]{Au88}, and its maximal equicontinuous factor is infinite.

\medskip

We divide the proof into the following steps.


\medskip

\noindent {\bf Step 1.} {\em Notations and preparation.}

\medskip

Let $(X,T)$ be a minimal t.d.s., and $\pi:X\rightarrow X_\infty$ be the factor map from $X$ to its maximal $\infty$-step pro-nilfactor $X_\infty$. By Theorem \ref{thm-0-dim-relative}, there are almost one to one extensions $\tau: X'\rightarrow  X$ and $\theta: Y'\rightarrow X_\infty$
such that $(X', T)$ and $(Y',T)$ are zero dimensional minimal t.d.s.
\begin{equation*}
\xymatrix
{
X \ar[d]_{\pi} & X' \ar[l]_{\tau}\ar[d]^{\pi'} \\
X_\infty &  Y' \ar[l]^{\theta}
}
\end{equation*}
By Theorem \ref{thm-poly-saturate} there are minimal t.d.s. $X^*$ and $Y^*$ which are almost one to one
extensions of $X'$ and $Y'$ respectively, an open factor map $\pi^*$ and a commuting diagram below.
\begin{equation*}
	\xymatrix
	{X' \ar[d]_{\pi'}
		 &
		X^* \ar[l]_{\varsigma^*}
		\ar[d]^{\pi^*}
           \\
		Y'          &
		Y^* \ar[l]^{\varsigma}
			}
	\end{equation*}
By Theorem \ref{thm-poly-saturate} we have the following properties:
\begin{enumerate}
  \item There is a residual invariant subset $X_0^*$ of $X^*$  such that for all $x\in X_0^*$ and each family $\A=\{p_1,\ldots, p_d\}$ satisfying $(\spadesuit)$, $W_{x}^\A$ is ${\pi^*}^{\infty}$-saturated, that is,
$$W_{x}^\A=({\pi^*}^{\infty})^{-1}\Big({\pi^*}^{\infty}(W_{x}^\A)\Big).$$

  \item For each family $\A=\{p_1,\ldots, p_d\}$ satisfying $(\spadesuit)$, $M_\infty(X^*,\A)$ is ${\pi^*}^\infty$-saturated, that is,
  $$M_\infty(X^*,\A)=({\pi^*}^\infty)^{-1}\Big(M_\infty(Y^*,\A)\Big ).$$
\end{enumerate}
Thus we have the following commuting diagram:
\begin{equation*}
	\xymatrix
	{X \ar[d]_{\pi}             &
		X' \ar[l]_{\tau}
		\ar[d]^{\pi'}
		 &
		X^* \ar[l]_{\varsigma^*}
		\ar[d]^{\pi^*}
           \\
		X_\infty                 &
		Y' \ar[l]^{\theta}          &
		Y^* . \ar[l]^{\varsigma}
			}
	\end{equation*}
Since $\tau,\theta,\varsigma$ and $\varsigma^*$ are almost one to one, there are invariant residual subsets $X_1^*, X'_1,X_1,Y_1',Y_1^*$ and $(X_\infty)_1$ of $X^*, X',X,Y',Y^*$ and $X_\infty$ respectively such that in the following diagram  $\tau,\theta,\varsigma$ and $\varsigma^*$ are one to one.
\begin{equation*}
	\xymatrix
	{X_1 \ar[d]_{\pi}             &
		X_1' \ar[l]_{\tau}
		\ar[d]^{\pi'}
		 &
		X_1^* \ar[l]_{\varsigma^*}
		\ar[d]^{\pi^*}
           \\
		(X_\infty)_1                 &
		Y_1' \ar[l]^{\theta}          &
		Y_1^*  \ar[l]^{\varsigma}
			}
\end{equation*}
By Theorem \ref{thm-Fort}, the set $Y_2'$ of points of continuity of $(\pi')^{-1}: Y'\rightarrow 2^{X'}, y\mapsto (\pi')^{-1}(y)$ forms a residual subset in $Y'$, and it is clear that $Y_2'$ is invariant. Then $(\pi'\circ \varsigma^*)^{-1}(Y_2')$ is also an invariant residual subset of $X^*$, as $\pi'\circ \varsigma^*$ is semi-open.
Since $(Y^*,T)$ is an almost one to one extension of an $\infty$-step pro-nilfactor $X_\infty$, then by Lemma \ref{lem7} there is a $T$-invariant residual subset $Y_0^*$ of $Y^*$ such that for each $y^*\in X_0$, $\w_{y^*}^\A$ is a distal point.

Let
$$X^*_\bullet=X_0^*\cap X_1^*\cap (\pi'\circ \varsigma^*)^{-1}(Y_2')\cap (\pi^*)^{-1}(Y_0^*).$$
It is a $T$-invariant residual subset of $X^*$, and $X'_\bullet=\varsigma^*(X^*_\bullet)$ and $X_\bullet=\tau(X'_\bullet)$ are invariant residual subsets of $X'$ and $X$ respectively as $\varsigma^*$ and $\tau$ are almost one to one.

In the sequel we fix a point $x_0^*\in X^*_\bullet$, and let
$$x_0'=\varsigma^*(x_0^*)\in X_\bullet', \ x_0=\tau(x_0')\in X_\bullet,\  y_0^*=\pi^*(x_0^*),\  y_0'=\varsigma(y_0^*)\  \text{and}\ y_0=\theta(y_0').$$

\medskip

\noindent {\bf Step 2.} {\em For each $\ep>0$, construct a clopen partition of $X'$ and a clopen partition of $X^*$.}

\medskip

Let $\ep>0$ be a fixed positive real number. Since $X'$ is zero dimensional, there are nonempty pairwise disjoint clopen subsets $A'_1,A'_2,\ldots, A'_k$ of $X'$ such that
$$(\pi')^{-1}(y_0')\subset \bigcup_{i=1}^k A'_i,\ \ (\pi')^{-1}(y'_0)\cap A'_i\neq \emptyset,\  \text{and}\ {\rm diam} (A'_i) < \ep$$ for each $i\in \{1,2,\ldots, k\}$.
Since $\pi$ is not an almost one to one extension, each fibre of $\pi$ has at least two points and we have that $k\ge 2$ when $\ep$ is small enough.

Note that $y_0'\in Y'_2$ is a continuous point of $(\pi')^{-1}: Y'\rightarrow 2^{X'}, y\mapsto (\pi')^{-1}(y)$. As $Y'$ is also a zero dimensional space, there is a clopen neighbourhood $V$ of $y'_0$ such that
$$(\pi')^{-1}(V)\subseteq \bigcup_{i=1}^k A'_i, \ (\pi')^{-1}(y)\cap A'_i\neq \emptyset \ \text{for each}\ y\in V \ \text{and}\ i\in \{1,2,\ldots, k\}.$$
Let $A_i=A'_i\cap (\pi')^{-1}(V)$. Then $A_1,\ldots, A_k$ are nonempty pairwise disjoint clopen subsets of $X'$ with diameter less than $\ep$ and
\begin{equation}\label{h5}
  (\pi')^{-1}(V)= \bigcup_{i=1}^k A_i, \ (\pi')^{-1}(y)\cap A_i\neq \emptyset \ \text{for each}\ y\in V \ \text{and}\ i\in \{1,2,\ldots, k\}.
\end{equation}
Without loss of generality, we assume that $x'_0\in A_1$.

\medskip
\begin{figure}
	\centering
	\begin{tikzpicture}[scale = 0.8]
	\draw (0,0) rectangle (14,8);
	
	\draw[blue]( (0,1) -- (14,1);
	\draw[blue](0,4) -- (14,4);
        \draw[blue]( (0,5) -- (14,5);
	\draw[blue]( (0,6) -- (14,6);
	 \draw[blue]( (0,7) -- (14,7);
	\draw[blue]( (0,8) -- (14,8);	
		
	\draw (2,0) -- (2,8);	
	\draw (4,0) -- (4,8);	
	\draw (6,0) -- (6,8);
	\draw (8,0) -- (8,8);	
	\draw (12,0) -- (12,8);
	
	\draw[red] (1,0) -- (1,8);
	
	\draw (0,-2) -- (14,-2);
	
	\node at (0.5,7.5) {$A_1$};
	\node at (0.5,6.5) {$A_2$};
	\node at (0.5,5.5) {$A_3$};
	\node at (0.5,4.5) {$A_4$};
	\node at (0.5,2.5) {$\vdots$};
	\node at (0.5,0.5) {$A_k$};
	
	\node at (10,7.5) {$B_1$};
	\node at (10,6.5) {$B_2$};
	\node at (10,5.5) {$B_3$};
	\node at (10,4.5) {$B_4$};
	\node at (10,2.5) {$\vdots$};
	\node at (10,0.5) {$B_k$};
	
	\node at (1,8.5) {$U_0$};
	\node at (3,8.5) {$U_1$};
	\node at (5,8.5) {$U_2$};
	\node at (7,8.5) {$U_3$};
	\node at (13,8.5) {$U_{N-1}$};
	
	\node at (-0.5,6) {$X'$};
	\node at (-0.5,-2) {$Y'$};
	
	\node at (1,-1.5) {$V_0=V$};
	\node at (3,-1.5) {$V_1$};
	\node at (5,-1.5) {$V_2$};
	\node at (7,-1.5) {$V_3$};
	\node at (10,-1.5) {$\cdots$};
	\node at (13,-1.5) {$V_{N-1}$};
	
	\draw[->] (7.8,-0.5) -- (7.8, -1.2) node at (8.2, -0.8) {$\pi'$};
	
	\draw (0,-1.9)--(0,-2.1);
	\draw (2,-1.9)--(2,-2.1);
	\draw (4,-1.9)--(4,-2.1);
	\draw (6,-1.9)--(6,-2.1);
	\draw (8,-1.9)--(8,-2.1);
	\draw (12,-1.9)--(12,-2.1);
	\draw (14,-1.9)--(14,-2.1);
	\filldraw[red] (1,7.5) circle (0.1cm) node[right] {$x_0'$};
	\filldraw[red] (1,-2) circle (0.1cm) node[below] {$y_0'$};
	
	\end{tikzpicture}
	\caption{ }
\end{figure}

\medskip

Since $(Y',T)$ is minimal, there is a smallest $N\in \N$ such that $Y'=\bigcup_{j=0}^{N-1}T^{-j}V$, and $Y'\not=\bigcup_{j=0}^{N-2}T^{-j}V$.
For each $j\in \{1,2,\ldots, N\}$, we have that
$$(\pi')^{-1}(T^{-j}V)=T^{-j}((\pi')^{-1}V)=T^{-j}\big(\bigcup_{i=1}^k A_i \big )=\bigcup_{i=1}^k T^{-j}A_i,$$
and $(\pi')^{-1}(y)\cap T^{-j}A_i\neq \emptyset$ for each $y\in T^{-j}V$ and $i\in \{1,2,\ldots, k\}$.

Let $V_0=V$ and $V_n=T^{-n}V\setminus (V\cup T^{-1}V\cup\cdots \cup T^{-(n-1)}V)$ for $n\in \{1,2,\ldots, N-1\}$.
We claim that $V_n\not=\emptyset$ for $n\in \{1,2, \ldots, N-1\}$.

To show the claim we assume contrarily that there is some $M\in \{1, 2, \ldots, N-1\}$ such that $V_M=\emptyset$. Then $T^{-M}V\subseteq \bigcup_{j=0}^{M-1}T^{-j}V$. And
$$T^{-(M+1)}V=T^{-1}T^{-M}V\subseteq T^{-1}\bigcup_{j=0}^{M-1}T^{-j}V=\big(\bigcup_{j=1}^{M-1}T^{-j}V\big)\cup T^{-M}V\subseteq \bigcup_{j=0}^{M-1}T^{-j}V.$$
Inductively, we have $T^{-(M+i)}V \subseteq \bigcup_{j=0}^{M-1}T^{-j}V$ for all $i\in \N$. Thus $Y'=\bigcup_{j=0}^{N-1}T^{-j}V\subseteq  \bigcup_{j=0}^{M-1}T^{-j}V$, which contradicts with the fact $N$ is the smallest number such that $Y'=\bigcup_{j=0}^{N-1}T^{-j}V$.
This ends the proof of the claim.

Then $\{V_0,V_1,\ldots, V_{N-1}\}$ is a clopen cover of $Y'$ and $V_0,V_1,\ldots, V_{N-1}$ are nonempty and pairwise disjoint.
For each $j\in \{0,1,\ldots, N-1\}$, set $U_j=(\pi')^{-1}(V_j)$. Then $\{U_0,U_1,\ldots, U_{N-1}\}$ is a pairwise disjoint clopen cover of $X'$. As $V_j\subseteq T^{-j}V$, we have
$$(\pi')^{-1}(V_j)= \bigcup_{i=1}^k T^{-j}A_i\cap U_j, \ (\pi')^{-1}(y)\cap T^{-j}A_i\cap U_j\neq \emptyset$$
 for each $ y\in V_j \ \text{and}\ i\in \{1,2,\ldots, k\}.$
Note that $T^{-j}A_1\cap U_j, \ldots, T^{-j}A_k\cap U_j$ are pairwise disjoint nonempty clopen subsets of $X'$. Now for each $i\in \{1,2,\ldots, k\}$ let
$$B_i=\bigcup_{j=0}^{N-1}T^{-j}A_i\cap U_j .$$
Then $B_1, B_2,\ldots, B_k$ are pairwise disjoint clopen subsets of $X'$, $X'=\bigcup_{i=1}^k B_i$ and for each $y\in Y'$, $(\pi')^{-1}(y)\cap B_i\neq \emptyset$ for each $i\in \{1,2,\ldots, k\}$. And note that $B_i\cap (\pi')^{-1}(V)=A_i$ for each $i\in \{1,2,\ldots, k\}$.

\medskip

Let $V^*=\varsigma^{-1}(V)$, $A_i^*=(\varsigma^*)^{-1}(A_i)$, $B_i^*=(\varsigma^*)^{-1}(B_i)$ for each $i\in \{1,2,\ldots, k\}$.
Then
$$(\pi^*)^{-1}(V^*)= \bigcup_{i=1}^k A^*_i\ \text{and} \ X^*=\bigcup_{i=1}^k B^*_i,$$
and $B^*_1, B^*_2,\ldots, B^*_k$ are pairwise disjoint nonempty clopen subsets of $X^*$. It is easy to check that
$$A^*_1=B_1^*\cap (\pi^*)^{-1}(V^*),\ A^*_2=B_2^*\cap (\pi^*)^{-1}(V^*),  \ldots, A^*_k=B_k^*\cap (\pi^*)^{-1}(V^*)$$ are pairwise disjoint nonempty clopen subsets of $X^*$.
Since $x_0'\in A_1$, we have that $x_0^*\in A_1^*$.

\medskip

\begin{figure}
	\centering
	\begin{tikzpicture}	
	\draw (0,0) rectangle	(10,6);
	\draw[blue] (0,1) -- (10,1);	
	\draw[blue] (0,3) -- (10,3);	
	\draw[blue] (0,4) -- (10,4);	
	\draw[blue] (0,5) -- (10,5);	
	
	\draw[red] (1,0) -- (1,6);	
	\draw (2,0) -- (2,6);
	\draw[red] (4,0) -- (4,6);	
	
	\draw (0,-1.3)-- (10,-1.3);	
	
	\node at (-0.5, 4) {$X^{*}$};
	\node at (-0.5, -1.2) {$Y^{*}$};
	
	\node[red] at (0.5,0.5)  {$A_{k}^{*}$};
	\node[red] at (0.5,3.5)  {$A_{3}^{*}$};
	\node[red] at (0.5,4.5)  {$A_{2}^{*}$};
	\node[red] at (0.5,5.5)  {$A_{1}^{*}$};
	\node[blue] at (7,0.5)  {$B_{k}^{*}$};
	\node[blue] at (7,2)  {$\vdots$};
	\node[blue] at (7,3.5)  {$B_{3}^{*}$};
	\node[blue] at (7,4.5)  {$B_{2}^{*}$};
	\node[blue] at (7,5.5)  {$B_{1}^{*}$};
	
	\filldraw[red] (1,5.5) circle (0.1cm) node [right] {$x_{0}^{*}$};
	\filldraw[red] (1,-1.3) circle (0.1cm) node [above=0.2cm, right] {$y_{0}^{*}$};
	\filldraw[red] (4,-1.3) circle (0.1cm) node [above=0.2cm, right] {$y^{*}$};
	
	\draw[red, ->] (4.5, 2)--(4,2.3) node at (5.5,2) {$(\pi^{*})^{-1}(y^{*})$};
	\draw (0,-1.2)--(0,-1.4);
         \draw (2,-1.2)--(2,-1.4);
         \draw (10,-1.2)--(10,-1.4);

         \draw[->] (5, -0.4)--(5, -1) node at (5.4, -0.7) {$\pi^{*}$};
         \node at (1.3, -1.6) {$V^{*}$};
	\end{tikzpicture}
\caption{ }
\end{figure}
\medskip

\noindent {\bf Claim} \ {\em For each $i\in \{1,2,\ldots, k\}$, $(\pi^*)^{-1}(y^*)\cap A^*_i\neq \emptyset$ for each $y^*\in V^*$, and $(\pi^*)^{-1}(y^*)\cap B^*_i\neq \emptyset$ for each $y^*\in Y^*$.
}

\begin{proof}[Proof of Claim]
To prove the claim, we need to use the construction of O-diagram.
\begin{equation*}
	\xymatrix
	{X' \ar[d]_{\pi'}
		 &
		X^* \ar[l]_{\varsigma^*}
		\ar[d]^{\pi^*}
           \\
		Y'          &
		Y^* \ar[l]^{\varsigma}
			}
	\end{equation*}
First recall how to construct the O-diagram.
The map $(\pi')^{-1}: Y'\rightarrow 2^{X'}, y\mapsto (\pi')^{-1}(y)$ is a u.s.c. map, and the set $Y_2'$ of
continuous points of $(\pi')^{-1}$ is a dense $G_\d$ subset of $Y'$.
Let $Y^*=\overline{\{(\pi')^{-1}(y): y\in Y_2'\}}$,
where the closure is taken in $2^{X'}$. For each $A\in Y^*$, there is some $y\in Y'$ such that $A\subseteq (\pi')^{-1}(y)$, and hence $A\mapsto y$
defines the map $\varsigma: Y^*\rightarrow Y'$.
It can be proved that $X^*=\{(x,\tilde y)\in X \times
Y^* :  x \in \tilde y\}$. $\varsigma^*$ and
$\pi^*$ are the restrictions to $X^*$ of the projections of
$X'\times Y^*$ onto $X'$ and $Y^*$ respectively.

\medskip

We need to show that for each $i\in \{1,2,\ldots, k\}$, $(\pi^*)^{-1}(y^*)\cap A^*_i\neq \emptyset$ for each $y^*\in V^*$. The fact that $(\pi^*)^{-1}(y^*)\cap B^*_i\neq \emptyset$ for each $y^*\in Y^*$ will follow readily.

Let $y^*\in V^*$. Then $y'=\varsigma(y^*)\in V$. Since $(Y',T)$ is minimal and $y_0'\in Y_2'$, there is some sequence $n_t\nearrow\infty, t\to\infty$ such that $T^{n_t}(\pi')^{-1} (y_0')=(\pi')^{-1} (T^{n_t}y_0')\to y^*, t\to\infty$, in the space $2^{X'}$. Via the map $\varsigma$, we have $T^{n_t} y_0'\to y', t\to\infty$ in $Y'$.
There is some $t_0\in \N$ such that $T^{n_t}y_0'\in V$ for all $t\ge t_0$. By \eqref{h5}, when $t\ge t_0$, we have
$$(\pi')^{-1} (T^{n_t}y_0')\subseteq \bigcup_{i=1}^kA_i\ \text{and}\ (\pi')^{-1}(T^{n_t}y_0')\cap A_i\neq \emptyset$$ for each $i\in \{1,2,\ldots, k\}$.
As $(\pi')^{-1} (T^{n_t}y_0')\to y^*, t\to\infty$ in $2^{X'}$, it follows that  $y^*\subseteq \bigcup_{i=1}^kA_i$ and $y^*\cap A_i\neq \emptyset$ for each $i\in \{1,2,\ldots, k\}$. For each $i\in \{1,2,\ldots, k\}$, let $x_i\in y^*\cap A_i$. Then $(x_i,y^*)\in (\pi^*)^{-1}(y^*) \subseteq X^*$ and $\varsigma^*(x_i,y^*)=x_i\in A_i$. That is $(x_i,y^*)\in A_i^*\cap (\pi^*)^{-1}(y^*)$. Thus we have proved that for each $i\in \{1,2,\ldots, k\}$, $(\pi^*)^{-1}(y^*)\cap A^*_i\neq \emptyset$ for each $y^*\in V^*$. This ends the proof of {Claim}.
\end{proof}


Let $E$ be a space.
A point of $(E^d)^{\Z}$ is denoted by
$$\pmb{\xi}=(\pmb{\xi}_n)_{n\in {\Z}}=\Big((\xi^{(1)}_n, \xi^{(2)}_n,\cdots, \xi^{(d)}_n) \Big)_{n\in \Z}.$$
Let $\sigma: (E^d)^{{\Z}}\rightarrow (E^d)^{{\Z}}$
be the shift map, i.e.,  for all $\pmb{\xi}=(\pmb{\xi}_n)_{n\in {\Z}}\in (Z^d)^{{\Z}}$
$$(\sigma \pmb{\xi})_n=\pmb{\xi}_{n+1}, \ \forall n\in {\Z}. $$
In the following step, we will take $E=\{1,2,\ldots,k\}$, $k\ge 2$.

\medskip

\noindent {\bf Step 3.} {\em For given $\ep>0$, let $\{B_1^*,\ldots, B_k^*\}$ be the partition of $X^*$ in Step 2 with respect to $\ep$. Define $\pmb{\xi}=(\pmb{\xi}_n)_{n\in {\Z}}=\Big((\xi^{(1)}_n, \xi^{(2)}_n,\cdots, \xi^{(d)}_n) \Big)_{n\in \Z} \in \Sigma_k=\Big(\{1,2,\ldots, k\}^d\Big)^\Z$ as follows:
\begin{equation}\label{h1}
  T^{p_i(n)}x_0^*\in B_j^* \Longleftrightarrow \xi_n^{(i)}=j,\ \text{where}\ i\in \{1,2,\ldots, d\}, \ j\in \{1,2,\ldots, k\},\ n\in \Z.
\end{equation}
Then
\begin{itemize}
  \item $\pmb \xi$ is a transitive point of $(\Sigma_k,\sigma)$;
  \item for any minimal t.d.s. $(W,H)$ and $w\in W$, we have that
\begin{equation}\label{h2}
  \overline{\O}\Big((w,\pmb{\xi}), H \times \sigma\Big)=W \times \Sigma_k.
\end{equation}
\end{itemize}

In particular, in $(((Y^*)^d)^\Z\times \Sigma_k, \sigma\times \sigma)$, we have that
\begin{equation}\label{}
  \overline{\O}\Big((\w_{y_0^*}^\A,\pmb\xi), \sigma\times \sigma\Big)=W_{y_0^*}^\A\times \Sigma_k.
\end{equation}

}


\begin{proof} 
First we show that $\pmb \xi$ is a transitive point of $(\Sigma_k,\sigma)$. To see this, we show that for each $\pmb{\w}\in \Sigma_k$ and each $N\in \N$, there is some $n\in \N$ such that $\rho(\sigma^n\pmb\xi,\pmb\w)<\frac{1}{N+1}$.
where the metric $\rho$ on $\Sigma_k$ be defined as follows: for ${\bf x}=({\bf x}_n)_{n\in \Z}, {\bf y}=({\bf y}_n)_{n\in \Z}$,
$$\rho({\bf x}, {\bf y})=0\ \text{if}\ {\bf x}={\bf y};\ \  \rho({\bf x}, {\bf y})=\frac{1}{m+1}, \ \text{if}\ {\bf x}\not={\bf y}\ \text{and}\ m=\min\{|j|: \exists 1\le i\le d\ s.t.\  x^{(i)}_j\neq y^{(i)}_j \}.$$
By the definition of $\rho$, we need to show that there is some $n\in \N$ such that
$$\pmb\xi_{n+j}=\pmb\w_j, \ \forall j\in \{-N,-N+1,\ldots, N\}.$$
By Claim in Step 2, for each $i\in \{1,2,\ldots, k\}$, $\pi^*(B_i^*)=Y^*$. Thus $((\pi^*)^d)^{2N+1}\big(\prod_{j=-N}^N B_{\pmb\w_j}^*\big)=((Y^*)^d)^{2N+1}$, where $B_{\pmb\w_j}=B^*_{\w^{(1)}_j}\times B^*_{\w^{(2)}_j}\times \cdots\times B^*_{\w^{(d)}_j}$. It follows that
$$\Big(\prod_{i=-\infty}^{-N-1}(X^*)^d \times \prod_{j=-N}^N B_{\pmb\w_j}^*\times \prod_{i=N+1}^\infty (X^*)^d\Big)\cap  ({\pi^*}^{\infty})^{-1}\Big(W_{y_0^*}^\A\Big)\neq \emptyset.$$
By the choice of $x_0^*$, $W_{x_0^*}^\A$ is ${\pi^*}^{\infty}$-saturated, that is,
$$W_{x_0^*}^\A=({\pi^*}^{\infty})^{-1}\Big({\pi^*}^{\infty}(W_{x_0^*}^\A)\Big)=
({\pi^*}^{\infty})^{-1}\Big(W_{y_0^*}^\A\Big).$$
Thus $$\Big(\prod_{i=-\infty}^{-N-1}(X^*)^d \times \prod_{j=-N}^N B_{\pmb\w_j}^*\times \prod_{i=N+1}^\infty (X^*)^d\Big)\cap  W_{x_0^*}^\A\neq \emptyset.$$
That is, $\Big(\prod_{i=-\infty}^{-N-1}(X^*)^d \times \prod_{j=-N}^N B_{\pmb\w_j}^*\times \prod_{i=N+1}^\infty (X^*)^d\Big) \cap  W_{x_0^*}^\A$ is a nonempty open subset of $W_{x_0^*}^\A$. There is some $n\in \N$ such that
$$\sigma^n \w_{x_0^*}^\A \in \prod_{i=-\infty}^{-N-1}(X^*)^d \times \prod_{j=-N}^N B_{\pmb\w_j}^*\times \prod_{i=N+1}^\infty (X^*)^d,$$
which implies that
$$T^{p_i(n+j)}x_0^*\in B^*_{\w_j^{(i)}}, \ \text{for all}\ i\in \{1,2,\ldots, d\}, j\in \{-N,-N+1,\ldots, N\}. $$
By \eqref{h1}, this means that
$$\pmb\xi_{n+j}=\pmb\w_j, \  \text{for all}\ j\in \{-N,-N+1,\ldots, N\}.$$
Hence $\pmb\xi$  is a transitive point of $(\Sigma_k,\sigma)$.

Now let $(W,H)$ be a minimal t.d.s. and $w\in W$. Since $\pmb\xi$ is a transitive point of $(\Sigma_k,\sigma)$ and $W=\overline{\O}(w,H)$, $\overline{\O}\Big((w,\xi), H\times \sigma\Big)$ is a joining of t.d.s. $(W, H)$ and $(\Sigma_k,\sigma)$. Since $(W,H)$ is a minimal system, by Furstenberg disjointness theorem, 
$(\Sigma_k,\sigma)$ is disjoint from $(W, H)$. It follows that
$$\overline{\O}\Big((w,\pmb{\xi}), H \times \sigma\Big)=W \times \Sigma_k.$$
So we have \eqref{h2}. 
\end{proof}

\medskip

\noindent {\bf Step 4.} {\em
Let $(Z, S)$ be a minimal t.d.s. and $z\in Z$. We show that for each $\ep>0$ and $x_0'\in X_\bullet'$,
$$N_{z,x_0',\ep}=\{n\in \Z: \rho_Z(S^n z,z)<\ep \ \text{and}\ \rho_{X'}(T^{p_1(n)}x_0', x_0')<\ep,\ \ldots,\ \rho_{X'}(T^{p_d(n)}x_0', x_0')<\ep \}$$
is a piecewise syndetic subset.}

\begin{proof}
For each given $\ep>0$, we have the constructions and results in Step 2 and Step 3.
By Step 3, we have that
$$\overline{\O}\Big((\w_{y_0^*}^\A,\pmb\xi), \sigma\times \sigma\Big)=W_{y_0^*}^\A\times \Sigma_k.$$
Since $y_0^*=\pi^*(x^*_0)\in Y_0^*$, $\w_{y_0^*}^\A$ is a distal point and it is product recurrent. Thus $(z, \w_{y_0^*}^\A)$ is a minimal point of $(Z\times W_{y_0^*}^\A, S\times \sigma)$. By Step 3 again, we have that
$$\overline{\O}\Big((z, \w_{y_0^*}^\A,\pmb\xi),(S\times \sigma)\times \sigma\Big)=\overline{\O}\big((z,\w_{y_0^*}^\A), S\times \sigma\big)\times \Sigma_k.$$
It follows that
$$(z, \w_{y_0^*}^\A, \pmb\xi)\in \overline{\O}\Big((z, \w_{y_0^*}^\A, \pmb\xi), (S\times \sigma)\times \sigma\Big),$$
and by Theorem \ref{prop-M} $(z, \w_{y_0^*}^\A, \pmb\xi)$ is a piecewise syndetic recurrent point since minimal points are dense in $\overline{\O}\Big((z, \w_{y_0^*}^\A,\pmb\xi),(S\times \sigma)\times \sigma\Big)=\overline{\O}\big((z,\w_{y_0^*}^\A), S\times \sigma\big)\times \Sigma_k.$

Recall that $y_0^*\in V^*$ and $T^{p_i(0)}x_0^*=x_0^*\in A_1^*\subseteq B^*_1$, for all $i\in \{1,2,\ldots,d\}$. Hence $\pmb\xi_0=(1,1,\ldots,1)=1^{\otimes d}$ and $[1^{\otimes d}]=\{\pmb\w\in \Sigma_k: \pmb\w_0=1^{\otimes d}\}$ is a neighbourhood of $\pmb \xi$. It follows that $ B(z,\ep)\times [(V^*)^{\otimes d}]\times [1^{\otimes d}]$ is a neighbourhood of $(z, \w_{y_0^*}^\A, \pmb\xi)$, where $[(V^*)^{\otimes d}]= \{{\bf y}\in ((Y^*)^d)^\Z: {\bf y}_0 \in V^*\times V^*\times \cdots\times V^* \}$. Thus
$$N_1\triangleq \{n\in \Z:(S^nz, \sigma^n\w_{y_0^*}^\A, \sigma^n\pmb \xi)\in B(z,\ep)\times [(V^*)^{\otimes d}]\times [1^{\otimes d}]\} $$
is piecewise syndetic.

Let $n\in N_1$.
By $\sigma^{n}\w_{y_0^*}^\A\in [(V^*)^{\otimes d}]$, we have $T^{p_i(n)}y_0^*\in V^*$ for all $i\in \{1,2,\ldots,d\}$. Hence
\begin{equation}\label{h3}
  T^{p_i(n)}x_0^*\in (\pi^*)^{-1}(V^*)=\bigcup_{j=1}^k A_j^*\quad \text{for all}\ i\in \{1,2,\ldots, d\}.
\end{equation}
Since $\sigma^n\pmb\xi\in [1^{\otimes d}]$, we have $\pmb\xi_n=1^{\otimes d}$. By \eqref{h1},
\begin{equation}\label{h4}
  T^{p_i(n)}x_0^*\in B_1^*\quad \text{for all}\ i\in \{1,2,\ldots, d\}.
\end{equation}
By \eqref{h3} and \eqref{h4}, we have $ T^{p_i(n)}x_0^*\in A_1^*=B_1^*\cap (\pi^*)^{-1}(V^*)$ for all $i\in \{1,2,\ldots, d\}$. So $T^{p_i(n)}x_0'\in A_1$ for all $i\in \{1,2,\ldots, d\}$. As $x_0'\in A_1$ and ${\rm diam}( A_1)<\ep$, we have that $\rho_{X'}(T^{p_i(n)}x_0',x_0')<\ep$ for all $i\in \{1,2,\ldots, d\}$. Combining with $S^nz\in B(z,\ep)$, we have that
$$ \rho_Z(S^n z,z)<\ep \ \text{and}\ \rho_{X'}(T^{p_1(n)}x_0', x_0')<\ep,\ \ldots,\ \rho_{X'}(T^{p_d(n)}x_0', x_0')<\ep.$$
So
$$N_1\subseteq N_{z,x_0',\ep},$$
and $N_{z,x_0',\ep}$ is piecewise syndetic.
The proof is complete.
\end{proof}

\medskip

\noindent {\bf Step 5.} {\em
Let $(Z, S)$ be a minimal t.d.s. and $z\in Z$. We show that for each $\ep>0$ and $x_0\in X_\bullet$,
$$N_{z,x_0,\ep}=\{n\in \Z: \rho_Z(S^n z,z)<\ep \ \text{and}\ \rho_{X}(T^{p_1(n)}x_0, x_0)<\ep,\ \ldots,\ \rho_{X}(T^{p_d(n)}x_0, x_0)<\ep \}$$
is a piecewise syndetic subset.}

\medskip

\begin{proof}
For each $x_0\in X_\bullet$, there is some $x_0'\in X_\bullet'$ such that $x_0=\tau(x_0')$. Since $\tau^{-1}(B(x_0,\ep))$ is an open neighbourhood of $x_0'$, choose some $\ep'>0$ such that $B(x_0',\ep')\subseteq \tau^{-1}(B(x_0,\ep))$.
Then $$N_{z,x_0',\ep'}\subseteq N_{z,x_0,\ep}.$$
By Step 4, $N_{z,x_0',\ep'}$ is piecewise syndetic, and it follows that $N_{z,x_0,\ep}$ is piecewise syndetic.
\end{proof}
The whole proof is complete.
\end{proof}

\subsection{Proof of Theorem \ref{thm-main1} for the general case}\label{subsect-7.3}
\
\medskip

In the previous subsection we show that our main theorem holds if the family $\A$ of nonlinear integral polynomials satisfies the condition $(\spadesuit).$
For the general case, the proof is basic the same to the one in Subsection \ref{subsec7.3} and we need to modify Step 2 and Step 4 in the previous subsection.

\medskip

Without loss of generality, we assume that nonlinear integral polynomials $p_1(n)$, $p_2(n)$, $\ldots$, $p_d(n)$ are essentially distinct.
Let  $\A=\{p_1(n), \ldots, p_{d}(n)\}$. We have showed that our main theorem holds if $\A$ satisfies the condition $(\spadesuit)$ in Subsection \ref{subsec7.3}. Generally,
by Lemma \ref{ww=dem}, there are nonlinear integral polynomials $\A'=\{q_1,\ldots, q_r\}$ satisfying the condition ($\spadesuit$) and $\{p_{1},\ldots, p_d\}\subseteq \{q_i^{[j]}=q_i(n+j)-q_i(j): 1\le i\le r, j\in \Z \}.$ Thus there is some $l\in \N$ such that
$$\{p_{1},\ldots, p_d\}\subseteq \{q_i^{[j]}=q_i(n+j)-q_i(j): 1\le i\le r, -l\le j\le l\}.$$

By Lemma \ref{lem7}, it remains to show the case when $(X,T)$ is not an almost one to one extension of $X_\infty$. Without loss of generality we may assume that the maximal equicontinuous factor of $X$ is infinite, since if it is not the case
we may replace $X$ by its product with an irrational rotation.
We divide the proof into the following steps.


\medskip

\noindent {\bf Step 1.} {\em Notations and preparation.}

\medskip

Let $(X,T)$ be a minimal t.d.s., and $\pi:X\rightarrow X_\infty$ be the factor map from $X$ to its maximal $\infty$-step pro-nilfactor $X_\infty$. By the Step 1 of Subsection \ref{subsec7.3}, we have the following commuting diagram:
\begin{equation*}
	\xymatrix
	{X \ar[d]_{\pi}             &
		X' \ar[l]_{\tau}
		\ar[d]^{\pi'}
		 &
		X^* \ar[l]_{\varsigma^*}
		\ar[d]^{\pi^*}
           \\
		X_\infty                 &
		Y' \ar[l]^{\theta}          &
		Y^* . \ar[l]^{\varsigma}
			}
	\end{equation*}
Let $X^*_\bullet$, $X'_\bullet=\varsigma^*(X^*_\bullet)$ and $X_\bullet=\tau(X'_\bullet)$ be defined as in Subsection \ref{subsec7.3}.

In the sequel we fix a point $x_0^*\in X^*_\bullet$, and let
$$x_0'=\varsigma^*(x_0^*)\in X_\bullet', \ x_0=\tau(x_0')\in X_\bullet,\  y_0^*=\pi^*(x_0^*),\  y_0'=\varsigma(y_0^*)\  \text{and}\ y_0=\theta(y_0').$$

\medskip

\noindent {\bf Step 2.} {\em For each $\ep>0$, construct a clopen partition of $X'$ and a clopen partition of $X^*$.}

\medskip

Let $\ep>0$ be a fixed real number. Let
$$L=\max\{|q_i(j)|: 1\le i\le r, -l\le j\le l\}.$$
As $T$ is a homeomorphism, there is some $\d\in (0,\ep)$ such that whenever $\rho_{X'}(x,x')<\d$, one has that $\rho_{X'}(T^nx,T^nx')<\ep$ for all $n\in \{-L,-L+1,\ldots, L\}$. Fix such $\d>0$, and choose a $\d'\in (0,\d)$ such that whenever $\rho_{X'}(x,x')<\d'$, one has that $\rho_{X'}(T^nx,T^nx')<\d$ for all $n\in \{-L,-L+1,\ldots, L\}$.

Since $X'$ is zero dimensional, there are nonempty pairwise disjoint clopen subsets $A'_1,A'_2,\ldots, A'_k$ of $X'$ such that
$$(\pi')^{-1}(y_0')\subset \bigcup_{i=1}^k A'_i,\ \ (\pi')^{-1}(y'_0)\cap A'_i\neq \emptyset,\  \text{and}\ {\rm diam} (A'_i) < \d' \ \text{for each}\ i\in \{1,2,\ldots, k\}.$$
Since $\pi$ is not an almost one to one extension, each fibre of $\pi$ has at least two points and we have that $k\ge 2$ when $\d'$ is small enough.

Note that $y_0'\in Y'_2$ is a continuous point of $(\pi')^{-1}: Y'\rightarrow 2^{X'}, y\mapsto (\pi')^{-1}(y)$. As $Y'$ is also a zero dimensional space, there is a clopen neighbourhood $V$ of $y'_0$ such that
\begin{itemize}
  \item $(\pi')^{-1}(V)\subseteq \bigcup_{i=1}^k A'_i, \ (\pi')^{-1}(y)\cap A'_i\neq \emptyset \ \text{for each}\ y\in V \ \text{and}\ i\in \{1,2,\ldots, k\},$
  \item $\{T^jV\}_{j=-L}^L$ are disjoint.
\end{itemize}

Let $A_i=A'_i\cap (\pi')^{-1}(V)$. Then $A_1,\ldots, A_k$ are nonempty pairwise disjoint clopen subsets of $X'$ with diameter less than $\d'$ and
\begin{equation}\label{e1}
  (\pi')^{-1}(V)= \bigcup_{i=1}^k A_i, \ (\pi')^{-1}(y)\cap A_i\neq \emptyset \ \text{for each}\ y\in V \ \text{and}\ i\in \{1,2,\ldots, k\}.
\end{equation}
Without loss of generality, we assume that $x'_0\in A_1$. By the definition of $\d'$, we have that for each $j\in \{-L,-L+1,\ldots, L\}$, $T^jA_1,\ldots, T^jA_k$ are nonempty pairwise disjoint clopen subsets of $X'$ with diameter less than $\d$ and
\begin{equation}\label{e7}
  (\pi')^{-1}\left(T^jV\right)= \bigcup_{i=1}^k T^j A_i, \ (\pi')^{-1}(y)\cap T^jA_i\neq \emptyset \ \text{for each}\ y\in T^jV \ \text{and}\ i\in \{1,2,\ldots, k\}.
\end{equation}

Next repeat the construction in Step 2 of Subsection \ref{subsec7.3} to get clopen partition $\{B_1, B_2, \ldots, B_k\}$ of $X'$. Since $\{T^jV\}_{j=-L}^L$ are disjoint clopen subsets of $Y'$, there will be some additional properties.

Since $(Y',T)$ is minimal, there is a smallest $N\in \N$ such that $Y'=\bigcup_{j=-N}^{N}T^{-j}V$. Since $\{T^jV\}_{j=-L}^L$ are disjoint clopen subsets of $Y'$, we have $N\ge L$.
Let $V_0=V$ and $V_n=T^{n}V\setminus (V\cup T^{1}V\cup T^{-1}V\cup \cdots \cup T^{-(n-1)}V)$ and $V_{-n}=T^{-n}V\setminus (V\cup T^{1}V\cup T^{-1}V\cup \cdots \cup T^{-(n-1)}V\cup T^nV)$ for $n\in \{0,1,\ldots, N\}$.
We have that $V_n\not=\emptyset$ for $n\in \{-N,-N+1,\ldots, N\}$ and $V_j=T^{j}V$ for all $j\in \{-L,-L+1,\ldots, L\}$.

\medskip

\begin{figure}
	\centering
	\begin{tikzpicture}	
	\draw (0,0) rectangle	(12,7);
		
	\draw[blue] (0,1)--(12,1);
	\draw[blue] (0,4)--(12,4);
	\draw[blue] (0,5)--(12,5);
	\draw[blue] (0,6)--(12,6);
	
	\draw (2,0)--(2,7);	
	\draw (2.5,0)--(2.5,7);
	\draw (3.5,0)--(3.5,7);
	\draw (4,0)--(4,7);
	\draw[red] (4.8,0)--(4.8,7);
	\draw (5.6,0)--(5.6,7);
	\draw (6.6,0)--(6.6,7);
	\draw (7.6,0)--(7.6,7);
	\draw (9,0)--(9,7);
	\draw (10,0)--(10,7);
	
	\node[red] at (4.4,6.5) {$A_1$};
	\node[red] at (4.4,5.5) {$A_2$};
	\node[red] at (4.4,4.5) {$A_3$};
	\node[red] at (4.4,0.5) {$A_k$};
	\node[blue] at (8.5,6.5) {$B_1$};
	\node[blue] at (8.5,5.5) {$B_2$};
	\node[blue] at (8.5,4.5) {$B_3$};
	\node[blue] at (8.5,0.5) {$B_k$};
	\node[blue] at (8.5,2.5) {$\vdots$};
	
	\filldraw[red]  (4.8, 6.5) circle (0.05cm) node[right] {$x_0'$};
	
	\draw(0,-2)--(12,-2);
	\draw(0,-1.9)--(0,-2.1);
	\draw(0.4,-1.9)--(0.4,-2.1);
	\draw(1,-1.9)--(1,-2.1);
	\draw(2,-1.9)--(2,-2.1);
	\draw(2.5,-1.9)--(2.5,-2.1);
	\draw(3.5,-1.9)--(3.5,-2.1);
	\draw(4,-1.9)--(4,-2.1);
	\draw[red](4.8,-1.9)--(4.8,-2.1) node [below] {$y_0'$};
	\draw(5.6,-1.9)--(5.6,-2.1);
	\draw(6.6,-1.9)--(6.6,-2.1);
	\draw(7.6,-1.9)--(7.6,-2.1);
	\draw(9,-1.9)--(9,-2.1);
	\draw(10,-1.9)--(10,-2.1);
	\draw(10.6,-1.9)--(10.6,-2.1);
	\draw(11.5,-1.9)--(11.5,-2.1);
	\draw(12,-1.9)--(12,-2.1);
	
	\node[rotate=60] at (0.2, -1.3) {\small $V_{-N}$};
	\node[rotate=45] at (1, -1.3) {\small $V_{-L-1}$};
	\node at (1.6, -2.3) {\small $V_{-L}=T^{-L}V$};
	\node[rotate=60] at (3.6, -1) {\small $V_{-1}=T^{-1}V$};
	\node at (4.5, -1.7) {\small $V_{0}=V$};
	\node[rotate=60] at (6, -1.2) {\small $V_{1}=TV$};
	\node[rotate=60] at (10, -1.2) {\small $V_{L}=T
	^{L}V$};
	\node at (10.4,-2.3) {\small $V_{L+1}$};
	\node at (11.2,-2.3) {\small $\cdots$};
	\node at (11.8,-2.3) {\small $V_{N}$};
	
	\draw[->] (7.5,-0.5)--(7.5, -1.2) node at (7.8,-0.8) {$\pi'$};
	\end{tikzpicture}
\caption{ }
\end{figure}

\medskip

Next we repeat the construction in Step 2 of Subsection \ref{subsec7.3} to get clopen subsets of $\{B_1, B_2,\ldots, B_k\}$ of $X'$
such that
\begin{itemize}
  \item $X'=\bigcup_{i=1}^k B_i$;
  \item for each $y\in Y'$, $(\pi')^{-1}(y)\cap B_i\neq \emptyset$ for each $i\in \{1,2,\ldots, k\}$;
  \item $B_i\cap (\pi')^{-1}(T^jV)=T^j A_i $ has diameter less than $\d$ for each $i\in \{1,2,\ldots, k\}$ and each $j\in \{-L,-L+1,\ldots, L\}$. In particular, $B_i\cap (\pi')^{-1}(V)=A_i$ for each $i\in \{1,2,\ldots, k\}$.
\end{itemize}

\medskip

Let $V^*=\varsigma^{-1}(V)$, $A_i^*=(\varsigma^*)^{-1}(A_i)$, $B_i^*=(\varsigma^*)^{-1}(B_i)$ for each $i\in \{1,2,\ldots, k\}$.
Then
$$(\pi^*)^{-1}(V^*)= \bigcup_{i=1}^k A^*_i, \quad X^*=\bigcup_{i=1}^k B^*_i,$$
and $B^*_1, B^*_2,\ldots, B^*_k$ are pairwise disjoint nonempty clopen subsets of $X^*$. Since $x_0'\in A_1$, we have that $x_0^*\in A_1^*$.
Also we have that
\begin{itemize}
    \item for each $i\in \{1,2,\ldots, k\}$, $(\pi^*)^{-1}(y^*)\cap A^*_i\neq \emptyset$ for each $y^*\in V^*$;
  \item $(\pi^*)^{-1}(y^*)\cap B^*_i\neq \emptyset$ for each $y^*\in Y^*$;
  \item $T^j A^*_1=B_1^*\cap (\pi^*)^{-1}(T^j V^*),\ T^j A^*_2=B_2^*\cap (\pi^*)^{-1}(T^jV^*),  \ldots, T^jA^*_k=B_k^*\cap (\pi^*)^{-1}(T^jV^*)$ are pairwise disjoint nonempty clopen subsets of $X^*$ for all $j\in \{-L,-L+1,\ldots, L\}$. In particular,  $A^*_1=B_1^*\cap (\pi^*)^{-1}(V^*),\ A^*_2=B_2^*\cap (\pi^*)^{-1}(V^*),  \ldots, A^*_k=B_k^*\cap (\pi^*)^{-1}(V^*)$ are pairwise disjoint nonempty clopen subsets of $X^*$.
\end{itemize}

\medskip

\medskip

\noindent {\bf Step 3.} {\em For given $\ep>0$, let $\{B_1^*,\ldots, B_k^*\}$ be the partition of $X^*$ in Step 2 with respect to $\ep$. Define $\pmb{\xi}=(\pmb{\xi}_n)_{n\in {\Z}}=\Big((\xi^{(1)}_n, \xi^{(2)}_n,\cdots, \xi^{(r)}_n) \Big)_{n\in \Z} \in \Sigma_k=\Big(\{1,2,\ldots, k\}^r\Big)^\Z$ as follows:
\begin{equation}\label{e2}
  T^{q_i(n)}x_0^*\in B_j^* \Longleftrightarrow \xi_n^{(i)}=j,\ \text{where}\ i\in \{1,2,\ldots, r\}, j\in \{1,2,\ldots, k\}.
\end{equation}
Then
\begin{itemize}
  \item $\pmb \xi$ is a transitive point of $(\Sigma_k,\sigma)$;
  \item for any minimal t.d.s. $(W,H)$ and $w\in W$, we have that
\begin{equation}\label{e3}
  \overline{\O}\Big((w,\pmb{\xi}), H \times \sigma\Big)=W \times \Sigma_k.
\end{equation}
\end{itemize}

In particular, in $(((Y^*)^r)^\Z\times \Sigma_k, \sigma\times \sigma)$, we have that
\begin{equation}\label{e4}
  \overline{\O}\Big((\w_{y_0^*}^{\A'},\pmb\xi), \sigma\times \sigma\Big)=W_{y_0^*}^{\A'}\times \Sigma_k.
\end{equation}

}

Proof of Step 3 is the same to the one in Subsection \ref{subsec7.3}.

\medskip

\noindent {\bf Step 4.} {\em
Let $(Z, S)$ be a minimal t.d.s. and $z\in Z$. We show that for each $\ep>0$ and $x_0'\in X_0'$,
$$N_{z,x_0',\ep}=\{n\in \Z: \rho_Z(S^n z,z)<\ep \ \text{and}\ \rho_{X'}(T^{p_1(n)}x_0', x_0')<\ep,\ \ldots,\ \rho_{X'}(T^{p_d(n)}x_0', x_0')<\ep\}$$
is piecewise syndetic.}

\begin{proof}
For each given $\ep>0$, we have the constructions and results in Step 2 and Step 3.
By Step 3, we have that
$$\overline{\O}\Big((\w_{y_0^*}^{\A'},\pmb\xi), \sigma\times \sigma\Big)=W_{y_0^*}^{\A'}\times \Sigma_k.$$
Since $y_0^*\in Y_0^*$, $\w_{y_0^*}^{\A'}$ is a distal point for $\sigma$ and it is product recurrent. Thus $(z, \w_{y_0^*}^{\A'})$ is a minimal point of $(Z\times W_{y_0^*}^{\A'}, S\times \sigma)$. By Step 3 again, we have that
$$\overline{\O}\Big((z, \w_{y_0^*}^{\A'},\pmb\xi),(S\times \sigma)\times \sigma\Big)=\overline{\O}\big((z,\w_{y_0^*}^{\A'}), S\times \sigma\big)\times \Sigma_k.$$
It follows that
$$(z, \w_{y_0^*}^{\A'}, \pmb\xi)\in \overline{\O}\Big((z, \w_{y_0^*}^{\A'}, \pmb\xi), (S\times \sigma)\times \sigma\Big),$$
and $(z, \w_{y_0^*}^{\A'}, \pmb\xi)$ is a piecewise syndetic recurrent point by Theorem \ref{prop-M} since minimal points are dense in $\overline{\O}\Big((z, \w_{y_0^*}^{\A'},\pmb\xi),(S\times \sigma)\times \sigma\Big).$

Recall that $y_0^*\in V^*$ and $T^{q_i(0)}x_0^*=x_0^*\in A_1^*\subseteq B^*_1$, $\forall i\in \{1,2,\ldots,r\}$. Hence $\pmb\xi_0=(1,1,\ldots,1)=1^{\otimes r}$.
Let
$$[\pmb{\xi}_{-l}^l]=\{\pmb\w\in \Sigma_k: \pmb\w_j=\pmb\xi_j: -l\le j\le l\}.$$
It is a neighbourhood of $\pmb\xi$ in $\Sigma_k$. Let
$$W^*=\{{\bf y}\in \big((Y^*)^r\big)^\Z: {\bf y}_j\in T^{q_1(j)}V^*\times T^{q_2(j)}V^*\times \cdots \times T^{q_r(j)}V^*, -l\le j\le l\}.$$
It is a neighbourhood of $\w_{y^*_0}^{\A'}$ in $\big((Y^*)^r\big)^\Z$.
So $ B(z,\ep)\times W^* \times [\pmb\xi_{-l}^l]$ is a neighbourhood of $(z, \w_{y_0^*}^{\A'}, \pmb\xi)$. Thus
$$N_2\triangleq \{n\in \Z:(S^nz, \sigma^n\w_{y_0^*}^\A, \sigma^n\pmb \xi)\in B(z,\ep)\times W^* \times [\pmb\xi_{-l}^l]\}$$
is piecewise syndetic.

Let $n\in N_2$.
By $\sigma^{n}\w_{y_0^*}^\A\in W^*$, we have $T^{q_i(n+j)}y_0^*\in T^{q_i(j)}V^*$ for all $i\in \{1,2,\ldots,r\}, j\in \{-l,-l+1,\ldots, l\}$. Hence for all $i\in \{1,2,\ldots, r\}, j\in \{-l, -l+1,\ldots, l\}$
\begin{equation}\label{e5}
  T^{q_i(n+j)}x_0^*\in (\pi^*)^{-1}(T^{q_i(j)}V^*)=\bigcup_{i=1}^k T^{q_i(j)}A_i^*.
\end{equation}
Since $\sigma^n\pmb\xi\in [\pmb\xi_{-l}^l]$, we have $\pmb\xi_{n+j}=\pmb\xi_j$ for all $j\in \{-l,-l +1,\ldots, l\}$. By \eqref{e2}, for all $i\in \{1,2,\ldots, r\}, j\in \{-l,-l+1,\ldots, l\}$
\begin{equation}\label{e6}
  T^{q_i(j)}x_0^*, T^{q_i(n+j)}x_0^*\in B_{\xi_j^{(i)}}^*.
\end{equation}
By \eqref{e5}, \eqref{e6} and the fact $|q_i(j)|\le L$, we have
$$ T^{q_i(j)}x_0^*, T^{q_i(n+j)}x_0^* \in T^{q_i(j)}A_{\xi_j^{(i)}}^*=B_{\xi_j^{(i)}}^*\cap (\pi^*)^{-1}(T^{q_i(j)}V^*)$$
for all $i\in \{1,2,\ldots, r\}, j\in \{-l,-l+1,\ldots, l\}$. So
$$ T^{q_i(j)}x_0', T^{q_i(n+j)}x_0' \in T^{q_i(j)}A_{\xi_j^{(i)}}=B_{\xi_j^{(i)}}\cap (\pi')^{-1}(T^{q_i(j)}V)$$
for all $i\in \{1,2,\ldots, r\}, j\in \{-l,-l+1,\ldots, l\}$.
Note that $B_i\cap (\pi')^{-1}(T^jV)=T^j A_i $ has diameter less than $\d$ for each $i\in \{1,2,\ldots, k\}$ and each $j\in \{-L,-L+1,\ldots, L\}$. In particular, we have that for all $i\in \{1,2,\ldots, r\}, j\in \{-l,-l+1,\ldots, l\}$,
$$\rho_{X'}(T^{q_i(j)}x_0', T^{q_i(n+j)}x_0')<\d.$$
By the definition of $\d$, we have
$$\rho_{X'}(T^{q_i(n+j)-q_i(j)}x_0', x_0')<\ep.$$
Combining with $S^nz\in B(z,\ep)$, we have
$$\rho_Z(S^nz,z)<\ep, \ \ \rho_{X'}(T^{q_i(n+j)-q_i(j)}x_0', x_0')<\ep,\ \forall  1\le i\le r, -l\le j\le l.$$
Since $\{p_{1},\ldots, p_d\}\subseteq \{q_i^{[j]}=q_i(n+j)-q_i(j): 1\le i\le r, -l\le j\le l\}$, we have
$$\rho_Z(S^n z,z)<\ep \ \text{and}\ \rho_{X'}(T^{p_1(n)}x_0', x_0')<\ep,\ \ldots,\ \rho_{X'}(T^{p_d(n)}x_0', x_0')<\ep.$$
So
$$N_2\subseteq N_{z,x_0',\ep},$$
and $N_{z,x_0',\ep}$ is piecewise syndetic.
The proof of Step 4. is complete.
\end{proof}

\medskip

\noindent {\bf Step 5.} {\em
Let $(Z, S)$ be a minimal t.d.s. and $z\in Z$. For each $\ep>0$ and $x_0\in X_\bullet$,
$$N_{z,x_0,\ep}=\{n\in \Z: \rho_Z(S^n z,z)<\ep \ \text{and}\ \rho_{X}(T^{p_1(n)}x_0, x_0)<\ep,\ \ldots,\ \rho_{X}(T^{p_d(n)}x_0, x_0)<\ep \}$$
is a piecewise syndetic subset.

In particular, there is some sequence $n_i\to\infty, i\to\infty$ such that
\begin{equation*}\label{}
  S^{n_i}z\to z,\ T^{p_1(n_i)}x_0\to x_0, \ T^{p_2(n_i)}x_0\to x_0,\ldots, \ T^{p_d(n_i)}x_0\to x_0, \ i\to\infty.
\end{equation*}
}

It follows from  Step 4, and the whole proof is completed. \hfill $\square$

\appendix

\section{Zero dimensional extensions}

In this section we give a proof of Theorem \ref{thm-0-dim-relative}.

\subsection{Inverse limits}

For a family of t.d.s. $\{(X_i, T_i)\}_{i\in I}$ where $I$ is a directed set, one can define the inverse system. We give definition for $I=\N$, general definition is similar.
For more information about inverse limits, see for example \cite[E.12]{Vr}.

Let $\{(X_i, T_i)\}_{i=1}^\infty$ be t.d.s. For any $i\ge j$ there are factor maps $\phi_{ij}: X_i\rightarrow X_j$ such that $\phi_{ii}=\id$  and
$$\phi_{jk}\circ \phi_{ij}=\phi_{ik}, \ \forall i\ge j\ge k.$$
$$\xymatrix@R=0.5cm{
  X_i \ar[dd]_{\phi_{ik}} \ar[dr]^{\phi_{ij}}             \\
                & X_j \ar[dl]^{\phi_{jk}}         \\
  X_k                 }
$$
Define a subset $X$ of $\prod_{i=1}^\infty X_i$ as
$$X=\left\{ x=(x_i)_{i=1}^\infty \in \prod_{i=1}^\infty X_i:
\phi_{ij}x_i=x_j, \ \forall i\ge j\right\},$$
with the relative topology induced on $X$ by the product topology on $\prod_{i=1}^\infty X_i$.

For $i\in\N$, let $\pi_i: X\rightarrow X_i, \pi_i(x)=x_i$ be projection. It is east to see that $\phi_{ij}\circ \pi_i=\pi_j, \forall i\ge j$.
$$\xymatrix{
                & X\ar[dr]^{\pi_j}\ar[dl]_{\pi_i}             \\
 X_i \ar[rr]^{\phi_{ij}} & &     X_j        }
$$
Let $T: X\rightarrow X$ be defined as
$$T\big((x_i)_{i=1}^\infty\big)=(T_ix_i)_{i=1}^\infty,\ \forall (x_i)_{i=1}^\infty\in X.$$
It is clear that $$\pi_i: (X,T)\rightarrow (X_i, T_i)$$
is a factor map. We call that $(X,T)$ is the {\em inverse limit} of $\{(X_i, T_i)\}_{i=1}^\infty$, denoted by
$$(X,T)=\overleftarrow{\lim_{i\to\infty}} (X_i, T_i) \quad \text{or} \quad (X, T)={\rm inv\ }{\lim_{i\to\infty}} (X_i, T_i) .$$

\subsection{Admissible covers}

Let $X$ be a topological space.
If $\a,\b$ are covers of $X$, their {\em join} $\a\vee\b$ is the cover by all nonempty subset of the form $A\cap B$ where $A\in \a$ and $B\in \b$. Similarly we can define the join $\bigvee_{i=1}^n\a_i$ of any finite collection of covers of $X$.

A cover $\b$ is a {\em refinement} of a cover $\a$, written by $a\prec \b$  or $\b\succ \a$, if every member of $\b$ is a subset of a member of $\a$. It is clear that $\a\prec \a\vee \b$ for any covers $\a, \b$. Also if $\b$ is a subcover of $\a$, $\a\prec\b$.

Let $\partial\a=\bigcup_{A\in \a}\partial A$. It is clear that for any two covers $\a,\b$, $\partial(\a\vee\b)\subseteq \partial\a\cup\partial \b$.

\begin{de}
Let $X$ be a compact metric space. A cover $\a=\{A_1,\ldots, A_k\}$ of $X$ is called an {\em admissible cover} if each element is closed, $A_i\cap A_j=\partial A_i\cap \partial A_j$ if $i\neq j$ and $\partial\a=\bigcup_{i=1}^k\partial A_i$ having no interior.
\end{de}

\begin{lem}\label{lem1}
Let $X$ be a compact metric space and let $\a=\{A_1,\ldots, A_k\}$ be a closed cover of $X$. Then $\a$ is admissible if and only if ${\rm int}(A_i)\cap A_j=\emptyset$ for $i\neq j$ and the disjoint union $\bigcup_{i=1}^k{\rm int}(A_i)$ is dense in $X$.
\end{lem}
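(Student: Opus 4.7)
The plan is to prove the two directions of the equivalence separately, exploiting the elementary decomposition $A_i=\mathrm{int}(A_i)\sqcup\partial A_i$ which holds because each $A_i$ is closed. The whole argument is a point-chasing exercise; there is no serious analytic obstacle, and the main care needed is to keep track of which clause of the definition of ``admissible'' is being used at each step.

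For the forward direction, I would assume $\alpha$ is admissible and derive the two conditions. Given $i\neq j$, any point $x\in\mathrm{int}(A_i)\cap A_j$ would lie in $A_i\cap A_j=\partial A_i\cap\partial A_j$, but $x\in\mathrm{int}(A_i)$ forbids $x\in\partial A_i$; this contradiction gives $\mathrm{int}(A_i)\cap A_j=\emptyset$, which also shows the union is disjoint. For density, the assumption that $\partial\alpha$ has empty interior means $X\setminus\partial\alpha$ is dense in $X$. For any $x$ in this dense set, $\alpha$ being a cover places $x$ in some $A_i$, and $x\notin\partial A_i$ together with $A_i$ being closed forces $x\in\mathrm{int}(A_i)$. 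Hence $X\setminus\partial\alpha\subseteq\bigcup_i\mathrm{int}(A_i)$, proving density.

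For the reverse direction, suppose $\mathrm{int}(A_i)\cap A_j=\emptyset$ for $i\neq j$ and $\bigcup_i\mathrm{int}(A_i)$ is dense. To verify $A_i\cap A_j=\partial A_i\cap\partial A_j$ when $i\neq j$, take $x\in A_i\cap A_j$: the disjointness hypothesis rules out $x\in\mathrm{int}(A_i)$, so $x\in A_i\setminus\mathrm{int}(A_i)=\partial A_i$, and symmetrically $x\in\partial A_j$; the reverse inclusion follows from $\partial A_\ell\subseteq A_\ell$ since each $A_\ell$ is closed. To see that $\partial\alpha$ has empty interior, I would show $\bigcup_i\mathrm{int}(A_i)\subseteq X\setminus\partial\alpha$: any $x\in\mathrm{int}(A_i)$ avoids $\partial A_i$ by definition, and avoids each $A_j$ for $j\neq i$ (hence also $\partial A_j$) by the disjointness hypothesis. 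Density of $\bigcup_i\mathrm{int}(A_i)$ then forces density of $X\setminus\partial\alpha$, i.e., $\partial\alpha$ has no interior.

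There is essentially no hard step; the only point that requires a moment of attention is the use of closedness of $A_i$ to justify the decomposition $A_i=\mathrm{int}(A_i)\cup\partial A_i$ (with $\mathrm{int}(A_i)\cap\partial A_i=\emptyset$), which is what lets the density of $X\setminus\partial\alpha$ translate into density of the union of interiors and vice versa. Everything else is direct from the definitions.
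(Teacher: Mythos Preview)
Your proof is correct and follows essentially the same approach as the paper: both directions use the decomposition $A_i=\intt(A_i)\sqcup\partial A_i$ for closed $A_i$ and translate between ``$\partial\a$ has empty interior'' and ``$\bigcup_i\intt(A_i)$ is dense'' via the complement. If anything, your reverse direction is slightly more explicit than the paper's, since you spell out why $\bigcup_i\intt(A_i)$ is disjoint from $\partial\a$ before invoking density.
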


\begin{proof}
Let $\a=\{A_1,\ldots, A_k\}$ be an admissible cover. Since $A_i$ is closed, $A_i=\partial A_i\cup {\rm int}(A_i)$ for all $i\in \{1,2,\ldots, k\}$.
Then by the fact $A_i\cap A_j=\partial A_i\cap \partial A_j$ for $i\neq j$, we have
$${\rm int}(A_i)\cap A_j=\big(A_i\setminus \partial A_i\big)\cap A_j=\emptyset\ \text{for} \ i\neq j.$$ Since $X=\bigcup_{i=1}^kA_i=\bigcup_{i=1}^k \Big(\partial A_i\cup {\rm int}(A_i)\Big)=\big(\bigcup_{i=1}^k \partial A_i\big)\cup \big(\bigcup_{i=1}^k {\rm int}(A_i)\big)$ and $\bigcup_{i=1}^k\partial A_i$ has no interior, we have that $\bigcup_{i=1}^k{\rm int}(A_i)$ is dense in $X$.

Now we show the converse. Let $\a=\{A_1,\ldots, A_k\}$ be a closed cover.
For $i\neq j$, by the fact  ${\rm int}(A_i)\cap A_j=\emptyset$ and ${\rm int}(A_j)\cap A_i=\emptyset$, we have that
$$A_i\cap A_j=(\partial A_i\cup {\rm int}(A_i))\cap (\partial A_j\cup {\rm int}(A_j))=\partial A_i\cap \partial A_j.$$
Since $X=\big(\bigcup_{i=1}^k \partial A_i\big)\cup \big(\bigcup_{i=1}^k {\rm int}(A_i)\big)$ and  $\bigcup_{i=1}^k{\rm int}(A_i)$ is dense in $X$, we have that $\bigcup_{i=1}^k\partial A_i$ has no interior. That is, $\a$ is admissible.
\end{proof}

\begin{rem}\label{rem-lem1}
By the proof of Lemma \ref{lem1}, we also have that a closed finite cover $\a$ is admissible if and only if ${\rm int}(A)\cap B=\emptyset$ for $A\neq B\in \a$ and $\partial\a$ having no interior.
\end{rem}

\begin{lem}\label{lem2}
Let $X$ be a compact metric space. For any $\d>0$, there is some admissible cover $\a$ with ${\rm diam}(\a)=\min_{A\in \a} {\rm diam}(A)<\d$ and $A\neq \emptyset$ for all $A\in \a$.
\end{lem}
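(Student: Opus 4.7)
My plan is to build the admissible cover in three stages: first produce a finite open ball cover of diameter less than $\delta$ whose boundaries are nowhere dense; then stratify the complement of the boundary set into small open cells according to the pattern of ball memberships; and finally take closures of amalgamated cells and apply Lemma~\ref{lem1}.

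Fix $\delta>0$ and take a finite $\delta/3$-net $x_1,\ldots,x_k$ in $X$. For each $i$, the spheres $\{y\in X:d(y,x_i)=r\}$ with $r\in(\delta/3,\delta/2)$ are pairwise disjoint, so since $X$ is separable only countably many of them can have non-empty interior, and I can choose $r_i\in(\delta/3,\delta/2)$ with $\partial B(x_i,r_i)$ nowhere dense. Set $U_i=B(x_i,r_i)$; then $\bigcup_i U_i=X$ by the net property, $\mathrm{diam}(U_i)<\delta$, and the closed set $N:=\bigcup_i\partial U_i$ is nowhere dense as a finite union of such sets. On the open dense set $X\setminus N$ the membership pattern $S(x)=\{i:x\in U_i\}$ is locally constant, so the cells $C_S=\{x\in X\setminus N:S(x)=S\}$ are open in $X$, pairwise disjoint over non-empty $S\subseteq\{1,\ldots,k\}$, and cover $X\setminus N$.

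For each non-empty $S$ pick $i(S)\in S$ (say the smallest index), amalgamate $E_i:=\bigcup_{S:i(S)=i}C_S$, and set $A_i:=\overline{E_i}$, discarding any empty ones. Each $A_i$ is closed with $\mathrm{diam}(A_i)\le\mathrm{diam}(\overline{U_i})<\delta$, and the family covers $X$ because any $y\in N$ is a limit of points in $X\setminus N=\bigsqcup_j E_j$, which by finiteness of the index set must cluster inside some single $E_j$, giving $y\in A_j$. To verify admissibility via Lemma~\ref{lem1} note first that $\bigcup_i\mathrm{int}(A_i)\supseteq\bigcup_i E_i=X\setminus N$ is dense. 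The key identity is $A_i\cap(X\setminus N)=E_i$: if $x\in\overline{E_i}\cap C_S$ then $C_S$ is an open neighbourhood of $x$ meeting $E_i$, and as distinct cells are disjoint and $E_i$ is the disjoint union of the cells $C_{S'}$ with $i(S')=i$, this forces $i(S)=i$ and hence $x\in E_i$. Consequently $A_i\cap E_j\subseteq E_i\cap E_j=\emptyset$ for $i\ne j$, and since $E_j$ is dense in $A_j$, any point of $\mathrm{int}(A_i)\cap A_j$ would yield a point of $E_j\cap A_i$, a contradiction.

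The only technically delicate step is the generic choice of radii that makes each $\partial U_i$ nowhere dense, which leverages separability of $X$ to limit the spheres with non-empty interior to a countable set; once this is arranged the decoupling $A_i\cap(X\setminus N)=E_i$ renders the admissibility check essentially formal.
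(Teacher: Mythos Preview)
Your proof is correct, but it takes a noticeably different route from the paper's. The paper uses the standard ``ordered difference'' construction: starting from an irredundant open ball cover $\{B_1,\ldots,B_k\}$ of radius $\delta/3$, it sets $A_1=\overline{B_1}$ and $A_n=\overline{B_n}\setminus(B_1\cup\cdots\cup B_{n-1})$ for $n\ge 2$, and then verifies directly that $A_i\cap A_j=\partial A_i\cap\partial A_j$ and that $\partial\alpha\subseteq\bigcup_i\partial B_i$ has empty interior. Your approach instead builds a genuine cell decomposition of $X\setminus N$ via membership patterns, amalgamates cells by a choice function $i(S)$, and then invokes Lemma~\ref{lem1}; the key technical point is your identity $A_i\cap(X\setminus N)=E_i$, which cleanly separates the interiors. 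The paper's argument is shorter and more standard, while yours is more structural and makes the role of Lemma~\ref{lem1} explicit. One remark: your careful choice of radii $r_i$ so that the metric spheres are nowhere dense is not actually needed, since the \emph{topological} boundary of any open set is automatically nowhere dense (if $V$ were open with $V\subseteq\overline{U}\setminus U$ for $U$ open, then $V\cap U=\emptyset$ would contradict $V\subseteq\overline{U}$); so you could have taken any radii in $(\delta/3,\delta/2)$.
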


\begin{proof}
Let $\d>0$. Since $X$ is compact, we can find an open cover $\{B_1,\ldots, B_k\}$ by open balls of radius $\d/3$ and it has no proper subcovers. Let $A_1=\overline{B_1}$, and for $n\in \{2,3,\ldots, k\}$ let
$$A_n=\overline{B_n}\setminus \big(B_1\cup \cdots \cup B_{n-1}\big).$$
Then $\a=\{A_1,\ldots, A_k\}$ is a closed cover with ${\rm diam}(\a)<\d$.

First we show that $A_i\neq \emptyset$ for all $i\in \{1,2,\ldots, k\}$. Otherwise there is some $i\in \{1,2,\ldots, k\}$ such that $\overline{B_i}\setminus {(B_1\cup\cdots \cup B_{i-1})} =  A_i =\emptyset$. That is, $\overline {B_i}\subset {B_1\cup\cdots \cup B_{i-1}}$. Hence ${B_i}\subset B_1\cup \cdots \cup B_{i-1}$. It follows that $\{B_j\}_{j=1}^k\setminus \{B_i\}$ is a cover of $X$, which contradicts with the assumption that $\{B_j\}_{j=1}^k$ has no proper subcover.

Now we show that it is admissible. First we show that for $i<j$, $A_i\cap A_j=\partial A_i\cap \partial A_j$. Since ${\rm int}(A_j)=B_j\setminus \overline{(B_1\cup\cdots \cup B_{j-1})}$, we have
$$A_i\cap {\rm int} (A_j)=\Big(\overline{B_i}\setminus (B_1\cup\cdots \cup B_{i-1})\Big)\cap \Big(B_j\setminus \overline{(B_1\cup\cdots \cup B_{j-1})}\Big)=\emptyset$$
and hence $A_i\cap A_j= A_i\cap \partial A_j$. Note that
\begin{equation*}
  \begin{split}
     \partial A_j\cap {\rm int}(A_i)
   = & \partial \Big(\overline{B_j}\setminus (B_1\cup \cdots \cup B_{j-1})\Big)\cap \Big(B_i\setminus \overline{(B_1\cup\cdots \cup B_{i-1})}\Big)
    \\ \subseteq & B_i\setminus \big(B_1\cup\cdots \cup B_{j-1}\big)=\emptyset.
   \end{split}
\end{equation*}
It follows that $A_i\cap A_j= A_i\cap \partial A_j=\partial A_i\cap \partial A_j$. Also $$\partial\a=\bigcup_{i=1}^k\partial A_i=\bigcup_{i=1}^k\partial  \Big(\overline{B_i}\setminus (B_1\cup \cdots \cup B_{i-1})\Big)\subseteq \bigcup_{i=1}^k \bigcup_{j=1}^i\partial B_j =\bigcup_{i=1}^k \partial B_i,$$
which has no interior.
\end{proof}

\begin{lem}\label{lem3}
Let $X$ be a compact metric space. If $\a, \b$ are admissible covers of $X$, then $\a \vee \b$ is also admissible.

\end{lem}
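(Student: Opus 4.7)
The plan is to verify admissibility of $\a\vee\b$ by means of the convenient reformulation provided by Lemma \ref{lem1} and Remark \ref{rem-lem1}: a finite closed cover $\gamma$ of $X$ is admissible if and only if $\intt(C)\cap D=\emptyset$ for all distinct $C,D\in \gamma$ and $\partial\gamma$ has empty interior. Elements of $\a\vee\b$ are intersections of closed sets, hence closed, so only these two conditions need checking.

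For the first condition, let $C=A_i\cap B_j$ and $D=A_{i'}\cap B_{j'}$ be two distinct members of $\a\vee\b$; then $(i,j)\neq (i',j')$, so either $i\neq i'$ or $j\neq j'$. In the first case, since $\intt(C)\subseteq \intt(A_i)$ and admissibility of $\a$ gives $\intt(A_i)\cap A_{i'}=\emptyset$, one has $\intt(C)\cap D\subseteq \intt(A_i)\cap A_{i'}=\emptyset$; the case $j\neq j'$ is symmetric using admissibility of $\b$.

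For the second condition I will use the elementary identity $\partial(A\cap B)\subseteq \partial A\cup \partial B$ for any subsets $A,B$ of $X$, whence
$$\partial(\a\vee\b)=\bigcup_{\substack{A\in\a,\,B\in\b \\ A\cap B\neq\emptyset}}\partial(A\cap B)\subseteq \partial\a\cup\partial\b.$$
By admissibility of $\a$ and $\b$, both $\partial\a$ and $\partial\b$ are closed sets with empty interior. A short argument (if a nonempty open $U$ were contained in $\partial\a\cup\partial\b$, then $U\setminus\partial\a$ would be a nonempty open subset of $\partial\b$, contradicting $\intt(\partial\b)=\emptyset$) shows that the union of two closed nowhere dense sets is nowhere dense, so $\partial\a\cup\partial\b$ has empty interior; hence so does $\partial(\a\vee\b)$.

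There is no real obstacle here: once the characterization from Lemma \ref{lem1}/Remark \ref{rem-lem1} is invoked, the lemma follows from the set-theoretic inclusions $\intt(A_i\cap B_j)\subseteq \intt(A_i)\cap \intt(B_j)$ and $\partial(A\cap B)\subseteq \partial A\cup\partial B$ together with the stability of ``closed with empty interior'' under finite unions. The only point that requires a line of care is the latter stability statement, which I would justify directly as above rather than appealing to Baire category.
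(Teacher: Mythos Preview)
Your proof is correct and follows essentially the same route as the paper's: both invoke the characterization from Remark \ref{rem-lem1}, verify $\intt(A\cap B)\cap(A'\cap B')=\emptyset$ via $\intt(A\cap B)\subseteq\intt(A)\cap\intt(B)$ together with admissibility of $\a$ and $\b$, and then use $\partial(\a\vee\b)\subseteq\partial\a\cup\partial\b$ to conclude that $\partial(\a\vee\b)$ has empty interior. Your only addition is the explicit one-line justification that a union of two closed nowhere dense sets is nowhere dense, which the paper leaves implicit.
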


\begin{proof}
Let $\a, \b$ be admissible covers of $X$. It is clear that $\a\vee \b$ is a closed cover of $X$. Let $A\cap B,A'\cap B'\in \a\vee \b$ where $A,A'\in \a, B,B'\in \b$. We show that if $A\cap B\neq A'\cap B'$, then
$${\rm int}(A\cap B)\cap (A'\cap B')=\emptyset.$$
Since $A\cap B\neq A'\cap B'$, one has that $A\neq A'$ or $B\neq B'$. By Lemma \ref{lem1}, ${\rm int} (A)\cap A'=\emptyset$ or ${\rm int}(B)\cap B'=\emptyset$. So
$${\rm int}(A\cap B)\cap (A'\cap B')\subseteq {\rm int}(A)\cap {\rm int}(B)\cap A'\cap B'=\big({\rm int} (A)\cap A'\big)\cap \big({\rm int} (B)\cap B'\big)=\emptyset.$$
Since $\partial (\a\vee\b)\subseteq \partial \a\cup \partial \b$, we have that $\partial (\a\vee\b)$ has no interior. Thus by Remark \ref{rem-lem1} $\a \vee \b$ is admissible.
\end{proof}

\subsection{Symbolic representation}
Let $(X,T)$ be a t.d.s.
Let $\a=\{A_1,\ldots, A_k\}$ be an admissible cover of $X$. Let
\begin{equation}\label{d1}
  D_\a=\bigcup_{n=-\infty}^\infty T^n\biggl(\bigcup_{i=1}^k\partial A_i\biggr)=\bigcup_{n=-\infty}^\infty T^n(\partial \a).
\end{equation}
Then $D_\a$ is a first category subset of $X$ as $\partial\a$ has no interior, so $X\setminus D_\a$ a residual subset of $X$ and it is invariant, i.e. $T(X\setminus D_\a)=X\setminus D_\a$. Define a map
\begin{equation}\label{d2}
\psi_\a: X\setminus D_\a\rightarrow \Sigma_{k}=\{1,2,\ldots,k\}^\Z \ \text{such that}\ x\mapsto (a_n)_{n\in \Z} \quad \text{if}\ T^nx\in A_{a_n}, \ \forall n\in \Z.
\end{equation}
We call $(a_n)_{n\in \Z}$ the {\em $\a$-name} of $x$.
Since $A_i\cap A_j=\partial A_i\cap \partial A_j$ for $i\neq j$, each point $x$ in $X\setminus D_\a$ has a unique $\a$-name and hence $\psi_\a: X\setminus D_\a\rightarrow \Sigma_{k}$
is well defined.

Let $X_\a=\overline{\psi_\a(X\setminus D_\a)}\subseteq \Sigma_k$. Then $\sigma X_\a=X_\a$ and $(X_\a,\sigma)$ is a subshift of $(\Sigma_k, \sigma)$. We call that the t.d.s. $(X_\a,\sigma)$ is the {\em symbolic representation} of $(X,\a)$.

\begin{rem}
\begin{enumerate}
  \item If $X_0\subseteq X\setminus D_\a$ is a dense subset of $X$, we also have that $X_\a=\overline{\psi_\a(X_0)}$.

  \item For each ${\bf a}=(a_n)_{n\in \Z}=\psi_a(x) \in \psi_a(X\setminus D_\a)$, we have that $\displaystyle x\in \bigcap_{n\in \Z}T^{-n}{\rm int}(A_{a_n})$. And for each ${\bf a}=(a_n)_{n\in \Z}\in X_\a$, we have $$\bigcap_{n\in \Z}T^{-n}\overline{{\rm int}(A_{a_n})}\neq \emptyset.$$
      Note if for some $A\in \a$, it has no interior, then $A$ will be useless in the symbolic representation of $(X,\a)$.

  \item Usually, $(X_\a,\sigma)$ is not a factor of $(X,T)$ nor an extension of $(X,T)$. But when the admissible cover $\a$ is also a generator, i.e. for every bisequence $\{A_{a_n}\}_{n\in \Z}$ of members of $\a$ the set $\displaystyle \bigcap_{n=-\infty}^\infty T^{-n}A_{a_n}$ contains at most one point of $X$, $(X_\a, \sigma)$ will be an extension of $(X,T)$. If in addition $(X,T)$ is a zero dimensional t.d.s. then $(X_\a,\sigma)$ is isomorphic to $(X,T)$.


\end{enumerate}
\end{rem}

\begin{lem}\label{lem4}
Let $(X,T)$ be a t.d.s. and let  $\a,\b$ be two admissible covers of $X$. If $\a\prec\b$, then $(X_\b,\sigma)$ is an extension of $(X_\a,\sigma)$.
\end{lem}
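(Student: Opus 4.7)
The plan is to construct an explicit factor map $\pi:(X_\b,\sigma)\to(X_\a,\sigma)$ as a one-block code induced by the refinement. Write $\a=\{A_1,\ldots,A_k\}$ and $\b=\{B_1,\ldots,B_m\}$. Since $\a\prec\b$, choose for each $j\in\{1,\ldots,m\}$ some $f(j)\in\{1,\ldots,k\}$ with $B_j\subseteq A_{f(j)}$, giving a map $f:\{1,\ldots,m\}\to\{1,\ldots,k\}$. Define $\pi:\{1,\ldots,m\}^\Z\to\{1,\ldots,k\}^\Z$ by $\pi((b_n)_{n\in\Z})=(f(b_n))_{n\in\Z}$; this is clearly continuous and commutes with the shift. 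It remains only to verify $\pi(X_\b)=X_\a$.

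The heart of the argument is the pointwise identity $\pi\circ\psi_\b=\psi_\a$ on the set $X_0:=X\setminus(D_\a\cup D_\b)$. Indeed, fix $x\in X_0$ and write $\psi_\b(x)=(b_n)_n$, $\psi_\a(x)=(a_n)_n$. Then $T^nx\in B_{b_n}\subseteq A_{f(b_n)}$ and also $T^nx\in A_{a_n}$. Since $T^nx\notin\partial\a$, the point $T^nx$ lies in the interior of a unique element of $\a$ (by Lemma \ref{lem1} and the admissibility identity $A_i\cap A_j=\partial A_i\cap\partial A_j$), which forces $a_n=f(b_n)$. Hence $\pi\circ\psi_\b=\psi_\a$ on $X_0$.

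To conclude, $X_0$ is residual (hence dense) in $X$ because $D_\a$ and $D_\b$ are each of first category by the definition \eqref{d1} together with admissibility. Using the remark after \eqref{d2}, according to which $X_\a=\overline{\psi_\a(X_0)}$ whenever $X_0\subseteq X\setminus D_\a$ is dense in $X$ (and similarly for $\b$), we get
$$\pi(X_\b)=\pi\bigl(\overline{\psi_\b(X_0)}\bigr)\subseteq\overline{\pi(\psi_\b(X_0))}=\overline{\psi_\a(X_0)}=X_\a,$$
while $\pi(X_\b)\supseteq\psi_\a(X_0)$ is dense in $X_\a$; compactness of $X_\b$ forces $\pi(X_\b)$ to be closed, so $\pi(X_\b)=X_\a$. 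The only genuinely nontrivial point is the local identification $a_n=f(b_n)$, which is an immediate consequence of the admissibility condition that distinct members of $\a$ have disjoint interiors; all other ingredients (one-block codes are continuous and shift-equivariant, and complements of first-category sets are dense in a Baire space) are routine.
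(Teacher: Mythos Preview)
Your proof is correct and follows essentially the same approach as the paper: both construct the one-block code $f$ (the paper calls it $\phi$) induced by the refinement, verify $\pi\circ\psi_\b=\psi_\a$ on a dense residual set, and pass to the closure. The only cosmetic difference is that the paper first observes $\partial\a\subseteq\partial\b$, hence $D_\a\subseteq D_\b$, so your set $X_0=X\setminus(D_\a\cup D_\b)$ is simply $X\setminus D_\b$; the paper then extends the map by uniform continuity rather than by your closure-of-image argument, but the content is identical.
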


\begin{proof}
Let $\a=\{A_1,\ldots, A_k\}$ and $\b=\{B_1,\ldots, B_t\}$. Since $\a,\b$ are admissible and $\a\prec \b$, $t\ge k$ and
$$\partial \a=X\setminus \bigcup_{A\in \a}{\rm int}(A)\subseteq X\setminus \bigcup_{B\in \b}{\rm int}(B)=\partial \b.$$
Thus $D_\a\subseteq D_\b$, and $X\setminus D_\b\subseteq X\setminus D_\a$.

Define a map $\phi: \{1,2,\ldots, t\}\rightarrow \{1,2,\ldots, k\}$ such that $\phi(i)=j$ whenever $B_i\subseteq A_j$ (if $B_i$ is contained in more than one elements of $\a$, then pick smallest $j$ such that $B_i\subseteq A_j$). And $\phi$ induces a map from $\Sigma_t$ to $\Sigma_k$, we still denote it by $\phi$:
$$\phi: \Sigma_t\rightarrow \Sigma_k, \quad (a_n)_{n\in \Z}\mapsto \big(\phi(a_n)\big)_{n\in \Z}.$$
It is clear that $\rho_{\Sigma_k}\big(\phi({\bf a}),\phi({\bf b})\big)\le\rho_{\Sigma_t}\big({\bf a},{\bf b}\big)$ for all ${\bf a}, {\bf b}\in \Sigma_t$, where $\rho_{\Sigma_t},\rho_{\Sigma_k}$ are the metrics of $\Sigma_t$ and $\Sigma_k$ respectively. In particular, $\phi: \Sigma_t\rightarrow \Sigma_k$ is continuous.

Let $\psi_\a:X\setminus D_\a\rightarrow \Sigma_k$ and $\psi_\b:X\setminus D_\b\rightarrow \Sigma_t$  be defined in \eqref{d2}. Note that $X\setminus D_\b\subseteq X\setminus D_\a$ and both are dense in $X$.
We define a map $\phi_{\b\a}: \psi_\b(X\setminus D_\b)\rightarrow \psi_\a(X\setminus D_\b)$ by $\psi_\b(x)\mapsto \psi_\a(x)$ for all $x\in X\setminus D_\b$. Since for all $x\in X\setminus D_\b$, $x$ has a unique $\b$-name $\psi_\b(x)$ and a unique $\a$-name $\psi_\a(x)$, $\phi_{\b\a}: \psi_\b(X\setminus D_\b)\rightarrow \psi_\a(X\setminus D_\b)$ is well defined.
Note that $\phi_{\b\a}=\phi|_{\psi_\b(X\setminus D_\b)}$. It follows that $\phi_{\b\a}: \psi_\b(X\setminus D_\b)\rightarrow \psi_\a(X\setminus D_\b)$ is uniformly continuous and it can  be extended to a continuous map $\phi_{\b\a}: \overline{\psi_\b(X\setminus D_\b)}=X_\b\rightarrow X_\a$.
\begin{equation*}
  \xymatrix@R=0.5cm{
                &         X_\b \ar[dd]^{\phi_{\b\a}}     \\
  X\setminus D_\b \ar[ur]^{\psi_\b} \ar[dr]_{\psi_\a}                 \\
                &         X_\a                 }
\end{equation*}
Since $X\setminus D_\b$ is dense in $X$, $X_\a=\overline{ \psi_\a(X\setminus D_\b)}=\phi_{\b\a}(X_\b)$. It is easy to verify the following commutative diagram
\begin{equation*}
\xymatrix
{
\psi_\b(X\setminus D_\b) \ar[d]_{\phi_{\b\a}} & \psi_\b(X\setminus D_\b) \ar[l]_{\sigma}\ar[d]^{\phi_{\b\a}} \\
\psi_\a(X\setminus D_\b) & \psi_\a(X\setminus D_\b) \ar[l]^{\sigma} ,
}
\end{equation*}
and it follows that $\phi_{\b\a}:(X_\b,\sigma)\rightarrow (X_\a,\sigma)$ is a factor map.
\end{proof}

\begin{rem}\label{rem-lem4}
Let $(X,T)$ be a t.d.s. and let  $\a,\b$ and $\gamma$ be admissible covers of $X$. If $\a\prec\b\prec \gamma$, then by Lemma \ref{lem4} we have factor maps: $\phi_{\b\a}:X_\b\rightarrow X_\a$, $\phi_{\gamma\a}:X_\gamma\rightarrow X_\a$ and $\phi_{\gamma\b}:X_\gamma\rightarrow X_\b$. We claim that $\phi_{\gamma\a}=\phi_{\b\a}\circ \phi_{\gamma\b}$.

\medskip

Since $\a\prec\b\prec\gamma$, we have that $D_\gamma\subseteq D_\b\subseteq D_\a$. Thus $X\setminus D_\gamma\subseteq X\setminus D_\b\subseteq X\setminus D_\a$ and they all are dense in $X$. Restricted on $X\setminus D_\gamma$, we have maps:
$$ \psi_\gamma(X\setminus D_\gamma)\overset{\phi_{\gamma\b}}\longrightarrow \psi_\gamma(X\setminus D_\gamma) \overset{\phi_{\b\a}}\longrightarrow \psi_\a(X\setminus D_\gamma), \psi_\gamma(x)\mapsto \psi_\b(x)\mapsto\psi_\a(x), \forall x\in X\setminus D_\gamma.$$

\begin{equation*}
  \xymatrix@R=0.5cm{
                &        \psi_\gamma(X\setminus D_\gamma)\subseteq X_\gamma \ar[d]^{\phi_{\gamma\b}}
                 \\
              X\setminus D_\gamma \ar[r]^{\psi_\b}  \ar[ur]^{\psi_\gamma}\ar[dr]_{\psi_\a} &    \psi_\b(X\setminus D_\gamma)\subseteq     X_\b   \ar[d]^{\phi_{\b\a}}     \\
                & \psi_\a(X\setminus D_\gamma)\subseteq X_\a
                }
\end{equation*}
Note that on $\psi_\gamma(X\setminus D_\gamma)$, $\phi_{\gamma\a}=\phi_{\b\a}\circ \phi_{\gamma\b}$.
Since $X\setminus D_\gamma$ is dense in $X$, we have $X_\b=\overline{\psi_\b(X\setminus D_\gamma)}$
and $X_\a=\overline{\psi_\a(X\setminus D_\gamma)}$, and it follows
that the following diagram is commutative.
$$\xymatrix@R=0.5cm{
  X_\gamma \ar[dd]_{\phi_{\gamma\a}} \ar[dr]^{\phi_{\gamma\b}}             \\
                & X_\b \ar[dl]^{\phi_{\b\a}}         \\
  X_\a                }
$$
\end{rem}

\begin{lem}\label{lem5}
Let $(X,T)$ be a t.d.s. and let $\a$ be
an admissible cover of $X$. If $(X,T)$ is minimal, then $(X_\a,\sigma)$ is also a minimal t.d.s.
\end{lem}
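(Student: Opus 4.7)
The plan is to show that for every $\mathbf{a}\in X_\a$ and every nonempty relatively open subset $U$ of $X_\a$ the orbit $\{\sigma^n\mathbf{a}:n\in\Z\}$ meets $U$, which gives minimality of $(X_\a,\sigma)$. A basis of the topology on $X_\a$ is given by cylinder sets of the form $U=\{\mathbf{c}\in X_\a:c_j=b_j \text{ for } |j|\le N\}$ for some prescribed word $(b_{-N},\ldots,b_N)$; each such cylinder is clopen in $\Sigma_k$, hence clopen in $X_\a$.

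First I would translate such a nonempty cylinder back to an open set in $X$. Since $\psi_\a(X\setminus D_\a)$ is by definition dense in $X_\a$, any nonempty $U$ of the above form contains some $\psi_\a(x)$ with $x\in X\setminus D_\a$; the defining condition $T^jx\in\intt(A_{b_j})$ for all $|j|\le N$ places $x$ in the nonempty open set $W:=\bigcap_{|j|\le N}T^{-j}\intt(A_{b_j})$ of $X$, and conversely every $x'\in W\setminus D_\a$ satisfies $\psi_\a(x')\in U$.

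Now I would invoke minimality of $(X,T)$: since the translates $\{T^{-n}W\}_{n\in\Z}$ cover $X$, by compactness there is $M\in\N$ with $X=\bigcup_{|n|\le M}T^{-n}W$. Given an arbitrary $\mathbf{a}\in X_\a$, choose $y_k\in X\setminus D_\a$ with $\psi_\a(y_k)\to\mathbf{a}$; for each $k$ pick $n_k\in[-M,M]$ with $T^{n_k}y_k\in W$. Since $[-M,M]\cap\Z$ is finite, after passing to a subsequence we may assume $n_k=n$ is constant. Because $D_\a$ is $T$-invariant, $T^ny_k\in W\setminus D_\a$, so $\sigma^n\psi_\a(y_k)=\psi_\a(T^ny_k)\in U$; letting $k\to\infty$ and using that $U$ is clopen in $X_\a$, we conclude $\sigma^n\mathbf{a}\in U$, as required.

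The only mildly delicate point, hence the main obstacle, is ensuring that a single translation index $n$ survives in the limit $k\to\infty$; this is handled precisely by the uniform bound $M$ coming from minimality of $(X,T)$ together with the compactness of $X$, which reduces the choice of $n_k$ to the finite set $[-M,M]\cap\Z$ and therefore permits the extraction of a constant subsequence.
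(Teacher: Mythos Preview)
Your argument is correct. Both your proof and the paper's proof hinge on the same core observation: a nonempty cylinder in $X_\a$ pulls back to a nonempty open set $W=\bigcap_{|j|\le N}T^{-j}\intt(A_{b_j})$ in $X$, and minimality of $(X,T)$ gives a syndetic/bounded hitting time to $W$.

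The execution differs. The paper avoids your approximation-and-limit step entirely: it shows that for any fixed $x\in X\setminus D_\a$ the point $\psi_\a(x)$ is itself a minimal point of $(\Sigma_k,\sigma)$, because the return-time set $N(x,W)$ is syndetic; since the $T$-orbit of $x$ is dense in $X$ and $\psi_\a$ intertwines $T$ with $\sigma$, the orbit closure of $\psi_\a(x)$ is exactly $X_\a$, hence $X_\a$ is minimal. This buys some economy: no sequence $y_k$, no extraction of a constant $n$, no appeal to clopenness of cylinders. Your route, on the other hand, handles an arbitrary $\mathbf{a}\in X_\a$ directly rather than going through the characterization ``minimal point $+$ orbit closure $=$ minimal system,'' at the cost of the subsequence argument you flag at the end. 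Both are perfectly valid; the paper's version is a touch slicker.
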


\begin{proof}
Let $\a=\{A_1,\ldots, A_k\}$. We show that for each $x\in X\setminus D_\a$, $\psi_\a(x)$ is a minimal point of $(\Sigma_k,\sigma)$.
Let $\psi_\a(x)=(a_n)_{n\in \Z}$. To show that $\psi_\a(x)$ is a minimal point, we need to show that for each $N\in \N$, the word $(a_{-N}, a_{-N+1},\ldots, a_N)$ appears in $(a_n)_{n\in \Z}$ syndetically.

By definition of $\psi_\a$, we have that for each $N\in \N$, $x\in \bigcap_{j=-N}^NT^{-j}{\rm int}(A_{a_j})$. Since $\bigcap_{j=-N}^NT^{-j}{\rm int}(A_{a_j})$ is open and $x$ is minimal, $$N\left(x,\bigcap_{j=-N}^NT^{-j}{\rm int}(A_{a_j})\right )=\left\{n\in \Z: T^nx\in \bigcap_{j=-N}^NT^{-j}{\rm int}(A_{a_j})\right \}$$ is syndetic. Thus
$$\left \{n\in \Z: (a_{n-N}, a_{n-N+1},\ldots, a_{n+N})=(a_{-N}, a_{-N+1},\ldots, a_N) \right\}\supseteq N\left(x,\bigcap_{j=-N}^NT^{-j}{\rm int}(A_{a_j})\right )$$
is a syndetic subset.

Since $(X,T)$ is minimal, for each $x\in X\setminus D_\a$ the orbit $\{T^nx: n\in \Z\}\subseteq X\setminus D_\a$ is dense in $X$. Thus $X_\a=\overline{\psi_\a(\{T^nx: n\in \Z\})}$. But $\psi_\a(\{T^nx: n\in \Z\})=\{\sigma^n\psi_\a(x): n\in \Z\}$ is the orbit of $\psi_\a(x)$ in $\Sigma_k$. Hence $(X_\a, \sigma)$ is minimal.
\end{proof}

\subsection{Almost 1-1 symbolic extensions}

\begin{thm}\label{thm-0-dim}
Let $(X,T)$ be a t.d.s. Then it has an almost one to one zero dimensional extension, i.e. there is a zero dimensional t.d.s. $(X',T')$ such that it is an almost one to one extension of $(X,T)$.
If in addition $(X,T)$ is minimal, then $(X',T')$ is also minimal.
\end{thm}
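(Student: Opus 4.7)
The plan is to realize the desired extension as an inverse limit of symbolic representations attached to a shrinking refinement of admissible covers. First I would use Lemma \ref{lem2} to pick admissible covers $\alpha_n$ of $X$ with $\diam(\alpha_n) < 1/n$, and by Lemma \ref{lem3} set $\beta_n = \alpha_1 \vee \cdots \vee \alpha_n$, obtaining admissible covers with $\beta_1 \prec \beta_2 \prec \cdots$ and $\diam(\beta_n) < 1/n$. Lemma \ref{lem4} together with Remark \ref{rem-lem4} gives a compatible inverse system of factor maps $\phi_{m,n} : X_{\beta_m} \to X_{\beta_n}$ (for $m \ge n$) between the subshifts $(X_{\beta_n}, \sigma)$, so its inverse limit $(X', T')$ is zero-dimensional as an inverse limit of zero-dimensional spaces.

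To construct a factor map $\tau : X' \to X$, take $(y_n)_n \in X'$ with $y_n = (a^{(n)}_k)_{k \in \Z}$ and associated cells $A^{(n)}_{y_n(k)} \in \beta_n$. Compatibility of the inverse system forces $A^{(n+1)}_{y_{n+1}(k)} \subseteq A^{(n)}_{y_n(k)}$; density of $\psi_{\beta_n}(X \setminus D_{\beta_n})$ in $X_{\beta_n}$ together with compactness implies each finite intersection $\bigcap_{|k| \le N} T^{-k} A^{(n)}_{y_n(k)}$ is nonempty. Since $\diam(A^{(n)}_{y_n(0)}) \to 0$, the nested intersection $\bigcap_{n, k} T^{-k} A^{(n)}_{y_n(k)}$ is a singleton, which I declare to be $\tau((y_n))$. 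Continuity and $T$-equivariance then follow from the diameter bound and the fact that neighborhoods in $X'$ pin down any prescribed finite coordinate window. Surjectivity, together with a convenient section, is obtained from $\tilde\psi : R \to X'$, $x \mapsto (\psi_{\beta_n}(x))_n$, defined on the residual $T$-invariant set $R := X \setminus \bigcup_n D_{\beta_n}$ and satisfying $\tau \circ \tilde\psi = \id_R$.

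For the almost one-to-one property, if $x \in R$ then $T$-invariance of $R$ forces $T^k x$ to lie in the interior of a unique cell of $\beta_n$ for every $k, n$, so each symbol $y_n(k)$ of any preimage in $\tau^{-1}(x)$ is determined, giving $\tau^{-1}(x) = \{\tilde\psi(x)\}$. The set $X'_0 := \{ x' \in X' : \tau^{-1}(\tau(x')) = \{x'\} \}$ is a $G_\delta$ by upper semi-continuity of the fiber map $x' \mapsto \tau^{-1}(\tau(x'))$ combined with Theorem \ref{thm-Fort}. The main obstacle is showing $X'_0$ is dense in the extension $X'$, not merely that $R$ is dense in the base $X$. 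I would handle this by proving $\tilde\psi(R)$ is dense in $X'$: on $X \setminus D_{\beta_{n_0}}$ the assignment $x \mapsto (\psi_{\beta_{n_0}}(x)_k)_{|k| \le N}$ is locally constant, as its value depends only on which open interior-cells of $\beta_{n_0}$ the iterates $T^{-N} x, \ldots, T^N x$ fall into, so any point of $X_{\beta_{n_0}}$ can be approximated on any finite window by a $\psi_{\beta_{n_0}}$-image of a point in the dense invariant set $R$.

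Finally, if $(X, T)$ is minimal, Lemma \ref{lem5} yields minimality of every $(X_{\beta_n}, \sigma)$, and a standard argument shows that an inverse limit of minimal systems with surjective bonding maps is minimal: any closed invariant subset $Y \subseteq X'$ must surject onto each $X_{\beta_n}$ via the canonical projection $\pi_n$, and since cylinders $\pi_n^{-1}(U)$ with $U \subseteq X_{\beta_n}$ open form a basis for the topology of $X'$, this forces $Y = X'$. This delivers Theorem \ref{thm-0-dim}.
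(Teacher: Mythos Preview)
Your proposal is correct and follows essentially the same route as the paper: build a refining sequence of admissible covers, take the inverse limit of the associated symbolic representations, and read off the factor map and the almost one-to-one property from the residual set where all itinerary maps are defined. The only cosmetic difference is that the paper defines $\tau$ by first showing the section $\tilde\psi$ (their $\varphi$) is injective with uniformly continuous inverse on $\tilde\psi(R)$ and then extending, whereas you construct $\tau$ directly via the nested intersection $\bigcap_{n,k} T^{-k}A^{(n)}_{y_n(k)}$; these give the same map, and your explicit argument for the density of $\tilde\psi(R)$ in $X'$ is in fact more detailed than the paper's one-line claim that $\widetilde{X}$ is residual in $X'$.
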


\begin{proof}
By Lemma \ref{lem2} and Lemma \ref{lem3}, we can choose a sequence of closed covers $\{\a_n\}_{n=1}^\infty$ such that for each $n$, $\a_n$ is admissible with ${\rm diam }(\a_n)<\d_n$, and $\a_1\prec \a_2 \prec \cdots$, where $\{\d_n\}_{n=1}^\infty$ is a decreasing sequence with $\d_n\to 0, n\to\infty$. Let $(X_n,\sigma)$ is the symbolic representation of $(X,\a_n)$.

If $i=j$, then we set $\phi_{ii}=\id$. If $i>j$, then we define $\phi_{ij}: X_i\rightarrow X_j$ as $\phi_{ij}=\phi_{\a_i\a_j}$, where $\phi_{\a_i\a_j}$ is defined in Lemma \ref{lem4}.
Thus for any $i\ge j$ we have a factor map $\phi_{ij}: X_i\rightarrow X_j$ such that $\phi_{ii}=\id$  and by Remark \ref{rem-lem4}
$$\phi_{jk}\circ \phi_{ij}=\phi_{ik}, \ \forall i\ge j\ge k.$$
$$\xymatrix@R=0.5cm{
  X_i \ar[dd]_{\phi_{ik}} \ar[dr]^{\phi_{ij}}             \\
                & X_j \ar[dl]^{\phi_{jk}}         \\
  X_k                 }
$$

Let $(X', T')={\rm inv\ }{\lim_{i\to\infty}} (X_i, \sigma).$ That is,
$$X' = \biggl\{ \w = (\w^i)_{i=1}^\infty \in \prod_{i=1}^\infty X_i:
\phi_{ij}(\w^i) = \w^j, \ \forall i\ge j\biggr\},$$
and $T'\Big((\w^i)_{i=1}^\infty\Big)=\Big((\sigma\w^i)_{i=1}^\infty\Big)$.

For $i\in \N$, let $D_i=D_{\a_i}$ and let $\psi_i=\psi_{\a_i}$, where $\psi_{\a_i}$ is defined in \eqref{d2}. Since $\a_1\prec \a_2 \prec\cdots$, one has that $D_1\subseteq D_2\subseteq \cdots$. Let $D_\infty=\bigcup_{i=1}^\infty D_{i}$. Then $D_\infty$ is still a first category subset of $X$, and $X\setminus D_\infty$ is a residual subset of $X$. So for all $i\in \N$, $X_i=\overline{\psi_i(X\setminus D_\infty)}$.
Let $$\varphi: X\setminus D_\infty\rightarrow X', \quad x\mapsto (\psi_i(x))_{i=1}^\infty.$$
And let
$$\widetilde{X}= \varphi(X\setminus D_\infty)=\biggl\{ \w = (\w^i)_{i=1}^\infty \in \prod_{i=1}^\infty X_i:
\w^i=\psi_i(x) , \ \forall i\in \N, \ \text{for some}\ x\in X\setminus D_\infty \biggr\}.$$
It is easy to see that $\widetilde{X}\subseteq X'$ and $\widetilde{X}$ is residual in $X'$.
Now we show that $\varphi: X\setminus D_\infty\rightarrow \widetilde{X}$ is one to one.
Let $x\neq y\in X\setminus D_\infty$. Then $\rho_X(x,y)>0$. Since $\{\d_n\}_{n=1}^\infty$ is a decreasing sequence with $\d_n\to 0, n\to\infty$, there is some $N\in\N$ such that $\d_n<\rho_X(x,y)/3$ for all $n\ge N$. As ${\rm diam} (\a_n)<\d_n$, $x,y$ will be in the different elements of $\a_n$ and we have $(\psi_n(x))_0\neq (\psi_n(y))_0$. In particular, $\psi_n(x)\neq \psi_n(y)$ and $\varphi(x)\neq\varphi(y)$.

Now we show that
$$\tau=\varphi^{-1}: \widetilde{X}\rightarrow X\setminus D_\infty,\quad (\psi_i(x))_{i=1}^\infty\mapsto x$$
is uniformly continuous. Therefore, $\tau$ can be extended to a continuous map $\tau: X'\rightarrow X$.

Let $\rho_i$ be the metric of $X_i$ for all $i\in \N$. That is, for ${\bf x}=(x_n)_{n\in \Z}, {\bf y}=(y_n)_{n\in \Z}\in X_i$,
$$\rho_i({\bf x}, {\bf y})=0\ \text{if}\ {\bf x}={\bf y};\ \  \rho_i({\bf x}, {\bf y})=\frac{1}{m+1}, \ \text{if}\ {\bf x}\not={\bf y}\ \text{and}\ m=\min\{|j|: x_j\neq y_j\}.$$
Let the metric $\rho_{X'}$ of $X'$ be defined as follows: for all $(\w^i)_{i=1}^\infty, (\widetilde{\w}^i)_{i=1}^\infty\in X'$
$$\rho_{X'}\Big((\w^i)_{i=1}^\infty, (\widetilde{\w}^i)_{i=1}^\infty\Big)=\sum_{i=1}^\infty\frac{\rho_i(\w^i,\widetilde{\w}^i)}{2^i}.$$

We need to show that for each $\ep>0$, there is some $\d>0$ such that whenever \linebreak $\rho_{X'}\Big((\psi_i(x))_{i=1}^\infty, (\psi_i(y))_{i=1}^\infty\Big)<\d$, then $\rho_X(x,y)<\ep$, where $\rho_X$ is the metric of $X$.
Since $\d_n\to 0, n\to\infty$, there is some $N\in \N$ such that $\d_N<\ep$. Let $\d<\frac{1}{2^N}$. Then
whenever $\rho_{X'}\Big((\psi_i(x))_{i=1}^\infty, (\psi_i(y))_{i=1}^\infty\Big)<\d$, we have that
$$\rho_N\big(\psi_N(x),\psi_N(y)\big)\le 2^N \rho_{X'}\Big((\psi_i(x))_{i=1}^\infty, (\psi_i(y))_{i=1}^\infty\Big)<2^N\d<1.$$
This implies $(\psi_N(x))_0=(\psi_N(y))_0$, which means that $x,y$ are in the same element of $\a_N$. Thus
$$\rho_X(x,y)\le {\rm diam}(\a_N)< \d_N< \ep.$$
Hence
$\tau: \widetilde{X}\rightarrow X\setminus D_\infty$
is uniformly continuous, and $\tau$ can be extended to a continuous map $\tau: X'\rightarrow X$.
For each $x\in X\setminus D_\infty$,
$$T\circ \tau\Big((\psi_i(x))_{i=1}^\infty\Big)=Tx=\tau\Big((\psi_i(Tx))_{i=1}^\infty\Big)=\tau\circ T'\Big((\psi_i(x))_{i=1}^\infty\Big).$$
Thus $T\circ \tau(x')=\tau\circ T'(x')$ for all $x'\in X'$, i.e. $\tau: (X',T')\rightarrow (X,T)$ is a factor map.
Since $\tau$ is one to one on $\widetilde{X}$, $(X',\sigma)$ is an almost one to one extension of $(X,T)$.

If in addition $(X,T)$ is minimal, then by Lemma \ref{lem5} $(X',T')$ is an inverse limit of minimal t.d.s. and it is also minimal.
\end{proof}

\subsection{Proof of Theorem \ref{thm-0-dim-relative}}

Now we generalize the result to a relative case. We need the following lemma.

\begin{lem}[{\cite[Theorem 4, Page 159]{Kura1}}]\label{lem6}
Let $X={\rm inv\ }{\lim_{i\to\infty}} X_i$, and $Y={\rm inv\ }{\lim_{i\to\infty}} Y_i$. If for each $i\in \N$ there is a map $h_i: X_i\rightarrow Y_i$ such that the following diagram is commutative: for all $i\le j$
\begin{equation*}
\xymatrix
{
X_i \ar[d]_{h_i} & X_j \ar[l]_{\phi_{ji}}\ar[d]^{h_j} \\
Y_i &  Y_j \ar[l]^{\phi_{ji}} ,
}
\end{equation*}
then we may define a map $h: X\rightarrow Y$ such that the following diagram is commutative: for all $i\in\N$
\begin{equation*}
\xymatrix
{
X_i \ar[d]_{h_i} & X \ar[l]_{\pi_i}\ar[d]^{h} \\
Y_i &  Y \ar[l]^{\pi_i} .
}
\end{equation*}
If each $h_i$ is continuous, then $h$ is also continuous.
\end{lem}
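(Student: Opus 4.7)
The plan is to define $h$ coordinatewise by the formula $h(x)=(h_i(x_i))_{i=1}^\infty$ for every $x=(x_i)_{i=1}^\infty\in X$, and then verify the three required properties: that $h(x)$ actually lies in $Y$, that the stated diagram commutes, and that $h$ is continuous when each $h_i$ is. The heart of the argument is a straightforward application of the universal property of the inverse limit, so no deep obstacle is anticipated.

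First, I would check that $h(x)\in Y$ for every $x\in X$. By definition of $Y$ as an inverse limit, this amounts to verifying the bonding relations $\phi_{ji}(h_j(x_j))=h_i(x_i)$ for all $j\ge i$. Since $x\in X$, we have $\phi_{ji}(x_j)=x_i$, and by the hypothesis the diagram
\begin{equation*}
\xymatrix{ X_i \ar[d]_{h_i} & X_j \ar[l]_{\phi_{ji}}\ar[d]^{h_j} \\ Y_i & Y_j \ar[l]^{\phi_{ji}} }
\end{equation*}
commutes, i.e.\ $\phi_{ji}\circ h_j=h_i\circ\phi_{ji}$. Combining these, $\phi_{ji}(h_j(x_j))=h_i(\phi_{ji}(x_j))=h_i(x_i)$, as required.

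Second, for commutativity of the asserted diagram, the equality $\pi_i\circ h=h_i\circ\pi_i$ is immediate from the coordinatewise definition: $\pi_i(h(x))=h_i(x_i)=h_i(\pi_i(x))$.

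Finally, for continuity, note that $h$ is simply the restriction to $X$ of the product map $\prod_{i\in\N}h_i:\prod_{i\in\N}X_i\to\prod_{i\in\N}Y_i$, which is continuous in the product topology whenever each $h_i$ is continuous (continuity of a map into a product reduces to continuity of each projection, and each $\pi_i\circ (\prod h_i)=h_i\circ\pi_i$ is continuous). Since $X$ carries the subspace topology from $\prod X_i$ and $Y$ carries the subspace topology from $\prod Y_i$, the restriction $h:X\to Y$ is continuous. No step requires more than a routine check, so the main task is organizational rather than technical.
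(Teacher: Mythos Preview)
Your proof is correct and is the standard argument for this lemma. The paper itself does not supply a proof; it simply cites Kuratowski \cite[Theorem 4, Page 159]{Kura1}, so there is no paper proof to compare against, but your coordinatewise construction and verification via the universal property of the inverse limit is exactly how one would expect this to be done.
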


\begin{proof}[Proof of Theorem \ref{thm-0-dim-relative}]
First we have the following claim.

\medskip

\noindent {\bf Claim.} \ {\em  Let $\{\d_n\}_{n=1}^\infty$ be a decreasing sequence with $\d_n\to 0, n\to\infty$. There is a sequence of covers $\{\b_n\}_{n=1}^\infty$ of $Y$ and a sequence of covers $\{\a_n\}_{n=1}^\infty$ of $X$ such that
\begin{enumerate}
  \item for each $n\in \N$, $\b_n$ and $\a_n$ are admissible with ${\rm diam }(\b_n)<\d_n$ and ${\rm diam }(\a_n)<\d_n$;
  \item for each $n\in \N$, $\pi^{-1}(\b_n)\prec \a_n$;
  \item $\b_1\prec \b_2 \prec \cdots$ and $\a_1\prec \a_2 \prec \cdots$.
\end{enumerate}
}


\begin{proof}[Proof of Claim]
Let $\{\d_n\}_{n=1}^\infty$ be a strictly decreasing sequence with $\d_n>0$, and $\d_n\to 0, n\to\infty$.
By Lemma \ref{lem2} and Lemma \ref{lem3}, we can choose a sequence of closed covers $\{\b_n\}_{n=1}^\infty$ of $Y$ such that for each $n$, $\b_n$ is admissible with ${\rm diam }(\b_n)<\d_n$ and $\b_1\prec \b_2 \prec \cdots$. In addition, we may assume that for each $n\in \N$, $\bigcup_{B\in \b_n}\pi^{-1}(\partial B)$ has no interior. To see this, we need modify the proof of Lemma \ref{lem2}.

For each $y\in Y$ and $0< r<\d_n/3$, $\partial B(y,r)=\{z\in Y: \rho(y,z)=r\}$. Then there exists a $0< r_y<\d_n/3$ such that $\pi^{-1}(\partial B(y,r_y))$ has no interior. Otherwise $\{\pi^{-1}(\partial B(y,r))\}_{0<r<\d_n/3}$ is a family of uncountable disjoint closed subsets of $X$ and each element has non-trivial interior. This implies that $X$ is not a second countable space, which is a contradiction. Thus we have an open cover $\{B(y,r_y)\}_{y\in Y}$ of $Y$ such that $\pi^{-1}(\partial B(y,r_y))$ has no interior for each $y\in Y$ and $0<r_y<\d_n/3$. By compactness of $Y$ take a finite subcover $B_1,\ldots, B_k$ of $\{B(y,r_y)\}_{y\in Y}$.
Next we follow the proof of Lemma \ref{lem2}. Let $A_1=\overline{B_1}$, and for $n\ge 2$ let
$A_n=\overline{B_n}\setminus \big(B_1\cup \cdots \cup B_{n-1}\big).$
Like Lemma \ref{lem2}, we assume that ${\rm int}(A_i)\neq \emptyset$ for all $i\in \{1,2,\ldots, k\}$. Then $\b_n'=\{A_1,\ldots, A_k\}$ is a closed cover with ${\rm diam}(\b_n')<\d_n$.
It is admissible and $\bigcup_{j=1}^k\pi^{-1}(\partial A_k)\subseteq \bigcup_{j=1}^k\pi^{-1}(\partial B_k)$ has no interior. Next let $\b_n=\b_1'\vee\b_2'\vee\cdots \vee\b_n'$ for all $n\in \N$.
Then by Lemma \ref{lem3}, $\b_n$ is still admissible. Also we have ${\rm diam}(\b_n)\le {\rm diam}(\b_n')<\d_n$ and $\b_1\prec\b_2\prec\cdots$. And
$$\bigcup_{B\in \b_n}\pi^{-1}({\partial B})=\pi^{-1}\Big({\bigcup_{B\in \b_n}\partial B}\Big)\subseteq \pi^{-1}\Big(\bigcup_{i=1}^n\bigcup_{B'\in \b_i'}\partial \b'_i\Big)=\bigcup_{i=1}^n\bigcup_{B'\in \b_i'}\pi^{-1}\Big(\partial \b'_i\Big) $$
has no interior.

Hence we have that for each $n\in \N$, $\b_n$ is an admissible cover of $Y$ and $\bigcup_{B\in \b_n}\pi^{-1}(\partial B)$ has no interior. Let $\gamma_n=\{\pi^{-1}(B)\}_{B\in \b_n}$. We claim that $\gamma_n$ is admissible for all $n\in \N$. It is clear that $\gamma_n$ is a closed cover of $X$ as $\b_n$ is a closed cover.
Note that for each $B\in \b_n$,
$$\pi^{-1}(B)=\pi^{-1}({\rm int}(B))\cup\pi^{-1}(\partial (B)).$$
As $\partial B$ is a closed subset such that $\pi^{-1}(\partial B)$ has no interior points, we have that ${\rm int}(\pi^{-1}(B))=\pi^{-1}({\rm int}(B))$ and $\partial (\pi^{-1}(B))=\pi^{-1}(\partial B)$.
By Lemma \ref{lem1}, for any $B\neq B'\in \b_n$, ${\rm int}(B)\cap B'=\emptyset$.
Thus
$${\rm int}(\pi^{-1}(B))\cap \pi^{-1}(B')= \pi^{-1}({\rm int}(B))\cap \pi^{-1}(B')=\emptyset.$$
And $\bigcup_{B\in \b_n}\partial \pi^{-1}(B)=\bigcup_{B\in \b_n}\pi^{-1}(\partial B)$ has no interior.
By Remark \ref{rem-lem1}, $\gamma_n=\{\pi^{-1}(B)\}_{B\in \b_n}$ is an admissible cover of $X$.

By Lemma \ref{lem2}, choose an admissible cover $\gamma'_n$ of $X$ with ${\rm diam}(\gamma'_n)<\d_n$. Let $\a_1=\gamma_1\vee \gamma_1'$, and for $n\ge 2$, $\a_n=\gamma_n\vee \gamma'_n\vee \a_1\vee\cdots \vee \a_{n-1}$. Then by Lemma \ref{lem3}, $\a_n$ is an admissible cover with ${\rm diam}(\a_n)<\d_n$. Also by the construction of $\a_n$, we have that for each $n\in \N$, $\pi^{-1}(\b_n)\prec \a_n$, and $\a_1\prec\a_2\prec\cdots$. The proof of Claim is complete.
\end{proof}

Next we use Theorem \ref{thm-0-dim}. Let $(X_n,\sigma)$ is the symbolic representation of $(X,\a_n)$, and let $(Y_n,\sigma)$ is the symbolic representation of $(Y,\b_n)$. Let $(X', T')={\rm inv\ }{\lim_{i\to\infty}} (X_i, \sigma)$, and $(Y', S')={\rm inv\ }{\lim_{i\to\infty}} (Y_i, \sigma)$. Then by Theorem \ref{thm-0-dim}  $\tau: X'\rightarrow X$ and $\theta: Y'\rightarrow Y$ are almost one to one extensions.
By the Claim we have the following commutative diagram.
\begin{equation*}
	\xymatrix
	{X_1 \ar[d]_{\pi_1}             &
		X_2 \ar[l]_{\phi_{21}}
		\ar[d]_{\pi_2}
		 &
		X_3 \ar[l]_{\phi_{32}}
		\ar[d]_{\pi_3}   & \ar[l]
       \cdots
		& X_{n}\ar[l]
		\ar[d]_{\pi_{n}}
		 &
		X_{n+1}
		\ar[d]_{\pi_{n+1}}
		\ar[l]_{\phi_{n+1,n}} &  \ar[l]
		\cdots        & \ar[l]
		X'
		\ar[d]_{\pi'} \ar[r]^{\tau} & X  \ar[d]_{\pi}       \\
		Y_1                 &
		Y_2 \ar[l]^{\phi'_{21}}          &
		Y_3 \ar[l]^{\phi'_{32}} &  \ar[l]
		\cdots
		& Y_{n}  \ar[l]              &
		Y_{n+1}
		\ar[l]^{\phi'_{n+1,n}} &  \ar[l]
		\cdots         &\ar[l]
		Y' \ar[r]_{\theta} & Y
	}
	\end{equation*}

In the diagram, for each $n\in \N$, $\pi_n: X_n\rightarrow Y_n$ is induced by the fact that $\pi^{-1}(\b_n)\prec \a_n$. By Lemma \ref{lem6}, the family $\{\pi_n\}_n$ will define the continuous map
$$\pi': (X', T')={\rm inv\ }{\lim_{i\to\infty}} (X_i, \sigma)\rightarrow{\rm inv\ }{\lim_{i\to\infty}} (Y_i, \sigma)=(Y', S').$$
Since each $\pi_n$ is a factor map, it is easy to verify that $\pi'$ is also a factor map.

If in addition $(X,T)$ and $(T,T)$ are minimal, by Theorem \ref{thm-0-dim} $(X',T')$ and $(Y',S')$ are minimal.
The proof is complete.
\end{proof}



\begin{thebibliography}{SS}

\bibitem{Ak97} E. Akin, {Recurrence in topological dynamical systems: Furstenberg families and Ellis actions}, Univ. Ser. Math. Plenum Press, New York, 1997, x+265 pp.

\bibitem{Au88} J. Auslander, {Minimal flows and their extensions}, North-Holland Math. Stud., {\bf 153}, North-Holland Publishing Co., Amsterdam, 1988, xii+265 pp.

\bibitem{AF} J. Auslander and H. Furstenberg, \textit{Product recurrence and
distal points}, Trans. Amer. Math. Soc., \textbf{343} (1994),
221--232.

\bibitem{AuG77} J. Auslander and S. Glasner, \textit{Distal and highly proximal extensions of minimal flows}, Indiana Univ. Math. J.,  {\bf 26} (1977), 731--749.

\bibitem{Tim} T. Austin, \textit{Non-convergence of some non-commuting double ergodic averages}, arXiv:2407.08630.

\bibitem{BL96} V. Bergelson and A. Leibman, \textit{Polynomial extensions of van der Waerden's and
Szemer\'edi's theorems},  J. Amer. Math. Soc., {\bf 9} (1996), 725--753.

\bibitem{BL02} V. Bergelson and A. Leibman, \textit{A nilpotent Roth theorem}, Invent. Math., {\bf 147} (2002), 429-470.

\bibitem{D-Y} P. Dong, S. Donoso, A. Maass, S. Shao and X. Ye, \textit{Infinite-step nilsystems, independence andcomplexity}, Ergodic Theory Dynam. Systems, {\bf 33} (2013), 118--143.

\bibitem{DSY12} P. Dong, S. Shao and X. Ye, \textit{ Product recurrent properties, disjointness
and weak disjointness}, {Israel J. of Math.}, {\bf 188} (2012), 463--507.


\bibitem{Fort} M. K. Fort, Jr., \textit{Points of continuity of semi-continuous functions}, Publ. Math. Debrecen, {\bf 2} (1951), 100--102.


\bibitem{FranHost21} N. Frantzikinakis and B. Host, \textit{Multiple recurrence and convergence without commutativity}, J. Lond. Math. Soc. (2), {\bf 107} (2023), 1635--1659.

\bibitem{F81} H. Furstenberg, \textit{Poincar\'e recurrence and number theory}, Bull. Amer. Math. Soc., {\bf 5} (1981), 211--234.

\bibitem{F67} H. Furstenberg, \textit{Disjointness in ergodic theory, minimal sets, and a problem in Diophantine approximation}, Math. Systems Theory, {\bf 1} (1967), 1--49.

\bibitem{FKS73}H. Furstenberg, H. Keynes and L. Shapiro, \textit{Prime flows in topological dynamics}, Israel J. Math., {\bf 14} (1973), 26--38.

\bibitem{F77} H. Furstenberg, \textit{Ergodic behavior of diagonal measures and a theorem of Szemer\'edi on arithmetic progressions},  J. Anal. Math., {\bf 31} (1977), 204--256.

\bibitem{F} H. Furstenberg, {Recurrence in ergodic theory and combinatorial number theory}, Princeton University Press, Princeton, NJ, 1981, xi+203 pp.






\bibitem{FW} H. Furstenberg and B. Weiss, \textit{Topological dynamics and combinatorial number theory},  J. Anal. Math., {\bf 34} (1978), 61--85.




\bibitem{G} E. Glasner, Ergodic theory via joinings, Math. Surveys Monogr., {\bf 101},
American Mathematical Society, Providence, RI, 2003, xii+384 pp.




\bibitem{GHSWY} E. Glasner, W. Huang, S. Shao, B. Weiss and X. Ye, \textit{Topological characteristic factors and nilsystems}, arXiv:2006.12385, to appear in  J. Eur. Math. Soc.



\bibitem{GH} W. Gottschalk and G. Hedlund, {Topological dynamics}, Amer. Math. Soc. Colloq. Publ., Vol. {\bf 36}, American Mathematical Society, Providence, RI, 1955, vii+151 pp.

\bibitem{HO} K. Haddad and W. Ott, \textit{Recurrence in pairs}, Ergodic Theory Dynam. Systems, {\bf 28} (2008), 1135--1143.



\bibitem{HK18} B. Host and B. Kra, {Nilpotent Structures in Ergodic Theory},
Math. Surveys Monogr., {\bf 236}, American Mathematical Society, Providence, RI, 2018, X+427 pp.

\bibitem{HKM} B. Host, B. Kra and A. Maass, \textit{Nilsequences and a structure theory for topological dynamical systems},  Adv. Math., {\bf 224} (2010) 103--129.



\bibitem{HSY-19-1} W. Huang, S. Shao and X. Ye, \textit{Topological correspondence of multiple ergodic averages of nilpotent actions}, J. Anal. Math.,  {\bf 138} (2019), 687--715.

\bibitem{HSY20}  W. Huang, S. Shao and X. Ye, \textit{An answer to Furstenberg's problem on topological disjointness}, Ergodic Theory Dynam. Systems, {\bf 40} (2020), 2467--2481.


\bibitem{HSY-21} W. Huang, S. Shao and X. Ye, \textit{Multiply minimal points for the product of iterates}, Israel J. Math., {\bf 251} (2022), 541--565.

\bibitem{HSY22-3} W. Huang, S. Shao and X. Ye, \textit{A counterexample on polynomial multiple convergence without commutativity}, Bull. Soc. Math. France, {\bf 152}(2024), 149--168. 



\bibitem{HSY22-1}  W. Huang, S. Shao and X. Ye, \textit{Topological dynamical systems induced by polynomials and combinatorial consequences},  arXiv:2301.07873.

\bibitem{HSY-new} W. Huang, S. Shao and X. Ye, \textit{Polynomial Furstenberg joinings and its applications}, arXiv:2301.07881.

\bibitem{HSY2024} W. Huang, S. Shao and X. Ye, \textit{A counterexample on multiple convergence without commutativity}, arXiv:2407.10728



\bibitem{HY05} W. Huang and X. Ye, \textit{Dynamical systems disjoint from any minimal system}, {Trans. Amer. Math. Soc.}, {\bf 357} (2005), 669--694.


\bibitem{Kosloff} Z. Kosloff and S. Sanadhya, \textit{Multidimensional local limit theorem in deterministic systems and an application to non-convergence of polynomial multiple averages}, arXiv:2409.05087.


\bibitem{Kura1} K. Kuratowski, {Topology, Vol. I}, Academic Press, New York-London, 1966, xx+560 pp.

\bibitem{Kura2} K. Kuratowski, {Topology, Vol. II}, Academic Press, New York-London, 1968, xiv+608 pp.








\bibitem{Lei} A. Leibman, \textit{ Multiple recurrence theorem for nilpotent group actions}, Geom. Funct. Anal., 4 (1994), no. 6, 648--659.

\bibitem{LOU} Z. Lin and K. Ouyang, \textit{Minimal systems with all points are multiply minimal}, preprint, 2024.

\bibitem{SY} S. Shao and X. Ye, \textit{Regionally proximal relation of order $d$ is an equivalence one for minimal systems and a combinatorial consequence},  Adv. Math., {\bf 231} (2012), 1786--1817.

\bibitem{Qiu} J. Qiu, \textit{Polynomial orbits for totally minimal systems}, Adv. Math., {\bf 432} (2023), Paper No. 109260, 34 pp.


\bibitem{R} V. Ryzhikov, \textit{Non-commuting transformations with
non-converging 2-fold ergodic averages}, arXiv:2407.13741v2


\bibitem{V70} W. A. Veech, \textit{Point-distal flows}, Amer. J. Math., {\bf 92} (1970), 205--242.

\bibitem{V77} W. A. Veech, \textit{Topological systems}, Bull. Amer. Math. Soc., {\bf 83} (1977), 775--830.

\bibitem{Vr} J. de Vries, {Elements of Topological Dynamics}, Math. Appl., {\bf 257},
Kluwer Academic Publishers Group, Dordrecht, 1993, xvi+748 pp.





\end{thebibliography}
\end{document}